\documentclass[reqno]{amsart}

\usepackage{amssymb,color}
\usepackage{amsmath}
\usepackage{amsfonts}
\usepackage{fouridx}
\usepackage[normalem]{ulem}
\usepackage{soul}
\usepackage[pdftex]{hyperref}
\usepackage{relsize}
\usepackage[utf8]{inputenc}
\usepackage{mathrsfs}
\usepackage[english]{babel}
\usepackage[totoc]{idxlayout}
\usepackage{fancyhdr}
\usepackage{paralist}
\usepackage{xcolor}
\usepackage[mathscr]{euscript}
\setcounter{MaxMatrixCols}{10}

\usepackage{mathtools}

\usepackage{todonotes}

\allowdisplaybreaks[3]
\newcommand\delc[1]{}

\newcommand\comcd[1]{}

\newcommand\del[1]{}
\newcommand\deln[1]{}
\newcommand\delr[1]{}

\newcommand\comad[1]{}

\newcommand\Greendel[1]{}
\newcommand\old[1]{}

\numberwithin{equation}{section}

\def\old#1{}

\def\text#1{{\rm #1}}
\def\newold#1{}

\theoremstyle{plain}

\numberwithin{equation}{section}

\begin{document}

\title{Existence of Martingale Solutions to Stochastic Constrained Heat Equation}

\author{Javed  Hussain}
\address{Department of Mathematics\\
Sukkur IBA University\\
Sindh Pakistan}
\email{javed.brohi@iba-suk.edu.pk}

\author{Abdul Fatah}
\address{Department of Mathematics\\
Sukkur IBA University\\
Sindh Pakistan} 
\email{afatah.msmaths21@iba-suk.edu.pk}

\author{Saeed Ahmed}
\address{Department of Mathematics\\
Sukkur IBA University\\
Sindh Pakistan}
\email{saeed.msmaths21@iba-suk.edu.pk}

\begin{abstract}
This article extends the work on stochastic constrained heat equation in \cite{brzezniak2020global}. We will show the existence of Martingale solutions to the stochastic-constrained heat equations. The proof is based on compactness, tightness of measure, quadratic variations, and Martingale representation theorem. 
\end{abstract}
\keywords{Stochastic Constrained heat equation; Stochastic Evolution Equations; Constrained Energy; Martingale Solution}
\date{\today}
\maketitle



\newtheorem{theorem}{Theorem}[section]
\newtheorem{lemma}[theorem]{Lemma}
\newtheorem{proposition}[theorem]{Proposition}
\newtheorem{corollary}[theorem]{Corollary}
\newtheorem{question}[theorem]{Question}

\theoremstyle{definition}
\newtheorem{definition}[theorem]{Definition}
\newtheorem{algorithm}[theorem]{Algorithm}
\newtheorem{conclusion}[theorem]{Conclusion}
\newtheorem{problem}[theorem]{Problem}

\theoremstyle{remark}
\newtheorem{remark}[theorem]{Remark}
\numberwithin{equation}{section}

\section{Introduction}
\label{Introducing problem}
We are interested in martingale solutions to the following problem.

\begin{align}\label{SCHE}
    du&=\left[\Delta u +F(u)\right]dt+\sum_{j=1}^{N} \Lambda_{j}(u)\circ dW_{j} \\
    u(0)&=u_{0}
\end{align}

where $F(u)=||u||^{2}u+|u|_{L^{2n}}^{2n}-u^{2n-1}$ , $\Lambda_{j}(u)=\pi_j(u)=\varphi_{j}-\left  \langle \varphi_{j},u\right \rangle u$ and 
$\sum_{j=1}^{N} \Lambda_{j}(u)\circ dW_{j}$ is a Stratonovich type random forcing factor with $W_{j}$ being  independent $\mathbb{R}$-valued standard Brownian motions. In the above equation, noise in Stratonovich form is  gradient type  and is tangent to \\ $\mathcal{M}=\left\{u\in \mathcal{L}^2: |u|_{L^2}^2=1 \right\}$.\\
Interestingly, \eqref{SCHE} can also be written in It$\hat{o}$ form as below 
\begin{align} \label{main prob}
        du&=\left[\Delta u +F(u)+\frac{1}{2}\sum_{J=1}^{N} k_{j}(u) \right]dt+\sum_{j=1}^{N} \Lambda_{j}(u)dW_{j}, \\
    u(0)&=u_{0}.
\end{align}
with $ k_j(u)=d_{u}\Lambda_{j}(\Lambda_{j}(u))= -\left \langle \varphi_{j}, \Lambda_{j} (u) \right \rangle u-\left \langle \varphi_{j},u \right \rangle \Lambda_{j}(u) $.

\section{Important Function Spaces}
We will be working on the problem in the following function space settings. 

\begin{align}
    &\mathcal{H}=\mathcal{L}^{2}, \nonumber \\
    &\mathcal{V}=\mathcal{H}^1_{0}, \nonumber \\
&\mathcal{E}=D(\Delta)=D(A)=\mathcal{H}^1_{0}\cap \mathcal{H}^2. \nonumber
\end{align}
Indeed,
\begin{align}
    \mathcal{E} \hookrightarrow \mathcal{V} \hookrightarrow \mathcal{H}. \label{triplet}
\end{align}
\begin{itemize}
    \item Norm on $\mathcal{H}$ is represented by  $|\cdot|_{\mathcal{H}}\; \text{ or }\; |\cdot|$.
    \item Norm on $\mathcal{V}$ is represented by  $||\cdot||\text{ or }||\cdot||_{\mathcal{V}}$.
    \item Norm on $\mathcal{E}$ is represented by $|\cdot|_{\mathcal{E}} $.
\end{itemize}
 Hilbert manifold unit sphere in the space $\mathcal{H}$ can be described by 
 \begin{align*}
     \mathcal{M}:=\left\{u \in \mathcal{H}: |u|_{\mathcal{H}}^2=1 \right\}
 \end{align*}
 Additionally, we will need the following spaces:\\
 \begin{itemize}
     \item  $\mathcal{C}([0,T],\mathcal{H})$ with the norm $  \sup_{0\leq s \leq T} |u(t')|_{\mathcal{H}}, \text{ for } u \in \mathcal{C}([0,T],\mathcal{H})$.
     \item $L^2_{w}([0,T], D(A))$ denotes the space $\mathcal{L}^2([0,T],D(A))$ endowed\\ with the weak topology. 
     \item  $\mathcal{L}^2([0,T],\mathcal{V})$ with the norm $  ||u||_{ \mathcal{L}^2([0,T],\mathcal{V}) }= \left( \int_{0}^{T} ||u(t')||_{\mathcal{V}}^2 dt' \right)$.
     \item $\mathcal{C}([0,T],\mathcal{V}_{w})$ contains $\mathcal{V}_{w}$-valued continuous functions, in weak sense, on $[0,T]$. 
 \end{itemize}
 Throughout the discussion, we will denote our Laplace Operator by the usual notation $\Delta$ or $A$, i.e. $A(u)=\Delta(u)$.

\section{Important Definitions, lemmas and Theorems}
\begin{definition}
We say that there exists a martingale solution of \eqref{main prob} (SCHE) if there exist
\begin{enumerate}
    \item a stochastic basis
    $(\hat{\Omega},\hat{\mathcal{F}},\hat{F},\hat{P} )$
    \item an   $R^m$  valued  $\hat{F}$  Wiener process $\hat{W}$
    \item  and a  $\hat{F}--$ progressively measurable process $u : [0,T]\times\hat{\Omega}\rightarrow D(A)$ with $\hat{P}$ a.e.  paths $u(\cdot,w) \in \mathcal{C}([0,T],\mathcal{V}_{w})\cap \mathcal{L}^2([0,T],D(A))$
     such that for all  $t\in[0,T]$ and $v \in \mathcal{V}$  we have  
\end{enumerate}
\begin{align}
    \langle u(t), v \rangle- \langle u(0), v \rangle &=\int{ \langle \Delta u(s)+F(u(s)+\frac{1}{2}\sum_{j=1}^{N}\kappa _{j}(u(s)}, v \rangle ds+\nonumber \\ &\sum_{j=1}^{N}\int \langle B_{j}(u),v \rangle dW_{j}, 
\end{align} holds $P$ - a.s.  \cite{dhariwal2017study}
\end{definition}

\begin{definition}
    Suppose $v \in C ([0,T_1],\mathcal{S} )$ then we define \textbf{modulus of continuity of v} as 
    \begin{align}
        m(v,\epsilon)=\sup_{s_1,s_2 \in [0,T_1],\; |s_1-s_2|\leq \epsilon} \mathcal{\rho}(v(s_1),v(s_2)) \nonumber
    \end{align}
\end{definition}

\begin{theorem}\cite{dhariwal2017study}
    The sequence $Y_k$of random variables with values in $\mathcal{S}$satisfies \textbf{[T]} if and only if 
    \begin{align}
        \forall \epsilon_1 > 0,\epsilon_2 \; >0 \exists \delta > 0: \;\; \sup_{k \in \mathbb{N}}\mathbb{P}\left\{m(Y_k,\delta)>\epsilon_2\right\}\leq \epsilon_1  \nonumber
    \end{align}
\end{theorem}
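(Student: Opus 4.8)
The plan is to reduce this characterization to two classical tools: Prokhorov's theorem, which identifies the tightness condition \textbf{[T]} for the laws of $\{Y_k\}$ with the requirement that for every $\eta>0$ there exist a compact set $K_\eta\subset C([0,T_1],\mathcal{S})$ with $\sup_{k}\mathbb{P}(Y_k\notin K_\eta)\le\eta$; and the Arzel\`a--Ascoli theorem, which characterizes the relatively compact subsets $K$ of $C([0,T_1],\mathcal{S})$ as exactly those that are (i) pointwise relatively compact, i.e. $\{v(t):v\in K\}$ is relatively compact in $\mathcal{S}$ for each $t\in[0,T_1]$, and (ii) uniformly equicontinuous, i.e. $\lim_{\delta\to0}\sup_{v\in K}m(v,\delta)=0$. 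The asserted equivalence then follows by transporting this deterministic compactness criterion through the probabilistic layer, one implication at a time.

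For the necessity direction (\textbf{[T]} $\Rightarrow$ the modulus condition) I would fix $\epsilon_1,\epsilon_2>0$ and invoke tightness to produce a compact set $K$ with $\sup_{k}\mathbb{P}(Y_k\notin K)\le\epsilon_1$. The uniform equicontinuity (ii) of $K$ then furnishes a $\delta>0$ with $\sup_{v\in K}m(v,\delta)\le\epsilon_2$, so that the event $\{m(Y_k,\delta)>\epsilon_2\}$ forces $Y_k$ out of $K$; that is, $\{m(Y_k,\delta)>\epsilon_2\}\subseteq\{Y_k\notin K\}$, and the desired probability bound is immediate and uniform in $k$.

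For the sufficiency direction (the modulus condition $\Rightarrow$ \textbf{[T]}), which is where the real work lies, I would fix $\eta>0$ and apply the hypothesis with $\epsilon_1=\eta\,2^{-n}$ and $\epsilon_2=1/n$ to extract, for each $n\in\mathbb{N}$, a radius $\delta_n>0$ with $\sup_{k}\mathbb{P}(m(Y_k,\delta_n)>1/n)\le\eta\,2^{-n}$. I would then take $K_\eta$ to be the closure of the set of all $v$ satisfying $m(v,\delta_n)\le1/n$ for every $n$, intersected with the pointwise compact-containment constraint; by construction this set is uniformly equicontinuous, Arzel\`a--Ascoli renders it relatively compact, and a single union bound yields $\sup_{k}\mathbb{P}(Y_k\notin K_\eta)\le\sum_{n}\eta\,2^{-n}=\eta$, which is exactly \textbf{[T]}.

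The principal obstacle is the pointwise relative compactness (i) demanded by Arzel\`a--Ascoli: equicontinuity alone does not yield compactness in $C([0,T_1],\mathcal{S})$ unless the one-dimensional marginals $\{Y_k(t)\}$ are themselves tight in $\mathcal{S}$. I would therefore either read tightness of the marginals into condition \textbf{[T]} as a standing requirement, or, in the case where $\mathcal{S}$ is compact, observe that (i) holds automatically. The delicate technical point is to splice together the countably many equicontinuity constraints with this marginal tightness into a \emph{single} compact set of probability at least $1-\eta$, uniformly in $k$, via the diagonal and Borel--Cantelli bookkeeping indicated above.
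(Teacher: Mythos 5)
The paper itself offers no proof of this statement --- it is quoted from \cite{dhariwal2017study} as a black box --- so there is nothing internal to compare your argument against. On its merits, your route (Prokhorov plus Arzel\`a--Ascoli) is the standard and correct way to prove such a characterization, and your necessity direction is complete: compactness of $K$ gives uniform equicontinuity, hence a $\delta$ with $\sup_{v\in K}m(v,\delta)\le\epsilon_2$, and the inclusion $\{m(Y_k,\delta)>\epsilon_2\}\subseteq\{Y_k\notin K\}$ finishes it. In the sufficiency direction your construction of $K_\eta$ as $\bigcap_n\{v: m(v,\delta_n)\le 1/n\}$ (already closed, since $m(\cdot,\delta)$ is $2$-Lipschitz for the sup metric, so the outer closure is harmless) with the union bound $\sum_n\eta 2^{-n}=\eta$ is also the right mechanism; the ``Borel--Cantelli bookkeeping'' you invoke is really just this union bound.

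The obstacle you flag, however, is not merely a technical loose end in your write-up --- it is a genuine gap in the statement as transcribed. If \textbf{[T]} means tightness of the laws on $C([0,T_1],\mathcal{S})$, the claimed equivalence is false for a general Polish $\mathcal{S}$: take $Y_k\equiv x_k$ constant in time with $(x_k)$ having no convergent subsequence in a non-locally-compact $\mathcal{S}$; then $m(Y_k,\delta)=0$ for all $\delta$, so the modulus condition holds vacuously, yet the laws are not tight. The equivalence requires, in addition, uniform tightness of the time marginals (equivalently, by equicontinuity, tightness at a single time or at a countable dense set of times), exactly the Arzel\`a--Ascoli condition (i) you identify. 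In the cited source this tension does not arise because condition \textbf{[T]} is there \emph{defined} to be the modulus-of-continuity condition (making the present ``theorem'' a restatement of a definition), and the compact-containment ingredient is supplied separately by the a priori estimates of the type in Theorem \ref{Important Estimates} when tightness on $\mathcal{Y}_T$ is actually proved. So your proof is the correct proof of the correct statement; to close it you should either add the marginal-tightness hypothesis explicitly or note that \textbf{[T]} is being used as a name for the displayed condition rather than for tightness.
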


\begin{lemma}
    Let $Y_k$ satisfies \textbf{[T]}. Assume $\mathbb{P}_k$ be the distribution/law of $Y_k$ on $C ([0,T],\mathcal{S})$, $k \in \mathbb{N}$. Then 
    \begin{align}
        \forall \epsilon' >0 , \exists A_{\epsilon'} \subset \mathcal{C} ([0,T],\mathcal{S}) : \; \; \sup_{k \in \mathbb{N}} \mathbb{P}_k(A_{\epsilon'}) + \epsilon' \geq 1 \text{ and } \lim_{\delta \rightarrow 0} \sup_{v \in A_{\epsilon}}(m(v,\delta))=0. \nonumber
    \end{align}
\end{lemma}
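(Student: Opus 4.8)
The plan is to derive this lemma directly from the tightness criterion \textbf{[T]} stated in the preceding theorem, essentially realizing an Arzel\`a--Ascoli/Prokhorov type argument at the level of modulus of continuity. First I would fix $\epsilon' > 0$ and, for each $n \in \mathbb{N}$, apply \textbf{[T]} with the choices $\epsilon_1 = \epsilon' 2^{-n}$ and $\epsilon_2 = 1/n$. This yields, for every $n$, a number $\delta_n > 0$ such that
\begin{align}
    \sup_{k \in \mathbb{N}} \mathbb{P}\left\{ m(Y_k,\delta_n) > \tfrac{1}{n} \right\} \leq \frac{\epsilon'}{2^n}. \nonumber
\end{align}
The idea is that these estimates, summed over $n$, control the "bad" event on which the modulus of continuity fails to be small, uniformly in $k$.

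Next I would define the candidate set as the sublevel set of the modulus of continuity that survives all these thresholds, namely
\begin{align}
    A_{\epsilon'} := \bigcap_{n=1}^{\infty} \left\{ v \in \mathcal{C}([0,T],\mathcal{S}) : m(v,\delta_n) \leq \tfrac{1}{n} \right\}. \nonumber
\end{align}
By construction, the complement is contained in $\bigcup_n \{ v : m(v,\delta_n) > 1/n\}$, so for each fixed $k$ the law $\mathbb{P}_k$ assigns the complement of $A_{\epsilon'}$ a mass of at most $\sum_n \epsilon' 2^{-n} = \epsilon'$, using countable subadditivity and the fact that $\mathbb{P}_k$ is the pushforward of $\mathbb{P}$ under $Y_k$. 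Taking the supremum over $k$ then gives $\sup_k \mathbb{P}_k(\mathcal{C}([0,T],\mathcal{S}) \setminus A_{\epsilon'}) \leq \epsilon'$, which rearranges to the claimed inequality $\sup_k \mathbb{P}_k(A_{\epsilon'}) + \epsilon' \geq 1$.

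It remains to verify the equicontinuity conclusion $\lim_{\delta \to 0} \sup_{v \in A_{\epsilon'}} m(v,\delta) = 0$. Here I would use monotonicity of $m(v,\cdot)$ in its second argument: given any target $\eta > 0$, pick $n$ with $1/n \leq \eta$; then for every $v \in A_{\epsilon'}$ and every $\delta \leq \delta_n$ we have $m(v,\delta) \leq m(v,\delta_n) \leq 1/n \leq \eta$, whence the supremum over $v \in A_{\epsilon'}$ is at most $\eta$ for all small $\delta$. The main subtlety to handle carefully is ensuring that $m(v,\cdot)$ is indeed nondecreasing, which follows immediately from the definition since enlarging $\epsilon$ enlarges the admissible set of pairs $(s_1,s_2)$ over which the supremum is taken; this is the only structural property of the modulus actually needed, and it is exactly what makes the nested intersection defining $A_{\epsilon'}$ yield uniform equicontinuity rather than merely pointwise control.
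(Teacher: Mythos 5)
Your argument is correct and is the standard one: apply condition \textbf{[T]} with $\epsilon_2=1/n$ and $\epsilon_1=\epsilon'2^{-n}$, intersect the resulting sublevel sets of the modulus of continuity, control the complement by countable subadditivity, and use monotonicity of $m(v,\cdot)$ for the uniform equicontinuity claim. The paper itself states this lemma without any proof (it is imported from the cited thesis of Dhariwal), so there is nothing to compare against; note only that your construction in fact yields the stronger conclusion $\inf_{k}\mathbb{P}_k(A_{\epsilon'})\geq 1-\epsilon'$, of which the paper's $\sup_{k}$ formulation is an immediate (and weaker) consequence.
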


\begin{definition}(\textbf{Aldous Condition}) \cite{dhariwal2017study}
     The sequence $(Y_k)_{k \in \mathbb{N}}$satisfies Aldous condition \textbf{[A]} if and only if $\forall \epsilon_1>0, \epsilon_{2}>0 ,\exists \delta' > 0$ in such a way that for every sequence $s_k$of stopping times with $s_{k} \leq T $ we have :
    \begin{align}
        \sup_{k \in \mathbb{N}} \sup_{0\leq a \leq \delta  } \mathbb{P}(\mathcal{\rho}(Y_{k}(s_k+a),Y_{k}(s_k))\geq \epsilon_2)\leq \epsilon_{1} \nonumber
    \end{align}
\end{definition}

\begin{proposition}
    \textbf{[T]} and \textbf{[A]} are equivalent.
\end{proposition}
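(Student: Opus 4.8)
The plan is to prove the two implications $\textbf{[T]}\Rightarrow\textbf{[A]}$ and $\textbf{[A]}\Rightarrow\textbf{[T]}$ separately. The first is an essentially pathwise observation, whereas the second requires a partition of $[0,T]$ together with a first-passage stopping-time construction, and this is where the real work lies.

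For $\textbf{[T]}\Rightarrow\textbf{[A]}$ I would fix $\epsilon_1,\epsilon_2>0$ and apply $\textbf{[T]}$ at the threshold $\epsilon_2/2$ to obtain a radius $\delta>0$ with $\sup_k\mathbb{P}(m(Y_k,\delta)>\epsilon_2/2)\leq\epsilon_1$. For an arbitrary sequence $(s_k)$ of stopping times bounded by $T$ and any deterministic $a\in[0,\delta]$ we have $|(s_k+a)-s_k|=a\leq\delta$, so by the very definition of the modulus of continuity,
$$\rho(Y_k(s_k+a),Y_k(s_k))\leq m(Y_k,\delta)\qquad\text{pathwise.}$$
Hence $\{\rho(Y_k(s_k+a),Y_k(s_k))\geq\epsilon_2\}\subseteq\{m(Y_k,\delta)>\epsilon_2/2\}$, and taking the supremum over $a\in[0,\delta]$ and over $k$ yields $\textbf{[A]}$ with $\delta'=\delta$.

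The substantive direction is $\textbf{[A]}\Rightarrow\textbf{[T]}$. Fix $\epsilon_1,\epsilon_2>0$. The first step is a deterministic reduction: for a mesh $\delta>0$ introduce the grid $t_i=i\delta$, $i=0,\dots,n$ with $n=\lceil T/\delta\rceil$, and set $\omega_i^k=\sup_{t_i\leq s\leq t_{i+1}}\rho(Y_k(s),Y_k(t_i))$. A triangle-inequality argument shows that any pair $s_1,s_2$ with $|s_1-s_2|\leq\delta$ lies in at most two adjacent subintervals, so that $m(Y_k,\delta)\leq 3\max_{0\leq i\leq n-1}\omega_i^k$, and therefore
$$\mathbb{P}\bigl(m(Y_k,\delta)>\epsilon_2\bigr)\leq\sum_{i=0}^{n-1}\mathbb{P}\bigl(\omega_i^k>\epsilon_2/3\bigr).$$
The second step is to control each summand through the stopping time $\sigma_i^k=\inf\{s\geq t_i:\rho(Y_k(s),Y_k(t_i))>\epsilon_2/3\}\wedge t_{i+1}$, which is well defined and a genuine stopping time because the paths of $Y_k$ are continuous; on $\{\omega_i^k>\epsilon_2/3\}$ one has $\sigma_i^k<t_{i+1}$ and, by continuity, $\rho(Y_k(\sigma_i^k),Y_k(t_i))\geq\epsilon_2/3$ with $\sigma_i^k-t_i\leq\delta$. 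I would then invoke $\textbf{[A]}$ at the stopping times $t_i$ and $\sigma_i^k$ (both bounded by $T$) to bound each $\mathbb{P}(\omega_i^k>\epsilon_2/3)$ by a quantity made as small as $\epsilon_1/n$; since $n$ is fixed once $\delta$ is, choosing $\delta$ via $\textbf{[A]}$ for threshold $\epsilon_2/3$ and tolerance $\epsilon_1/n$ would close the argument.

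I expect the main obstacle to be exactly the passage from control of increments at a single stopping time with a \emph{deterministic} lag $a$ to control of the genuine oscillation $\omega_i^k$ over a subinterval: the first-passage time $\sigma_i^k$ produces a \emph{random} lag $\sigma_i^k-t_i\in[0,\delta]$ rather than a deterministic one, so $\textbf{[A]}$ in the stated form does not apply verbatim. Resolving this requires upgrading $\textbf{[A]}$ to its two-stopping-time version, namely that $\mathbb{P}(\rho(Y_k(\tau),Y_k(\sigma))\geq\epsilon)$ is small whenever $\sigma\leq\tau\leq(\sigma+\delta)\wedge T$ are stopping times, which one derives from the deterministic-lag form by a maximal-inequality/approximation argument, or alternatively by discretizing the random lag over a fine sub-grid and summing. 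Care must also be taken with the strict versus non-strict inequalities at the first-passage time and with the apparent circular dependence between the choice of $\delta$ and the number of intervals $n$; the latter is harmless, since $n$ depends only on $\delta$ and $T$.
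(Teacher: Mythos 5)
First, note that the paper itself offers no proof of this proposition: it is stated as a known result (Aldous; Joffe--M\'etivier; see the thesis cited as \cite{dhariwal2017study}), so there is no in-paper argument to compare against and your attempt must be judged on its own. Your direction $\textbf{[T]}\Rightarrow\textbf{[A]}$ is correct and complete (up to the harmless convention that $Y_k(s_k+a)$ means $Y_k((s_k+a)\wedge T)$): a deterministic lag $a\le\delta$ is pathwise dominated by the modulus of continuity, and the inclusion of events does the rest.

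The direction $\textbf{[A]}\Rightarrow\textbf{[T]}$ has two genuine gaps. (a) The upgrade from the deterministic-lag form of $\textbf{[A]}$ to the two-stopping-time form $\mathbb{P}(\rho(Y_k(\tau),Y_k(\sigma))\ge\epsilon)\le\eta$ for stopping times $\sigma\le\tau\le\sigma+\delta$ is the heart of Aldous's theorem, and you only name it as an obstacle; the actual argument is an averaging/Fubini trick: on $\{\rho(Y(\tau),Y(\sigma))\ge 2\epsilon\}$ one has, for every $u\in[\delta,2\delta]$, either $\rho(Y(\sigma+u),Y(\sigma))\ge\epsilon$ or $\rho(Y(\sigma+u),Y(\tau))\ge\epsilon$; integrating $u$ over $[\delta,2\delta]$, the first family of events is controlled directly by $\textbf{[A]}$ (deterministic lag from $\sigma$), and the second, after the substitution $s=\sigma+u\in[\tau,\tau+2\delta]$ and Fubini, reduces again to deterministic lags measured from $\tau$. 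Neither ``discretizing the random lag'' nor a generic maximal inequality replaces this step. (b) More seriously, the circularity you dismiss as harmless is fatal to the deterministic-grid strategy: your union bound gives $\mathbb{P}(m(Y_k,\delta)>\epsilon_2)\le\lceil T/\delta\rceil\,\eta^*(\delta)$, where $\eta^*(\delta)$ is the best two-stopping-time bound available at scale $\delta$; condition $\textbf{[A]}$ guarantees only $\eta^*(\delta)\to 0$ with no rate, so $\lceil T/\delta\rceil\,\eta^*(\delta)$ need not become small for any choice of $\delta$ (consider $\eta^*(\delta)\sim 1/\log(1/\delta)$). The standard proof replaces the deterministic grid by the random partition $\tau_0=0$, $\tau_{j+1}=\inf\{t>\tau_j:\rho(Y_k(t),Y_k(\tau_j))\ge\epsilon_2/4\}$: one first bounds the number $q$ of such times in $[0,T]$ using the two-stopping-time estimate at a \emph{fixed} tolerance (say $1/2$), which makes $q$ independent of the final $\delta$, and only then chooses $\delta$ for tolerance $\epsilon_1/q$. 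Your first-passage times $\sigma_i^k$ are the right tool, but they must generate the partition rather than sit inside a mesh-$\delta$ grid.
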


\begin{lemma}
    The space consisting of the continuous and square-integrable processes, which are martingales and have values in $\mathcal{H}$, is represented by $\mathbb{M}_{T}^{2}$. Moreover, it can be shown that $\mathbb{M}_{T}^{2}$ is a Banach space w.r.t following norm, 
    \begin{align*}
        \left \vert \right \vert g \left \vert \right \vert _{\mathbb{M}_{T}^{2}}=\sqrt{ \mathbb{E} \left(\sup_{ s\in [0,T]} ||g(s)||^{2}_{\mathcal{H}}\right)}.
    \end{align*}
\end{lemma}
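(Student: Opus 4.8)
The plan is to establish first that $\|\cdot\|_{\mathbb{M}_{T}^{2}}$ is a genuine norm on the vector space $\mathbb{M}_{T}^{2}$, and then to prove completeness by the standard subsequence argument. Writing $M(g)=\sup_{s\in[0,T]}\|g(s)\|_{\mathcal{H}}$, one has the pointwise bound $M(g+h)\le M(g)+M(h)$ on $\Omega$, so that monotonicity of the $L^{2}(\Omega)$-norm together with Minkowski's inequality yields the triangle inequality $\|g+h\|_{\mathbb{M}_{T}^{2}}\le\|g\|_{\mathbb{M}_{T}^{2}}+\|h\|_{\mathbb{M}_{T}^{2}}$; homogeneity is immediate, and definiteness follows because $\|g\|_{\mathbb{M}_{T}^{2}}=0$ forces $M(g)=0$ almost surely, hence $g\equiv 0$ by path-continuity. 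Finiteness of the norm on $\mathbb{M}_{T}^{2}$ is ensured by Doob's maximal inequality, which bounds $\mathbb{E}[M(g)^{2}]$ by a constant multiple of $\mathbb{E}\|g(T)\|_{\mathcal{H}}^{2}$.

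The heart of the argument is completeness. Given a Cauchy sequence $(g_{n})$ in $\mathbb{M}_{T}^{2}$, I would extract a subsequence $(g_{n_{k}})$ with $\|g_{n_{k+1}}-g_{n_{k}}\|_{\mathbb{M}_{T}^{2}}\le 2^{-k}$. Using the Cauchy--Schwarz inequality and monotone convergence, $\mathbb{E}\big[\sum_{k}M(g_{n_{k+1}}-g_{n_{k}})\big]\le\sum_{k}\|g_{n_{k+1}}-g_{n_{k}}\|_{\mathbb{M}_{T}^{2}}<\infty$, so $\sum_{k}M(g_{n_{k+1}}-g_{n_{k}})<\infty$ almost surely. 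On the full-measure event where this sum is finite, the telescoping series $g_{n_{1}}(s)+\sum_{k}(g_{n_{k+1}}(s)-g_{n_{k}}(s))$ converges uniformly in $s\in[0,T]$; denote its limit by $g(s)$. Being the uniform limit of continuous $\mathcal{H}$-valued paths, $g$ has continuous paths.

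It then remains to identify $g$ as an element of $\mathbb{M}_{T}^{2}$ to which the whole sequence converges. Norm convergence of the subsequence follows from Fatou's lemma: $\|g-g_{n_{k}}\|_{\mathbb{M}_{T}^{2}}^{2}\le\liminf_{j}\|g_{n_{j}}-g_{n_{k}}\|_{\mathbb{M}_{T}^{2}}^{2}$, which is small for large $k$, and combined with the Cauchy property this gives $g_{n}\to g$ in $\mathbb{M}_{T}^{2}$. Square-integrability of $g$ is then automatic since $\|g\|_{\mathbb{M}_{T}^{2}}\le\|g-g_{n_{k}}\|_{\mathbb{M}_{T}^{2}}+\|g_{n_{k}}\|_{\mathbb{M}_{T}^{2}}<\infty$. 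To check that $g$ is a martingale, observe that $\mathbb{E}\|g(t)-g_{n_{k}}(t)\|_{\mathcal{H}}^{2}\le\|g-g_{n_{k}}\|_{\mathbb{M}_{T}^{2}}^{2}\to 0$, so $g_{n_{k}}(t)\to g(t)$ in $L^{2}(\Omega;\mathcal{H})$ for each $t$; passing to the limit in $\mathbb{E}[g_{n_{k}}(t)\mid\mathcal{F}_{s}]=g_{n_{k}}(s)$ via the $L^{2}$-continuity of conditional expectation gives $\mathbb{E}[g(t)\mid\mathcal{F}_{s}]=g(s)$ for $s\le t$. Hence $g$ is a continuous square-integrable $\mathcal{H}$-valued martingale, i.e. $g\in\mathbb{M}_{T}^{2}$.

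I expect the main obstacle to be the passage from the almost-sure uniform convergence of the subsequence to the three properties the limit must inherit simultaneously, namely continuity of paths, square-integrability, and the martingale identity, and in particular the justification of interchanging the limit with the conditional expectation. The subsequence-with-summable-increments device is what makes all three manageable at once, since it converts $\mathbb{M}_{T}^{2}$-convergence into genuine uniform convergence of paths on a set of full measure.
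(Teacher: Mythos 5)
The paper states this lemma without any proof at all --- it appears in the preliminaries as a quoted standard fact (essentially from Da Prato--Zabczyk and Dhariwal's thesis), so there is no argument of the paper's to compare yours against. Your proof is the standard one and is correct: the subsequence-with-summable-increments device, almost sure uniform convergence of the telescoping series, Fatou's lemma to recover norm convergence, and the $L^{2}$-contractivity of conditional expectation to pass the martingale identity to the limit are exactly the right ingredients, and you correctly identify the interchange of limit and conditional expectation as the point that needs the $L^{2}$ convergence at each fixed $t$. Two small points worth making explicit if this were written out in full: definiteness of the norm only holds after identifying indistinguishable processes (which is the usual convention for $\mathbb{M}_{T}^{2}$), and the limit process $g$ must be checked to be adapted, which follows since each $g(t)$ is an almost sure (or $L^{2}$) limit of the $\mathcal{F}_{t}$-measurable random variables $g_{n_{k}}(t)$ under the usual completeness assumptions on the filtration. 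Neither affects the validity of your argument.
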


\subsection{Martingale Representation Theorem}
\begin{theorem}\label{Martingale Repr THeorem}
   Consider Hilbert space $\mathcal{H}$ and $\mathcal{M} \in \mathcal{M}^{2}_{T}(\mathcal{H})$ with 
   \begin{align}
       \left\langle \mathcal{M} \right\rangle_{s}=\int_{0}^{s} \left(f(u)\mathbb{Q}^{\frac{1}{2}}\right)\cdot \left(f(u)\mathbb{Q}^{\frac{1}{2}}\right)^{*} du \nonumber
   \end{align}
   where $f(s)$ is a predictable process. \\ $\mathcal{H}_{0}=\mathbb{Q}^{\frac{1}{2}}\mathcal{H}$ is Hilbert space with following inner product
   \begin{align}
       \left\langle h, k \right\rangle_{\mathcal{H}_{0}}:= \left\langle \mathbb{Q}^{-\frac{1}{2}}h , \mathbb{Q}^{-\frac{1}{2}} k \right\rangle_{\mathcal{H}}, \;\; h,k \in \mathcal{H}_{0} \nonumber.
   \end{align}
And $\mathbb{Q}$ is a symmetric, bounded and non-negative operator in $\mathcal{H}$.
Then one can find the probability space $(\hat{\Omega},\hat{\mathcal{F}},\hat{\mathbb{P}})$, a filtration $\mathcal{F}_{s}$ and a $\mathcal{H}-$ valued $\mathbb{Q}-$ Wiener process $W$ defined on $\left(\Omega \times \hat{\Omega}, \mathcal{F} \times \hat{\mathcal{F}}, \mathbb{P} \times \hat{\mathbb{P}}\right)$ adapted to $\mathcal{F} \times \hat{\mathcal{F}}$ such that for all $(\omega,\hat{\omega}) \in \Omega \times \hat{\Omega}$,
\begin{align}
    \mathcal{M}(s,\omega,\hat{\omega})=\int_{0}^{s} f(u,\omega,\hat{\omega})d\omega(u,\omega,\hat{\omega}), \; s\in [0,T] ,
\end{align}
with
\begin{align}
    \mathcal{M}(s,\omega,\hat{\omega})=\mathcal{M}(s,\omega), \text{ and }\; f(s,\omega,\hat{\omega})=f(s,\omega).
\end{align}
\end{theorem}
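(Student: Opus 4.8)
The plan is to prove the statement by the classical martingale representation argument for Hilbert--space valued continuous square--integrable martingales, in the spirit of Da Prato--Zabczyk: one realises $\mathcal{M}$ as a stochastic integral after adjoining, on an enlarged stochastic basis, precisely the randomness that $\mathcal{M}$ itself does not record. Throughout I would write $\Phi(u) := f(u)\,\mathbb{Q}^{1/2}$, so that the hypothesis on the quadratic variation becomes $\langle \mathcal{M} \rangle_s = \int_0^s \Phi(u)\Phi(u)^{*}\, du$. The goal is then to construct a $\mathbb{Q}$--Wiener process $W$ for which $\mathcal{M}(s) = \int_0^s f(u)\, dW(u)$ holds for every $s \in [0,T]$.

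First I would enlarge the basis. On an auxiliary space $(\hat{\Omega},\hat{\mathcal{F}},\hat{\mathbb{P}})$ carrying a cylindrical Wiener process $\tilde{\beta}$ on $\mathcal{H}$ independent of everything defined over $\Omega$, form the product $(\Omega\times\hat{\Omega},\,\mathcal{F}\times\hat{\mathcal{F}},\,\mathbb{P}\times\hat{\mathbb{P}})$ with the product filtration; since $\mathcal{M}$ and $f$ do not depend on $\hat{\omega}$, they retain their values there, which is the compatibility recorded in the statement. I would then define the driving noise through the Moore--Penrose pseudo--inverse: with $P_u := \Phi(u)^{+}\Phi(u)$ the orthogonal projection onto $(\ker \Phi(u))^{\perp}$, put
\[
  \beta(s) := \int_0^s \Phi(u)^{+}\, d\mathcal{M}(u) + \int_0^s \bigl(I - P_u\bigr)\, d\tilde{\beta}(u), \qquad W(s) := \mathbb{Q}^{1/2}\beta(s).
\]
Here the first integral reconstructs the noise recorded by $\mathcal{M}$, and the second fills the orthogonal complement from the independent $\tilde{\beta}$.

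Next I would identify $W$ and verify the representation. Using the independence of the two stochastic integrals together with the Moore--Penrose identities $\Phi(u)^{+}\Phi(u)\Phi(u)^{*}(\Phi(u)^{+})^{*} = P_u$ and $(I-P_u)(I-P_u)^{*} = I - P_u$, a short computation yields $\langle \beta \rangle_s = \int_0^s I\, du = sI$, so by L\'evy's characterization in Hilbert spaces the continuous martingale $\beta$ is a cylindrical Wiener process; hence $W = \mathbb{Q}^{1/2}\beta$ is a $\mathbb{Q}$--Wiener process with $\langle W \rangle_s = s\,\mathbb{Q}$. For the representation, $\int_0^s f(u)\, dW(u) = \int_0^s \Phi(u)\, d\beta(u)$, and substituting $\beta$ the $\tilde{\beta}$--term drops out because $\Phi(u)(I - P_u) = \Phi(u) - \Phi(u)\Phi(u)^{+}\Phi(u) = 0$, while $\Phi(u)\Phi(u)^{+}$ is the orthogonal projection onto $\overline{\mathrm{Range}\,\Phi(u)}$ and acts as the identity on $d\mathcal{M}$, since $\tfrac{d}{du}\langle \mathcal{M}\rangle_u = \Phi(u)\Phi(u)^{*}$ forces the increments of $\mathcal{M}$ into $\overline{\mathrm{Range}\,\Phi(u)}$ for almost every $u$. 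Thus $\int_0^s \Phi(u)\, d\beta(u) = \int_0^s \Phi(u)\Phi(u)^{+}\, d\mathcal{M}(u) = \mathcal{M}(s)$.

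The hard part will be the functional--analytic bookkeeping around the pseudo--inverse: showing that $u \mapsto \Phi(u)^{+}$ and $u \mapsto P_u$ are predictable, controlling the integrability of $\Phi(u)^{+}$ against $d\mathcal{M}$ when $\Phi(u)$ is not boundedly invertible (so that $\Phi(u)^{+}$ may be unbounded and must be estimated relative to the quadratic--variation measure), and confirming that the Moore--Penrose identities above hold $d\mathbb{P}\otimes du$--almost everywhere. Once these measurability and integrability points are settled, the quadratic--variation computation and L\'evy's characterization render the remaining steps routine.
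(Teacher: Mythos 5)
The paper does not prove this theorem at all: it is quoted as a known tool (the classical martingale representation theorem of Da Prato--Zabczyk, also reproduced in Dhariwal's thesis) and is only invoked later in the proof of Theorem \ref{Martingale theorem}, so there is no in-paper argument to compare yours against. Your sketch is the standard proof from that reference --- enlargement of the stochastic basis by an independent cylindrical Wiener process, reconstruction of the noise via the Moore--Penrose pseudo-inverse of $\Phi(u)=f(u)\mathbb{Q}^{1/2}$, L\'evy's characterization to identify $\beta$, and cancellation of the auxiliary noise through $\Phi(u)(I-P_u)=0$ --- and the algebra you give ($\Phi^{+}\Phi\Phi^{*}(\Phi^{+})^{*}=P_u$, increments of $\mathcal{M}$ supported in $\overline{\mathrm{Range}\,\Phi(u)}$) is correct. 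The points you defer (predictability of $u\mapsto\Phi(u)^{+}$ and the integrability of the possibly unbounded pseudo-inverse against $d\mathcal{M}$) are exactly the technical content of the textbook proof, so the proposal is sound as an outline but would need those filled in to be self-contained.
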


\section{Faedo Galerkin Approximation} 
Let $ \left\{e_{k}\right\}_{k=1}^{\infty} $ be eigenvectors of 
$\Delta $ and also form orthonormal basis for the space $ \mathcal{H} $, then the finite dimensional subspace of $ \mathcal{H} $ can be defined as   
$ F_n:= \mathrm{span}\left\{e_1,..., e_n\right\} $ and a map $ Q_{n} : \mathcal{H} \rightarrow F_{n} $ by 
\begin{align*}
Q_{n}(u)=\sum_{k=1}^{n} \left \langle u , e_{k} \right \rangle_{\mathcal{H}} e_{k} 
\end{align*}.\\
In this way, the Faedo Galerkin Approximation of \eqref{SCHE} is given by 

\begin{align}\label{FAedo Galerkin befor}
    \begin{cases}
    du_{n}&=\left[Q_{n}\Delta(u_{n})+Q_{n}(F(u_{n}))\right]dt+\sum_{j=1}^{N} Q_{n}(\Lambda_{j}(u_{n}))\circ dW_{j}. \\ 
    u_{n}(0)&=\frac{Q_{n}(u_{0})}{|Q_{n}(u_{0})|}   
    \end{cases}
\end{align}
But from \cite{brzezniak2020global} we have $Q_{n}(\Delta(u_{n}))=\Delta(u_{n})$,
 $Q_{n}(F(u_{n}))=F(u_{n})$ and

 \begin{align*}
     Q_{n}(\Lambda_{j}(u_{n}))&=Q_{n}(\varphi_{j}-\left \langle \varphi_{j},u_{n} \right \rangle u_{n})\\
     &=Q_{n}(\varphi_{j})-\left \langle \varphi_{j},u_{n}\right \rangle Q_{n}(u_{n}) \\
     &=\varphi_{j}-\left \langle \varphi_{j}, u_{n} \right \rangle u_{n}\\
     &=\Lambda_{j}(u_{n}).
 \end{align*}
Hence, \eqref{FAedo Galerkin befor} takes the following form
\begin{align}\label{faedo after Qn}
    \begin{cases}
    du_{n}&=\left[\Delta(u_{n})+F(u_{n})\right]dt+\sum_{j=1}^{N} \Lambda_{j}(u_{n})\circ dW_{j}. \\ 
    u_{n}(0)&=\frac{Q_{n}(u_{0})}{|Q_{n}(u_{0})|}.   
    \end{cases}
\end{align}
And in It$\hat{o}$ form it is given by;

\begin{align}\label{Faedo in Ito}
    \begin{cases}
    du_{n}&=\left[\Delta(u_{n})+F(u_{n})+\frac{1}{2}\sum_{j=1}^{N} k_{j}(u_{n}))\right]dt+\sum_{j=1}^{N} \Lambda_{j}(u_{n})dW_{j}. \\ 
    u_{n}(0)&=\frac{Q_{n}(u_{0})}{|Q_{n}(u_{0})|}   
    \end{cases}
\end{align}.

\begin{theorem}\label{Invarience in Finite dimension}
If  $u_{0} \in \mathcal{M}\cap \mathcal{V}$ then $u(t)$ satisfying \eqref{faedo after Qn} belongs to $\mathcal{M}$.
\end{theorem}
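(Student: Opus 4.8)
The plan is to show that the squared $\mathcal{H}$-norm $\psi(t):=|u_n(t)|_{\mathcal{H}}^2$ is identically equal to $1$, which is exactly the assertion $u_n(t)\in\mathcal{M}$. Since $u_n$ solves the system \eqref{Faedo in Ito} inside the finite-dimensional space $F_n$, every object involved is a genuine finite-dimensional It\^o process, so the classical It\^o formula applies with no regularity worries. Applying it to the smooth functional $u\mapsto |u|_{\mathcal{H}}^2$, whose first two derivatives are $2\langle u,\cdot\rangle$ and $2\langle\cdot,\cdot\rangle$, I would write
\begin{align*}
 d\psi=\Big[\,2\langle u_n,\Delta u_n+F(u_n)\rangle+\sum_{j=1}^N\langle u_n,k_j(u_n)\rangle+\sum_{j=1}^N|\Lambda_j(u_n)|_{\mathcal{H}}^2\,\Big]dt+2\sum_{j=1}^N\langle u_n,\Lambda_j(u_n)\rangle\,dW_j .
\end{align*}
The goal is then to reduce this to a closed, homogeneous linear equation for $\psi$ alone.

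The heart of the argument is a set of algebraic identities in which the factor $1-\psi$ is made explicit. For the noise, using $\Lambda_j(u_n)=\varphi_j-\langle\varphi_j,u_n\rangle u_n$, I would compute
\begin{align*}
 \langle u_n,\Lambda_j(u_n)\rangle=\langle\varphi_j,u_n\rangle\big(1-|u_n|_{\mathcal{H}}^2\big)=\langle\varphi_j,u_n\rangle(1-\psi),
\end{align*}
which displays the tangency of the noise to $\mathcal{M}$. For the drift I would use the self-adjointness of $A=\Delta$ with Dirichlet conditions, $\langle\Delta u_n,u_n\rangle=-\|u_n\|_{\mathcal{V}}^2$, together with the precise form of $F$, to obtain
\begin{align*}
 \langle u_n,\Delta u_n+F(u_n)\rangle=-(1-\psi)\big(\|u_n\|_{\mathcal{V}}^2+|u_n|_{L^{2n}}^{2n}\big).
\end{align*}
This step reveals why $F$ is built as it is: the Lagrange-multiplier terms $\|u_n\|_{\mathcal{V}}^2u_n$ and $|u_n|_{L^{2n}}^{2n}u_n$ are exactly the normal components that make the reaction-diffusion drift tangent to $\mathcal{M}$, so the inner product vanishes on the sphere. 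A parallel but longer computation expands $\sum_j\langle u_n,k_j(u_n)\rangle$ and $\sum_j|\Lambda_j(u_n)|_{\mathcal{H}}^2$ in terms of $a_j:=\langle\varphi_j,u_n\rangle$ and again factors out $(1-\psi)$.

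Collecting these identities, every bracketed term carries the common factor $1-\psi$, so the norm satisfies a homogeneous linear It\^o equation of the form
\begin{align*}
 d\psi=(1-\psi)\big[\,\alpha(t)\,dt+\textstyle\sum_{j=1}^N\beta_j(t)\,dW_j\,\big],\qquad \psi(0)=\Big|\tfrac{Q_n(u_0)}{|Q_n(u_0)|}\Big|_{\mathcal{H}}^2=1,
\end{align*}
with adapted, pathwise-continuous coefficients $\alpha,\beta_j$ on any interval on which $u_n$ is defined. Setting $\eta:=1-\psi$ gives $d\eta=-\eta[\alpha\,dt+\sum_j\beta_j\,dW_j]$ with $\eta(0)=0$; since the equation is linear in $\eta$, pathwise uniqueness (equivalently, the stochastic exponential representation) forces $\eta\equiv0$, i.e.\ $\psi\equiv1$. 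Hence $u_n(t)\in\mathcal{M}$ for all $t$.

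The main obstacle is not conceptual but one of careful bookkeeping: one must verify that the It\^o second-order correction $\sum_j|\Lambda_j(u_n)|_{\mathcal{H}}^2$ and the Stratonovich-to-It\^o drift $\tfrac12\sum_j\langle u_n,k_j(u_n)\rangle$ combine so as to preserve the factor $(1-\psi)$, leaving no residual term that fails to vanish on $\mathcal{M}$. This is precisely the role of the Stratonovich (tangent, gradient-type) noise. Indeed, a cleaner route avoids these correction terms entirely: working directly in the Stratonovich form \eqref{faedo after Qn} and using the ordinary chain rule gives $d\psi=2\langle u_n,\Delta u_n+F(u_n)\rangle\,dt+2\sum_j\langle u_n,\Lambda_j(u_n)\rangle\circ dW_j$, and substituting the two displayed identities yields the homogeneous equation $d\psi=(1-\psi)[\cdots]$ at once, bypassing the separate computation of $k_j$ and of the quadratic variation.
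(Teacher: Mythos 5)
Your proposal is correct and follows essentially the same route as the paper: apply the It\^o formula to the squared $\mathcal{H}$-norm, verify that the drift, the Stratonovich--It\^o correction, the quadratic-variation term, and the noise coefficient all carry the common factor $1-|u_n|_{\mathcal{H}}^2$, and conclude from uniqueness for the resulting homogeneous linear SDE with zero initial datum that $|u_n(t)|_{\mathcal{H}}^2\equiv 1$. The only difference is that the paper imports the factored It\^o identity directly from \cite{brzezniak2020global}, whereas you carry out the algebra explicitly (and correctly).
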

\begin{proof}
 By letting $ \Phi(u_{n})=\frac{1}{2} |u_{n}|_{\mathcal{H}}^{2} $ and applying It$\hat{o}$ lemma to $ \Phi(u_{n})$ we get  \cite{brzezniak2020global}
\begin{align}
    |u_{n}(t)|_{\mathcal{H}}^2-1&= \sum_{j=1}^{N} \int_{0}^{t} 2\left \langle u_{n}(t'), \varphi_{j} \right \rangle \left(|u_{n}(t')|_{\mathcal{H}}^2-1\right)dW_{j}(t') \nonumber \\&+\int_{0}^{t} 2\left[||u_{n}(t')||^{2}+|u_{n}(t')|_{L^{2n}}^{2n}-|\varphi_{j}|_{\mathcal{H}}^{2}+3\left \langle u_{n}(t'), \varphi_{j}\right \rangle^{2}\right]\left(|u_{n}(t')|_{\mathcal{H}}^2-1\right)dt', \; \mathbb{P}-a.s. \nonumber
\end{align}
Setting:
\begin{align*}
\gamma(t)&:=|u_{n}(t)|_{\mathcal{H}}^2-1\\
a(t)&:=2\left \langle u_{n}(t'), \varphi_{j} \right \rangle\\
b(t)&:=2\left[|u_{n}(t')|_{L^{2n}}^{2n}+||u_{n}(t')||^{2}-|\varphi_{j}|_{\mathcal{H}}^{2}+3\left \langle u_{n}(t'), \varphi_{j}\right \rangle^{2}\right]\\
G_{1}(t,a(t))&:=a(t)\gamma(t)\\
G_{2}(t,a(t))&:=b(t)\gamma(t).
\end{align*}
We have 

\begin{align}\label{Invariance of manifold in finite dimenson}
\begin{cases}
     \gamma(t)=\int_{0}^{t} G_{1}(s,a(t')) dW(t')+\int_{0}^{t} G_{2}(s,b(t')) dt'.\\
     \gamma(0)=0
\end{cases}    
\end{align}
From \cite{brzezniak2020global}, a unique solution exists $w$ to \eqref{Invariance of manifold in finite dimenson}. Uniqueness gives 
\begin{align*}
    w(t)=\gamma(t)=0, \text{ i.e. }|u_{n}(t)|_{\mathcal{H}}^2=1.
\end{align*}
\end{proof}

\section{Some Important Estimates }
\begin{theorem}\label{Important Estimates}
Suppose $u_{n}(t)$ solves \eqref{faedo after Qn} then\\
\begin{align}
    \sup_{n\geq 1} \mathbb{E} \left(\sup_{s \in [0,T]}||u_{n}(t')||^{2}\right) \leq C_{1},\\
    \sup_{n\geq 1} \mathbb{E} \left(\sup_{s \in [0,T]}|u_{n}(t')|^{2n}_{{L^{2n}}}\right) \leq C_{2},\\
    \sup_{n\geq 1}\mathbb{E}\left(\int_{0}^{T} |u_{n}(t')|^{2}_{\mathcal{E}}dt'\right) \leq C_{3},
\end{align}
for some constants $C_{1},C_{2},C_{3}>0$.
\end{theorem}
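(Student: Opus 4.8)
The plan is to derive energy estimates via It\^o's formula applied to suitable functionals of $u_n$, exploiting the tangency of the noise to the manifold $\mathcal{M}$. The key structural fact, established in Theorem \ref{Invarience in Finite dimension}, is that $|u_n(t)|_{\mathcal{H}}^2 = 1$ for all $t$, which means the $L^2$-norm is conserved and many terms that would otherwise be problematic will simplify or cancel. I would treat the three estimates together, since they naturally arise from a single energy functional and its time evolution.

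First I would apply It\^o's formula to the energy functional $\Phi(u_n) = \frac{1}{2}\|u_n\|_{\mathcal{V}}^2 = \frac{1}{2}|\nabla u_n|_{\mathcal{H}}^2$, or equivalently to $-\frac{1}{2}\langle \Delta u_n, u_n\rangle$. Computing $d\Phi(u_n)$ using equation \eqref{Faedo in Ito} in It\^o form, the drift produces the crucial dissipative term $-|\Delta u_n|_{\mathcal{H}}^2 = -|u_n|_{\mathcal{E}}^2$ (up to equivalence of norms on $\mathcal{E}=D(A)$), which, after integrating in time and rearranging, is exactly what yields the third estimate $\mathbb{E}\int_0^T |u_n(t')|_{\mathcal{E}}^2\,dt' \le C_3$. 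The nonlinear term $F(u_n)$ paired against $\Delta u_n$ must be controlled; here I would use that $F(u) = \|u\|^2 u + |u|_{L^{2n}}^{2n} u - u^{2n-1}$ (reading $F$ through its definition) contains sign-definite pieces. The terms $\|u_n\|^2$ and $|u_n|_{L^{2n}}^{2n}$ are scalar coefficients by virtue of $|u_n|_{\mathcal{H}}=1$, and the genuinely nonlinear term $-u_n^{2n-1}$ when integrated against $-\Delta u_n$ gives, after integration by parts, a term proportional to $\int |\nabla u_n|^2 u_n^{2n-2}\,dx \ge 0$ with a favorable sign, which I expect to absorb rather than obstruct.

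Next I would handle the stochastic and correction terms. Because $\Lambda_j(u_n) = \varphi_j - \langle \varphi_j, u_n\rangle u_n$ is tangent to $\mathcal{M}$ and the $\varphi_j$ are fixed smooth functions, the diffusion coefficients and the It\^o correction $\frac{1}{2}\sum_j k_j(u_n)$ are bounded in $\mathcal{V}$ in terms of $\|u_n\|_{\mathcal{V}}$ and $|u_n|_{\mathcal{H}}=1$; these produce at worst terms linear in $\Phi(u_n)$ plus constants. To pass from pathwise bounds to the supremum-in-time bounds inside the expectation (the form all three estimates take), the main tool is the Burkholder--Davis--Gundy inequality applied to the martingale part, followed by Young's inequality to split off a small multiple of $\mathbb{E}\sup_s \Phi(u_n(s))$ that can be absorbed into the left-hand side. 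A Gronwall argument then closes the estimate for $\mathbb{E}\sup_{s\in[0,T]}\|u_n(s)\|_{\mathcal{V}}^2 \le C_1$. The $L^{2n}$ estimate giving $C_2$ I would obtain in parallel by applying It\^o to $\frac{1}{2n}|u_n|_{L^{2n}}^{2n}$, where the Laplacian again supplies a good-sign dissipative contribution and the remaining nonlinear terms are controlled using $|u_n|_{\mathcal{H}}=1$.

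The main obstacle I anticipate is the careful bookkeeping of the nonlinearity $F(u_n)$ and the It\^o correction $k_j(u_n)$ when paired against $-\Delta u_n$: one must verify that the potentially dangerous high-order terms either carry a dissipative sign or are dominated by the $|u_n|_{\mathcal{E}}^2$ term via interpolation and Young's inequality, so that no term of the wrong sign survives on the right-hand side. The conservation law $|u_n|_{\mathcal{H}} = 1$ is what makes this feasible, as it removes the scaling freedom that would otherwise let the cubic and higher nonlinear terms dominate; the estimates are essentially uniform in $n$ precisely because the Galerkin projection $Q_n$ commutes with $\Delta$ and preserves the structure of $\Lambda_j$, as already computed in the derivation of \eqref{faedo after Qn}. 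I would be careful that the constants $C_1, C_2, C_3$ depend only on $T$, the data $u_0$, and the fixed functions $\varphi_j$, and not on $n$.
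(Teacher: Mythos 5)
Your plan for the third estimate coincides with the paper's: apply It\^o's formula to $\Gamma(u)=\tfrac12\|u\|^2$, read off the dissipative term $-\|u_n\|_{\mathcal{E}}^2$ from $\langle u,\Delta u\rangle_{\mathcal{V}}$, and bound the remaining drift, correction and martingale contributions. The divergence is in how the first two estimates are obtained, and there is a gap there. The paper does not prove them: it imports the uniform bound on $\mathbb{E}\bigl(\phi(u_n(t))\bigr)$ for the \emph{combined} functional $\phi(u)=\tfrac12\|u\|^2+\tfrac12|u|_{L^{2n}}^{2n}$ directly from \cite{brzezniak2020global}, and then uses the resulting constant $C_1$ as an a priori input to control the nonlinear terms $K_{21}=\int\|u_n\|^4\,dt'$ and $K_{22}=\int|u_n|_{L^{2n}}^{2n}\|u_n\|^2\,dt'$ in the $\mathcal{E}$-estimate. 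You instead propose to derive $C_1$ from It\^o on $\tfrac12\|u_n\|^2$ alone via Burkholder--Davis--Gundy, Young and Gronwall, relying on sign-definiteness of $F$.

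That closure does not work as described. The drift contribution is $\langle u_n,F(u_n)\rangle_{\mathcal{V}}=\|u_n\|^4+|u_n|_{L^{2n}}^{2n}\|u_n\|^2-(2n-1)\int u_n^{2n-2}|\nabla u_n|^2\,dx$; only the last term is sign-favourable, while the first two are positive and superlinear in the quantity being estimated, so a plain Gronwall argument does not close. Nor can they be absorbed into the dissipation with room to spare: on the manifold, $\|u_n\|^4=\langle u_n,-\Delta u_n\rangle^2\le |u_n|_{L^2}^2\,|\Delta u_n|^2=\|u_n\|_{\mathcal{E}}^2$ with constant exactly $1$, which consumes all of the dissipation you need to retain for the third estimate, and $|u_n|_{L^{2n}}^{2n}\|u_n\|^2\le C\|u_n\|^{2n+2}$ is of still higher order. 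The cancellation that rescues the argument lives in the combined constrained-energy functional (the $-u^{2n-1}$ part of $F$ played against the $L^{2n}$ part of $\phi$), which is precisely why the paper and \cite{brzezniak2020global} work with $\phi$ rather than with the two separate functionals you propose for $C_1$ and $C_2$. If the first two estimates are granted as imported facts, your argument for $C_3$ matches the paper's and goes through.
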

\begin{proof}
 Let 
\begin{align}
    \phi(u)=\frac{1}{2}||u_{n}(t')||^2+\frac{1}{2}|u_{n}(t')|^{2n}_{{L^{2n}}} \nonumber
\end{align}
then applying It$\hat{o}$ lemma to $\phi(u)$ it has been proved in \cite{brzezniak2020global} that expectation $\mathbb{E}(\phi(u_{n}(t)))$ is uniformly bounded.\\ Hence, we have $C_{1}$ and $C_{2}$ such that
\begin{align}
    \sup_{n\geq 1} \mathbb{E} \left(\sup_{s \in [0,T]}||u_{n}(t')||^{2}\right) \leq C_{1} \label{un in V} \nonumber\\
    \sup_{n\geq 1} \mathbb{E} \left(\sup_{s \in [0,T]}|u_{n}(t')|^{2n}_{{L^{2n}}}\right) \leq C_{2} \nonumber
\end{align}
In order to show \\
\begin{align}
    \sup_{n\geq 1}\mathbb{E}\left(\int_{0}^{T} |u_{n}(t')|^{2}_{\mathcal{E}}dt'\right) \leq C_{3}.
\end{align}
We will apply It$\hat{o}$ lemma to $\Gamma(u)$ where
\begin{align}
    \Gamma(u)=\frac{1}{2}||u||^2
\end{align}
In order to apply It$\hat{o}$ lemma we will make use of following terms computed in \cite{brzezniak2020global}.
\begin{align}
D_{u}(\Gamma(u))=\left \langle \Gamma'(u), h \right \rangle=\left \langle u, h \right \rangle_{\mathcal{V}} \nonumber \\
D^2_{u}(\Gamma(u))=\left \langle \Gamma''(u)a_{1}, a_{2} \right \rangle=\left \langle a_{1}, a_{2} \right \rangle_{\mathcal{V}}\nonumber.
\end{align}
In this perspective, 
after applying It$\hat{o}$ lemma to $\Gamma(u)$ yieldt' 
\begin{align}\label{Ito Lemma }
    \Gamma(u(t))-\Gamma(u(0))&=\sum_{j=1}^{N} \int_{0}^{t} \left \langle \Gamma'(u\left(t'\right)), \Lambda_{j}(u\left(t'\right)) \right \rangle dW_{j}\left(t'\right) +\int_{0}^{t} \left \langle \Gamma'(u\left(t'\right)), \Delta(u\left(t'\right))+F(u\left(t'\right)) \right \rangle dt'\nonumber\\&+\frac{1}{2} \sum_{j=1}^{N} \int_{0}^{t} \left \langle \Gamma'(u\left(t'\right)), k_{j}(u\left(t'\right)) \right \rangle dt'+\frac{1}{2} \sum_{j=1}^{N} \int_{0}^{t}\left \langle \Gamma''(u)\Lambda_{j}(u\left(t'\right)),\Lambda_{j}(u\left(t'\right)) \right \rangle dt'.
\end{align}
But
\begin{align}\label{gamma with delta and G}
    \left \langle \Gamma'(u), \Delta(u)+F(u) \right \rangle &=\left \langle u, \Delta(u)+F(u) \right \rangle_{\mathcal{V}}=\left \langle u, \Delta(u) \right \rangle_{\mathcal{V}}+\left \langle u, F(u) \right \rangle_{\mathcal{V}} \nonumber \\
    &=-\left \langle \nabla u, \nabla u \right \rangle_{\mathcal{V}}+\left \langle u, G(u) \right \rangle_{\mathcal{V}} =-||\nabla u||^{2}_{\mathcal{V}}+\left \langle u, F(u) \right \rangle_{\mathcal{V}} \nonumber \\
    &=-|| u||^{2}_{\mathcal{E}}+\left \langle u, F(u) \right \rangle_{\mathcal{V}}.
\end{align}
Equation \eqref{Ito Lemma } along with
equation \eqref{gamma with delta and G} produces 
\begin{align}\label{Ito Lemma again }
    \Gamma(u(t))-\Gamma(u(0))&=\sum_{j=1}^{N} \int_{0}^{t} \left \langle \Gamma'(u(t')), \Lambda_{j}(u(t')) \right \rangle dW_{j}(t') -\int_{0}^{t}|| u||^{2}_{\mathcal{E}} dt'+\int_{0}^{t} \left \langle u, F(u) \right \rangle_{\mathcal{V}} dt' \nonumber\\&+\frac{1}{2} \sum_{j=1}^{N} \int_{0}^{t} \left \langle \Gamma'(u(t')), k_{j}(u(t')) \right \rangle dt'+\frac{1}{2} \sum_{j=1}^{N} \int_{0}^{t}\left \langle \Gamma''(u)\Lambda_{j}(u(t')),\Lambda_{j}(u(t')) \right \rangle dt'\nonumber\\
    &=\sum_{0}^{N} K_{1,j}-\int_{0}^{t}|| u||^{2}_{\mathcal{E}} dt'+K_{2}+\frac{1}{2}\sum_{1}^{N}K_{3,j}+\frac{1}{2}\sum_{1}^{N}K_{4,j}\nonumber\\
    \Gamma(u(t))+\int_{0}^{t}|| u||^{2}_{\mathcal{E}} dt'&=\Gamma(u(0))+\sum_{0}^{N} K_{1,j}+K_{2}+\frac{1}{2}\sum_{1}^{N}K_{3,j}+\frac{1}{2}\sum_{1}^{N}K_{4,j}. \nonumber
\end{align}
Where
\begin{align*}
    K_{1,j}&=\int_{0}^{t} \left \langle \Gamma'(u(t')), \Lambda_{j}(u(t')) \right \rangle dW_{j}(t')\\
    K_{2}&=\int_{0}^{t} \left \langle u, F(u) \right \rangle_{\mathcal{V}} dt'\\
    K_{3,j}&=\int_{0}^{t} \left \langle \Gamma'(u(t')), k_{j}(u(t')) \right \rangle\\
    K_{4,j}&=\int_{0}^{t}\left \langle \Gamma''(u)\Lambda_{j}(u(t')),\Lambda_{j}(u(t')) \right \rangle dt'
\end{align*}
We will show uniformly boundedness of each of the above term one by one.

In \cite{brzezniak2020global}, it has been shown that
\begin{align}
    \mathbb{E}\left(\int_{0}^{t} \left \langle \Gamma'(u(t')), \Lambda_{j}(u(t'))^{2} \right \rangle dt'\right) < \infty   \nonumber
\end{align}
which makes $K_{1,j}$ martingale thus
\begin{align}
    \mathbb{E}(K_{1,j})=0.
\end{align}
Subsequently,
\begin{align}
    K_{2}&=\int_{0}^{t} \left \langle u(t') ,F(u(t')) \right \rangle_{\mathcal{V}}\nonumber\\
    &=\int_{0}^{t} \left \langle u(t') ,||u(t')||^{2}u(t')+|u(t')|_{L^{2n}}^{2n}u-u^{2n-1}(t') \right \rangle_{\mathcal{V}}dt'\nonumber\\
    &=\int_{0}^{t} ||u(t')||^{4}+|u(t')|_{L^{2n}}^{2n}||u(t')||^{2}-\left \langle u(t'),u^{2n-1}(t') \right \rangle_{\mathcal{V}}dt'\nonumber\\
    &=\int_{0}^{t} ||u(t')||^{4}dt'+\int_{0}^{t}|u(t')|_{L^{2n}}^{2n}||u(t')||^{2}dt'-\int_{0}^{t}\left \langle u(t'),u^{2n-1}(t') \right \rangle_{\mathcal{V}}dt'\nonumber\\
    &=K_{21}+K_{22}-K_{23}. \label{K2 bound}
\end{align}

With the help of theorem \eqref{Important Estimates} $K_{21}$ satisfies following estimate
\begin{align}\label{K21 estimate}
    K_{21}<C^{4}_{1}T.
\end{align}
Taking $K_{22}$,
\begin{align}
K_{22}=\int_{0}^{t}|u(t')|_{L^{2n}}^{2n}||u(t')||^{2}dt',\nonumber
\end{align}
in the light of Gagliardo Nirenberg inequality we have $ \mathcal{V} \hookrightarrow L^{2n} $ 
\begin{align}\label{K22 estimate}
    K_{22}&\leq C^{2n} \int_{0}^{t} ||u(t')||^{2n+2}_{\mathcal{V}} dt' 
    \leq C^{2n}C_{1}^{2n+2}t 
    \leq C^{2n}C_{1}^{2n+2}T.
\end{align}
Now we work with $K_{23}$
\begin{align}
K_{23}&=\int_{0}^{t} \left \langle u(t'),u^{2n-1}(t')
\right \rangle_{\mathcal{V}} dt'\nonumber\\
K_{23}&=\int_{0}^{t} \left(\int_{\Omega} \nabla u(t')\cdot \nabla u^{2n-1}(t')dx\right) dt'\nonumber\\
K_{23}&=\int_{0}^{t} \left((2n-1) \int_{\Omega} |\nabla u(t')|^{2}\cdot u^{2n-2}(t')dx\right) dt'\nonumber\\
K_{23}&<(2n-1)\int_{0}^{t} \left( \int_{\Omega} |\nabla u(t')|^{4}dx\right)^{\frac{1}{2}}\left(\int_{\Omega} u^{4n-4}(t')dx\right)^{\frac{1}{2}} dt'\nonumber\\
K_{23}&<(2n-1)\int_{0}^{t} \left(|\nabla u(t')|^{2}_{\mathcal{L}^4}\right)\left(|u|^{2n-2}_{L^{4n-4}}\right) dt'\nonumber
\end{align}
Using $L^p$ embeddings we achieve,
\begin{align}
    K_{23}&\leq(2n-1)\int_{0}^{t} C^{2}_{4} |u(t')|_{\mathcal{H}}^{2}C_{5}^{2n-2}|u(t')|^{2n-2}_{\mathcal{H}} dt'\nonumber\\
    &\leq(2n-1)C^{2}_{4}C_{5}^{2n-2}\int_{0}^{t}  |u(t')|_{\mathcal{H}}^{2n}dt'\nonumber\\
    &\leq(2n-1)C^{2}_{4}C_{5}C_{6}^{2n-2}\int_{0}^{t}  ||u(t')||_{\mathcal{V}}^{2n}dt'\nonumber\\
    &\leq(2n-1)C^{2}_{4}C_{5}C_{6}^{2n-2}C_{1}^{2n}T \label{K33 estimate}.
\end{align}

Equation \eqref{K2 bound} in the light of \eqref{K21 estimate}, \eqref{K22 estimate} and \eqref{K33 estimate} implies,
\begin{align}\label{k2 estimate}
    K_{2}< C_{7}.
\end{align}
For $C_{7}:=C^{4}_{1}T+C^{2n}C_{1}^{2n+2}T+(2n-1)C^{2}_{4}C_{5}C_{6}^{2n-2}C_{1}^{2n}T$.\\
Considering $K_{3,j}$,
\begin{align}
    K_{3,j}&=\int_{0}^{t} \left \langle u\left(t'\right), k(u\left(t'\right))\right \rangle_{\mathcal{V}}dt'\nonumber\\
    K_{3,j}&=\int_{0}^{t} \left \langle u\left(t'\right), -\left \langle \varphi_{j}, \Lambda_{j}(u)\right \rangle_{\mathcal{H}} u\left(t'\right)-\left \langle \varphi_{j} , u\left(t'\right) \right \rangle_{\mathcal{H}} \Lambda_{j}(u\left(t'\right)) \right \rangle_{\mathcal{V}}dt'\nonumber\\
    K_{3,j}&=\int_{0}^{t} -(\left \langle u\left(t'\right), u\left(t'\right)\right \rangle_{\mathcal{V}})\left \langle \varphi_{j}, \Lambda_{j}(u)\right \rangle_{\mathcal{H}} -\left \langle \varphi_{j} , u\left(t'\right) \right \rangle_{\mathcal{H}} \left \langle u\left(t'\right), \Lambda_{j}(u\left(t'\right)) \right \rangle_{\mathcal{V}}dt'\nonumber\\
    K_{3,j}&=\int_{0}^{t} -\vert\vert u\left(t'\right)\vert\vert^{2}_{\mathcal{V}}\left \langle \varphi_{j}, \Lambda_{j}(u)\right \rangle_{\mathcal{H}} -\left \langle \varphi_{j} , u\left(t'\right) \right \rangle_{\mathcal{H}} \left \langle u\left(t'\right), \Lambda_{j}(u\left(t'\right))  \right \rangle_{\mathcal{V}} dt'\nonumber\\
    &\leq \left \vert \int_{0}^{t} -\vert\vert u\left(t'\right)\vert\vert^{2}_{\mathcal{V}}\left \langle \varphi_{j}, \Lambda_{j}(u)\right \rangle_{\mathcal{H}} -\left \langle \varphi_{j} , u\left(t'\right) \right \rangle_{\mathcal{H}} \left \langle u\left(t'\right), \Lambda_{j}(u\left(t'\right))  \right \rangle_{\mathcal{V}} dt' \right\vert \nonumber\\
    &\leq \int_{0}^{t} \left \vert  -\vert\vert u\left(t'\right)\vert\vert^{2}_{\mathcal{V}}\left \langle \varphi_{j}, \Lambda_{j}(u)\right \rangle_{\mathcal{H}} -\left \langle \varphi_{j} , u\left(t'\right) \right \rangle_{\mathcal{H}} \left \langle u\left(t'\right), \Lambda_{j}(u\left(t'\right))  \right \rangle_{\mathcal{V}} \right\vert dt'  \nonumber\\
    &\leq \int_{0}^{t} \left \vert \vert\vert u\left(t'\right)\vert\vert^{2}_{\mathcal{V}}\left \langle \varphi_{j}, \Lambda_{j}(u)\right \rangle_{\mathcal{H}}\right \vert + \left \vert \left \langle \varphi_{j} , u\left(t'\right) \right \rangle_{\mathcal{H}} \left \langle u\left(t'\right), \Lambda_{j}(u\left(t'\right))  \right \rangle_{\mathcal{V}} \right\vert dt'  \nonumber\\
    &\leq \int_{0}^{t}  \vert\vert u\left(t'\right)\vert\vert^{2}_{\mathcal{V}} \left \vert \left \langle \varphi_{j}, \Lambda_{j}(u)\right \rangle_{\mathcal{H}}\right \vert + \left \vert \left \langle \varphi_{j} , u\left(t'\right) \right \rangle_{\mathcal{H}} \right \vert  \left \vert \left \langle u\left(t'\right), \Lambda_{j}(u\left(t'\right))  \right \rangle_{\mathcal{V}} \right\vert dt'  \nonumber\\
    &\leq \int_{0}^{t}  \left\vert\right\vert u\left(t'\right) \left\vert\right\vert^{2}_{\mathcal{V}} \left\vert\right\vert \varphi_{j} \left\vert\right\vert^{2}_{\mathcal{H}}  \left\vert\right\vert \Lambda_{j}(u)\left\vert\right\vert^{2}_{\mathcal{H}} + \left\vert\right\vert  \varphi_{j} \left\vert\right\vert^{2}_{\mathcal{H}} \left \vert \vert  u\left(t'\right) \right \vert  \vert^{2}_{\mathcal{H}}\left \vert \left \langle u\left(t'\right), \Lambda_{j}(u\left(t'\right))  \right \rangle_{\mathcal{V}} \right\vert dt'  \nonumber\\
    &\leq \int_{0}^{t}  \left\vert\right\vert u\left(t'\right) \left\vert\right\vert^{2}_{\mathcal{V}} \left\vert\right\vert \varphi_{j}\left\vert\right\vert^{2}_{\mathcal{H}}  \left\vert\right\vert \Lambda_{j}(u)\left\vert\right\vert^{2}_{\mathcal{H}} + \left\vert\right\vert  \varphi_{j} \left\vert\right\vert^{2}_{\mathcal{H}} \left \vert \vert  u\left(t'\right) \right \vert  \vert^{2}_{\mathcal{H}} \left \vert \vert  u\left(t'\right) \right \vert  \vert^{2}_{\mathcal{V}} \left \vert \right \vert  \Lambda_{j}(u\left(t'\right)) \left \vert\right \vert  \vert^{2}_{\mathcal{V}} dt'. \label{k3j estimate befor Bj , u and fj}
\end{align}
But
\begin{align}
\left\vert \right \vert \Lambda_{j}(u) \left\vert \right \vert^{2}&=\left\vert \right \vert f-\left \langle \varphi_{j} u\right \rangle_{\mathcal{H}} u \left\vert \right \vert^{2}\nonumber\\
\left\vert \right \vert \Lambda_{j}(u) \left\vert \right \vert^{2}& \leq \left\vert \right \vert \varphi_{j} \left\vert \right \vert^{2} + \left\vert  \varphi_{j} \right \vert^{2}_{\mathcal{H}} \left\vert  u \right \vert^{2}_{\mathcal{H}}  \left\vert \right \vert u \left\vert \right \vert^{2} \label{ Bj estimate}
\end{align}
As
\begin{align} \label{fj is in V and H}
    \varphi_{j} \in \mathcal{V} \hookrightarrow \mathcal{H},
\end{align}
so it follows that,
\begin{align}\label{u bounded in V}
    \left \vert \right \vert u \left \vert \right \vert^{2} < C_{1}.
\end{align}

Combining \eqref{k3j estimate befor Bj , u and fj}, \eqref{ Bj estimate} and \eqref{fj is in V and H}  together with  \eqref{u bounded in V}, we can find a constant $C_{8}$ such that  
\begin{align} \label{k3j estimate}
    K_{3j} \leq C_{8}.
\end{align}
Finally, we will consider 

\begin{align}
    K_{4,j}&=\int_{0}^{t}\left \langle \Gamma''(u)\Lambda_{j}(u(t')),\Lambda_{j}(u(t')) \right \rangle dt' =\int_{0}^{t} \left\vert \right \vert B_{u(t')} \left \vert \right \vert^{2}  dt'.  \label{k4j before estimate}
\end{align}
\eqref{k4j before estimate} in the view of \eqref{ Bj estimate}, \eqref{fj is in V and H} and \eqref{u bounded in V}  assures an existence of a constant say $C_{9}$  such that 
\begin{align}\label{k4j estimate}
    K_{4j} \leq C_{9}.
\end{align}

Combining \eqref{k4j estimate}, \eqref{k3j estimate} and \eqref{k2 estimate} with \eqref{Ito Lemma again } we get 
\begin{align}
    \frac{1}{2}\left\vert \right\vert u \left\vert \right\vert^{2}+\int_{0}^{t}  \left\vert \right\vert u \left\vert \right\vert^{2}_{\mathcal{E}} \leq \frac{1}{2} \left\vert \right\vert u_{0} \left\vert \right\vert^{2} +\sum_{j=0}^{N} K_{1j} +C_{7}+\frac{1}{2} \sum_{j=1}^{N} C_{8}+\frac{1}{2} \sum_{j=1}^{N} C_{9}. \nonumber   
\end{align}
In particular we have,

\begin{align*}
    \int_{0}^{T}  \left\vert \right\vert u_{n} \left\vert \right\vert^{2}_{\mathcal{E}} \leq \frac{1}{2} \left\vert \right\vert u_{0} \left\vert \right\vert^{2} +\sum_{j=0}^{N} K_{1j} +C_{7}+\frac{1}{2} \sum_{j=1}^{N} C_{8}+\frac{1}{2} \sum_{j=1}^{N} C_{9}   
\end{align*}
Applying expectation, both sides 

\begin{align*}
    \mathbb{E} \left( \int_{0}^{T}  \left\vert \right\vert u_{n} \left\vert \right\vert^{2}_{\mathcal{E}} \right)\leq \frac{1}{2} \left\vert \right\vert u_{0} \left\vert \right\vert^{2} +\sum_{j=0}^{N} \mathbb{E}(K_{1j}) +C_{7}+\frac{1}{2} \sum_{j=1}^{N} C_{8}+\frac{1}{2} \sum_{j=1}^{N} C_{9}   
\end{align*}

\begin{align*}
    \mathbb{E} \left( \int_{0}^{T}  \left\vert \right\vert u_{n} \left\vert \right\vert^{2}_{\mathcal{E}} \right)\leq \frac{1}{2} \left\vert \right\vert u_{0} \left\vert \right\vert^{2} +C_{7}+\frac{1}{2} \sum_{j=1}^{N} C_{8}+\frac{1}{2} \sum_{j=1}^{N} C_{9}   
\end{align*}
Set $C_{3}:= \frac{1}{2} \left\vert \right\vert u_{0} \left\vert \right\vert^{2} +C_{7}+\frac{1}{2} \sum_{j=1}^{N} C_{8}+\frac{1}{2} \sum_{j=1}^{N} C_{9}< \infty$. We have

\begin{align*}
    \mathbb{E} \left( \int_{0}^{T}  \left\vert \right\vert u_{n} \left\vert \right\vert^{2}_{\mathcal{E}} \right) < C_{3}
\end{align*}
Taking sup over $n$,
\begin{align}\label{u bounded in E}
    \sup_{n\geq 1}\mathbb{E} \left( \int_{0}^{T}  \left\vert \right\vert u_{n}(t') \left\vert \right\vert^{2}_{\mathcal{E}} dt' \right) < C_{3}
\end{align}
Thus proof gets concluded.
\end{proof}

\section{Tightness of Measures}\label{tightness of measure}
\begin{theorem}\label{set of measures tight thm}
    Let $\left\{\mathcal{L}(u_{n}): n\in \mathbf{N}\right\}$ be a set of measures, then this set is tight on $(\mathcal{Y}_{T}, \mathcal{F}))$. Here
\begin{align*}
\mathcal{Y}_{T}=C{([0,T],\mathcal{L}^{2});\mathcal{H})}\cup L_{w}^2([0,T];D(\Delta))\cup \mathcal{L}^{2}([0,T];\mathcal{V})\cup \mathcal{C}([0,T],\mathcal{V}_{w}).
\end{align*}
\end{theorem}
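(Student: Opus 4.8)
The plan is to deduce tightness from the uniform a priori estimates of Theorem~\ref{Important Estimates} together with the Aldous condition [A] and its equivalence with the criterion [T]. Since $\mathcal{Y}_T$ is the intersection of four path spaces, I would establish relative compactness of the laws $\mathcal{L}(u_n)$ in each factor separately and then invoke a Dubinsky-type compactness result to conclude tightness on the intersection. The two weak-topology factors $L^2_w([0,T],D(\Delta))$ and $\mathcal{C}([0,T],\mathcal{V}_w)$ are handled by soft functional-analytic arguments exploiting reflexivity, whereas the two strong-topology factors $\mathcal{C}([0,T],\mathcal{H})$ and $\mathcal{L}^2([0,T],\mathcal{V})$ require the Aldous condition to control time regularity.

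First I would record the consequences of Theorem~\ref{Important Estimates}: the bounds $\sup_n \mathbb{E}[\sup_{s\in[0,T]} \|u_n(s)\|^2_{\mathcal{V}}] \le C_1$ and $\sup_n \mathbb{E}[\int_0^T |u_n(s)|^2_{\mathcal{E}}\,ds] \le C_3$ show that the processes are uniformly bounded in $L^\infty(0,T;\mathcal{V})$ and in $L^2(0,T;D(\Delta))$ in expectation. For the factor $L^2_w([0,T],D(\Delta))$, the set $\{u : \int_0^T |u(s)|^2_{\mathcal{E}}\,ds \le R\}$ is a ball of the reflexive space $\mathcal{L}^2([0,T],D(\Delta))$, hence weakly compact; by Chebyshev these balls carry all but $\varepsilon$ of the mass uniformly in $n$, giving tightness there. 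For $\mathcal{C}([0,T],\mathcal{V}_w)$ I would combine the uniform bound in $L^\infty(0,T;\mathcal{V})$ with equicontinuity in the weak topology of $\mathcal{V}$, the latter following from the Aldous-type estimate below tested against a countable family of functionals generating the weak topology on bounded subsets of $\mathcal{V}$.

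The core of the argument is the verification of the Aldous condition [A]. Using the It\^o form \eqref{Faedo in Ito}, for any sequence of stopping times $\tau_n \le T$ and any $\theta \le \delta$ I would estimate $\mathbb{E}[|u_n(\tau_n+\theta) - u_n(\tau_n)|_{\mathcal{H}}]$ term by term. The Laplacian term $\int_{\tau_n}^{\tau_n+\theta}\Delta u_n\,ds$ is controlled in $\mathcal{H}$ by $\theta^{1/2}(\int_0^T |u_n|^2_{\mathcal{E}}\,ds)^{1/2}$ via Cauchy--Schwarz and the $L^2(0,T;\mathcal{E})$ bound; the polynomial nonlinearity $F(u_n)$ and the Stratonovich correction $\tfrac{1}{2}\sum_j k_j(u_n)$ are estimated by the Gagliardo--Nirenberg and Sobolev embeddings already used in the proof of Theorem~\ref{Important Estimates}, yielding a bound of order $\theta$; and the stochastic integral $\sum_j \int \Lambda_j(u_n)\,dW_j$ is controlled by the It\^o isometry together with the bound \eqref{ Bj estimate} on $\Lambda_j$, yielding a bound of order $\theta^{1/2}$. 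Altogether this produces $\mathbb{E}[\rho(u_n(\tau_n+\theta),u_n(\tau_n))] \le C\,\theta^{1/2}$ uniformly in $n$, and Chebyshev's inequality then delivers [A].

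Finally, by the Proposition asserting the equivalence of [T] and [A], the Aldous condition yields [T]; the tightness criterion together with the lemma on the modulus of continuity gives relative compactness of the laws in $\mathcal{C}([0,T],\mathcal{H})$, and an Aubin--Lions compactness embedding of the form $\{u \in \mathcal{L}^2(0,T;\mathcal{E}) : \partial_t u \in \mathcal{L}^2(0,T;\mathcal{V}')\} \hookrightarrow \mathcal{L}^2(0,T;\mathcal{V})$ upgrades this to the $\mathcal{L}^2([0,T],\mathcal{V})$ factor. Collecting the four factors via the Dubinsky criterion concludes tightness on $\mathcal{Y}_T$. I expect the principal obstacle to be the verification of [A] for the nonlinear drift $F(u_n)$: because $F$ contains the cubic term $\|u_n\|^2 u_n$ and the high-order terms $|u_n|^{2n}_{L^{2n}}u_n$ and $u_n^{2n-1}$, bounding $\int_{\tau_n}^{\tau_n+\theta} F(u_n)\,ds$ uniformly in $n$ requires carefully combining the pathwise $L^\infty(0,T;\mathcal{V})$ bound with the integrability furnished by Theorem~\ref{Important Estimates}, and one must choose the space $\mathcal{H}$ (or a suitable negative-order space) so that the Laplacian term and all nonlinear terms are simultaneously controllable.
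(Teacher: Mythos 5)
Your proposal is correct and follows essentially the same route as the paper: the core of the paper's argument is exactly your verification of the Aldous condition \textbf{[A]} term by term on the It\^o decomposition $u_n = u_n(0)+\int\Delta u_n+\int F(u_n)+\tfrac12\sum_j\int k_j(u_n)+\sum_j\int \Lambda_j(u_n)\,dW_j$, using the a priori bounds of Theorem \ref{Important Estimates} and Chebyshev's inequality, followed by the equivalence of \textbf{[T]} and \textbf{[A]}. You are somewhat more explicit than the paper about how the uniform bounds in $\mathcal{L}^2([0,T],D(\Delta))$ and $L^\infty(0,T;\mathcal{V})$ combine with \textbf{[A]} to give tightness on each of the four factors of $\mathcal{Y}_T$ (which the paper leaves implicit), and your bound on the $F$-term correctly retains the dependence on the time increment, which the paper's estimate \eqref{L3 estimate} loses by extending the integral to all of $[0,T]$.
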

\begin{proof}
We guarantee that the sequence obeys Aldous conditions, which will suffice as proof. Consider the sequence $(\tau_n)_{n \in \mathbf{N}}$ of stopping times in such a way that $0\leq s_{n} \leq T $. Taking equation \eqref{Faedo in Ito}, and $0\leq t\leq T$,
\begin{align*}
    u_{n}&=u_{n}(0) +\int_{0}^{t} \Delta u_{n}(t') dt'+ \int_{0}^{t} F(u_{n}(t')) dt' +\frac{1}{2}\sum_{j=1}^{N} \int_{0}^{t} k_{j}(u_{n}(t')))dt'+\sum_{j=1}^{N} \int_{0}^{t} \Lambda_{j}(u_{n}(t'))dW_{j} \\ &:=L_{1}^{n}+L_{2}^{n}+L_{3}^{n}+.5\sum_{j=1}^{N} L_{4}^{n}+\sum_{j=1}^{N} L_{5}^{n}
\end{align*}
consider  $a > 0$. We will estimate $L_{i}^{n}$.
Since $\Delta: \mathcal{E} \rightarrow \mathcal{H}$ is linear and continuous, so by applying Holder's inequality and \eqref{u bounded in E}
\begin{align}
    \mathbb{E} \left(\left|L^{n}_{2}(s_{n}+a)-L^{n}_{2}(s_{n})\right|_{\mathcal{H}}\right)&=\mathbb{E} \left(\left|\int_{s_{n}}^{s_{n}+a} \Delta u_{n}(t') dt'\right|_{\mathcal{H}}\right)\nonumber \\ 
    &\leq \mathbb{E} \left(\int_{s_{n}}^{s_{n}+a} \left|\Delta u_{n}(t') \right|_{\mathcal{H}}dt'\right)\nonumber\\ 
    &\leq \mathbb{E} \left(\int_{s_{n}}^{s_{n}+a} ||u_{n}(t') ||_{\mathcal{E}}dt'\right)\nonumber\\
    &\leq ka^{\frac{1}{2}} \left ( \mathbb{E} \left(\int_{0}^{T} ||u_{n}(t') ||^{2}_{\mathcal{E}}dt'\right) \right)^{\frac{1}{2}}\nonumber\\
    &\leq ka^{\frac{1}{2}}C^{\frac{1}{2}}  \label{L2 estimate}
\end{align}

Taking $L_{3}^n$
\begin{align}
    \mathbb{E} (\left|L^{n}_{3}\left(s_{n}+a)-L^{n}_{3}(s_{n})\right|_{\mathcal{H}}\right)&=\mathbb{E} \left(\left|\int_{s_{n}}^{s_{n}+a} F( u_{n}(t')) dt'\right|_{\mathcal{H}}\right)\nonumber\\
    &\leq \mathbb{E} \left(\int_{s_{n}}^{s_{n}+a} |F( u_{n}(t')) |_{\mathcal{H}}dt'\right) \label{Estimating L3 before G(u) estimate}
\end{align}
From \cite{brzezniak2020global} $F(u_n)$ satisfies, 
\begin{align}
    |F(u_{n})-F(v_{n})|_{\mathcal{H}} \leq H_{1}(||u_{n}||,||v_{n}||) ||u_{n}-v_{n}|| \nonumber
\end{align}
so with $v_n=0$
\begin{align}\label{estimate for G(u)}
    |F(u_{n})|_{\mathcal{H}} \leq H_{1}(||u_{n}||,0) ||u_{n}||.
\end{align}

\eqref{Estimating L3 before G(u) estimate} and \eqref{estimate for G(u)} together implies that

\begin{align}
    \mathbb{E} (|L^{n}_{3}\left(s_{n}+a)-L^{n}_{3}(s_{n})|_{\mathcal{H}}\right) &\leq \mathbb{E} \left(\int_{s_{n}}^{s_{n}+a} H_{1}(||u_{n}||,0) ||u_{n}|| dt'\right) \nonumber \\
    &\leq \mathbb{E} \left(\int_{0}^{T} H_{1}(||u_{n}(t')||,0) ||u_{n}(t')|| dt'\right) \nonumber\\
    &\leq \mathbb{E} \left(\int_{0}^{T} H_{1}\left(||u_{n}(t')||,0 \right
    ) ||u_{n}(t')|| dt'\right). \label{before using Hn}
\end{align}
One can find in \cite{brzezniak2020global} that $H_1$ is given by
\begin{align*} 
H_{1}(||u_{n}||,||v_{n}||)&=C\left[||u_{n}||^{2}+||v_{n}||^{2}+\left(||u_{n}||+||v_{n}||\right)^2\right]+\\ &C_{2n}\left[ \left(\frac{2n-1}{2}\right)(||u_{n}||^{2n-1}+||v_{n}||^{2n-1})(||u_{n}||+||v_{n}||)\right]\\&+C_{2n}\left[\left(||u_{n}||^{2n}+||v_{n}||^{2n}\right)+\left(||u_{n}||^{2n-2}+||v_{n}||^{2n-2}\right) \right].   
\end{align*}
Hence,
\begin{align*} H_{1}(||u_{n}||,0)&=C\left[2||u_{n}||^{2}\right]+C_{2n}\left[ \left(\frac{2n-1}{2}\right)\left(||u_{n}||^{2n-1}\right)(||u_{n}||)+\left(||u_{n}||^{2n}\right)+\left(||u_{n}||^{2n-2}\right) \right]\\
&=2C\left[||u_{n}||^{2}\right]+C_{2n}\left[ \left(\frac{2n-1}{2}\right)\left(||u_{n}||^{2n}\right)+\left(||u_{n}||^{2n}\right)+\left(||u_{n}||^{2n-2}\right) \right]\\
&=2C||u_{n}||^{2}+C_{2n}\left[ (\frac{2n+1}{2})||u_{n}||^{2n}+||u_{n}||^{2n-2} \right],
\end{align*}
with the help of Theorem \eqref{Important Estimates} we can find a constant $C_{10}$ such that 
\begin{align}
    H_{1}(||u||,0)\leq C_{10}. \label{bounded for H1 of u , 0}
\end{align}
This way, \eqref{before using Hn} along with \eqref{bounded for H1 of u , 0} implies,
\begin{align*}
    \mathbb{E}\left(\int_{0}^{T} H_{1}(||u_{n}(t')||,0)||u_{n}(t')|| dt' \right)\leq     &\mathbb{E} \left(\int_{0}^{T} C_{10}||u_{n}(t')||dt'\right)\\
    &\leq \mathbb{E} \left[ \left(\int_{0}^{T} C_{10}^{2} dt'\right)^{\frac{1}{2}}\cdot \left(\int_{0}^{T} ||u_{n}(t')||^{2} dt'\right)^{\frac{1}{2}}     \right]\\
    &=C_{10}T^{\frac{1}{2}}\mathbb{E} \left(\int_{0}^{T} ||u_{n}(t')||^{2} dt'\right)^{\frac{1}{2}}\\
    &\leq C_{10}T^{\frac{1}{2}} \left(\mathbb{E} \left[\int_{0}^{T} ||u_{n}(t')||^{2} dt'\right]\right)^{\frac{1}{2}}
    \\&\leq C_{10}T^{\frac{1}{2}} \left(\mathbb{E} \left[\int_{0}^{T} C_{1} dt'\right]\right)^{\frac{1}{2}}\\
    &=C_{1}^{\frac{1}{2}}C_{10}T.
\end{align*}
Therefore, it follows that,
\begin{align}
    \mathbb{E} (|L^{n}_{3}\left(s_{n}+a)-L^{n}_{3}(s_{n})|_{\mathcal{H}}\right) \leq C_{1}^{\frac{1}{2}}C_{10}^{2}T. \label{L3 estimate}
\end{align}
\\
Let's work out $\mathbb{E} (|L^{n}_{4}\left(s_{n}+a)-L^{n}_{4}(s_{n})|_{\mathcal{H}}\right)$

\begin{align}
    \mathbb{E} (|L^{n}_{4}\left(s_{n}+a)-L^{n}_{4}(s_{n})|_{\mathcal{H}}\right)&=\mathbb{E} \left\vert \int_{s_{n}}^{s_{n}+a} k_{j}(u_{n}(t')))dt' \right\vert_{\mathcal{H}}\nonumber\\
    &\leq \mathbb{E}  \int_{s_{n}}^{s_{n}+a} \left\vert k_{j}(u_{n}(t'))) \right\vert_{\mathcal{H}}dt' \label{kj bfore estimate}
\end{align}
One can see in \cite{brzezniak2020global} that,
\begin{align}
    \left\vert \left\vert k_{j}(u_{n})-k_{j}(v_{n})\right\vert\right\vert \leq C ||\varphi_{j}||^{2} \left[2+ ||u_{n}||^{2}+||v_{n}||^{2} +\left(||u_{n}||+||v_{n}||\right)^{2}\right]||u_{n}-v_{n}||. \label{kj from brz}
\end{align}
Inequality \eqref{kj from brz} with $v_{n}=0$ and Young's Inequality produces,
\begin{align}
\left\vert \left\vert k_{j}(u_{n})\right\vert\right\vert &\leq C ||\varphi_{j}||^{2} \left[2+ ||u_{n}||^{2}+||u_{n}||^{2}\right]||u_{n}|| \nonumber\\
&=C ||\varphi_{j}||^{2} \left[2||u_{n}||+ 2||u_{n}||^{3}\right] \nonumber\\
&=2C ||\varphi_{j}||^{2}||u_{n}||+ 2C ||\varphi_{j}||^{2}||u_{n}||^{3} \nonumber\\
&\leq \frac{4C^2||\varphi_{j}||^{4}}{2}+\frac{||u_{n}||^2}{2}+ \frac{4C^2 ||\varphi_{j}||^{4}}{2}+\frac{(||u_{n}||^2)^{3}}{2}\nonumber\\
& \leq 4C^2 ||\varphi_{j}||^{4}+\frac{||u_{n}||^2}{2}+\frac{(||u_{n}||^2)^{3}}{2}, \nonumber
\end{align}
again using Theorem \eqref{Important Estimates} , $\mathcal{V} \hookrightarrow \mathcal{H} $ 
 and $||\varphi_{j}||<\infty $ makes sure the existence of constant say $C_{11}$ such that

\begin{align}
\left\vert k_{j}(u)\right\vert_{\mathcal{H}} \leq C_{11}. \label{kj estimate} 
\end{align}
Inequality \eqref{kj bfore estimate} together with inequality \eqref{kj estimate} results in;
\begin{align}
    \mathbb{E} \left(\left|L^{n}_{4}\left(s_{n}+a\right)-L^{n}_{4}(s_{n})\right|_{\mathcal{H}}\right) \leq a C_{11}. \label{L4 estimate}
\end{align}
\\
Finally, we need to estimate
$\mathbb{E}(|L^{n}_{5}\left(s_{n}+a)-L^{n}_{5}(s_{n})|^2_{\mathcal{H}}\right)$.
With the application of It$\hat{o}$ isometry we have;
\begin{align}
\mathbb{E} (|L^{n}_{5}\left(s_{n}+a)-L^{n}_{5}(s_{n})|^2_{\mathcal{H}}\right)&= \mathbb{E} \left(\left\vert \int_{s_{n}}^{s_{n}+a} \Lambda_{j}(u_{n}(t'))dW_{j} \right\vert^2 \right)\nonumber\\
& \leq  \mathbb{E} \left( \int_{s_{n}}^{s_{n}+a} \left\vert \Lambda_{j}(u_{n}(t'))\right\vert^2 dt'  \right). \label{L5 before Bj of un estimate}
\end{align}
However,
\begin{align}
    \left \vert \left \vert \Lambda_{j}(u_{n})-B_j(v_{n})\right\vert\right\vert \leq ||\varphi_{j}||\left(||u_{n}||+||v_{n}||\right)||u_{n}-v_{n}||. \nonumber
\end{align}
Setting $v=0$ and making use of Theorem\eqref{Important Estimates} derives,
\begin{align}
    &\left \vert \left \vert \Lambda_{j}(u_{n})-B_j(0)\right\vert\right\vert \leq ||\varphi_{j}||||u_{n}||^{2}\nonumber\\
    &\left \vert \left \vert \Lambda_{j}(u_{n})-\varphi_{j}\right\vert\right\vert \leq ||\varphi_{j}||||u_{n}||^{2}\leq ||\varphi_{j}|| C_{1}\label{B of u nj estimate}.
\end{align}
Because $\mathcal{V} \hookrightarrow \mathcal{H}$, so 
\begin{align}
    \left\vert \Lambda_{j}(u_{n})\right\vert_{\mathcal{H}} &\leq  C\left\vert \left\vert \Lambda_{j}(u_{n})\right\vert\right\vert, \nonumber\\
    &=C\left\vert \left\vert \Lambda_{j}(u_{n})-\varphi_{j}+\varphi_{j}\right\vert\right\vert,\nonumber\\
    &\leq C\left\vert \left\vert \Lambda_{j}(u_{n})-\varphi_{j}\right\vert\right\vert+C\left\vert \left\vert \varphi_{j}\right\vert\right\vert. \label{Bu minus fj estimate}
\end{align}
manipulating equation \eqref{B of u nj estimate} and \eqref{Bu minus fj estimate} with $||\varphi_{j}||<\infty$ it follows that,
\begin{align}
    \left\vert \Lambda_{j}(u_{n})\right\vert_{\mathcal{H}} \leq CC_{1}||\varphi_{j}||+C||\varphi_{j}||:=C_{12}. \label{Bj of an estimate in H}
\end{align}
merging \eqref{Bj of an estimate in H} in \eqref{L5 before Bj of un estimate} concludes
\begin{align}
    \mathbb{E} \left(|L^{n}_{5}(s_{n}+a)-L^{n}_{5}(s_{n})|^2_{\mathcal{H}}\right)\leq  \mathbb{E}\left( \int_{s_{n}}^{s_{n}+a} C_{12}^2dt'\right)=C_{12}^2a. \label{L5 estimate}
\end{align}
Fix $\kappa, e >0$. Then using Chebyshev's inequality along with estimates \eqref{L2 estimate}, \eqref{L3 estimate}, \eqref{L4 estimate} and \eqref{L5 estimate}, results in 
\begin{align}
    \mathbb{P}(|L_{i}^n(s_n+a)-L_{i}^n(s_n)|_{\mathcal{H}} \geq \kappa) \leq \frac{1}{\kappa} \mathbb{E}\left[ |L_{i}^n(s_n+a)-L_{i}^n(s_n)|_{\mathcal{H}} \right] \leq \frac{c_ia}{\kappa},\nonumber
\end{align}
with $n \in \mathbb{N}$, $i=1,2,...,5$. Let $\delta_{i}=\frac{\kappa}{c_i}c$. Then 
\begin{align}
    \sup_{n \in N } \sup_{0\leq a \leq \delta_{i}} \mathbb{P}(|L_{i}^n(s_n+a)-L_{i}^n(s_n)|_{\mathcal{H}} \geq \kappa) \leq c, \nonumber
\end{align}
With the help of Chebyshev's inequality,
\begin{align}
    \mathbb{P}(|L_{5}^n(s_n+a)-L_{5}^n(s_n)|_{\mathcal{H}} \leq\frac{1}{\kappa^2}\mathbb{E}\left[ |L_{i}^n(s_n+a)-L_{i}^n(s_n)| \right] \kappa) \leq \frac{c_5a}{\kappa^2} \nonumber
\end{align}
Hence,
\begin{align}
    \sup_{n \in \mathbb{N} } \sup_{0\leq a \leq \delta_{i}} \mathbb{P}\left(\left|L_{5}^n(s_n+a)-L_{5}^n(s_n)\right|_{\mathcal{H}} \geq \kappa\right) \leq c, \nonumber
\end{align}
Since the Aldous condition holds for each of $L_{i}^n$, we conclude that it is also true for $u_n$. Thus, proof concluded.\end{proof}

\section{Convergence of Quadratic Variations} 
Before we apply the Martingale Representation Theorem, we require some convergence results, which we state and prove in this section.
With the help of Theorem \eqref{set of measures tight thm}. We have $(u_{n_{i}})_{i \in N}$ with probability measure space $(\hat{\Omega},\hat{F},
\hat{\mathbb{P}})$ in a manner that we have $\mathcal{Y}_{T}$ valued random variables $\hat{v}$ and $v_{n_{i}}, i\geq 1 $ with property that \\
\begin{align} \label{same law property for w and u}
 v_{n_{i}} \text{ and }  u_{n_{i}} \text{ have same distribution and }  v_{n_{i}} \text{ converge to } v \text{ in } \mathcal{Y}_{T} \quad\hat{\mathbb{P}}-\text{a.s}.   
\end{align}
\\ 
That means,
\begin{align*}
    &v_{n_{i}} \rightarrow  v \text{  in  } C\left([0,T],\mathcal{L}^{2}\right),\\
    &v_{n_{i}} \rightarrow  v \text{  in  } \mathcal{L}^2\left([0,T],D(\Delta)\right),\\
    &v_{n_{i}} \rightarrow  v \text{  in  } \mathcal{L}^2\left([0,T],\mathcal{V}\right),\\
    &v_{n_{i}} \rightarrow  v \text{  in  } C\left([0,T],\mathcal{V}_{w}\right).
\end{align*}
Denoting $v_{n_{i}}$ again as $v_{n}$.
Because 
\begin{itemize}
    \item $u_{n}$ belongs to $ \mathcal{C}([0,T], F_{n}) \quad \mathbb{P}-a.s.$
    \item The subset $ \mathcal{C}([0,T], F_{n}) \subset \mathcal{C}([0,T], \mathcal{H})\cup \mathcal{L}^2([0,T];\mathcal{V}) $ is Borel.
    \item $v_{n}$ and $u_{n}$ follow the same distribution.
\end{itemize}
Therefore, for all $n \geq 1$,
\begin{align*}
    \mathcal{L} (v_{n})(C ([0,T];F_n) &= 1, \;  \text{ i.e. }\\
    |v_{n}(t)|_{F_{n}}&=|u_{n}(t)|_{F_{n}}, \; \mathbb{P}-a.s.
\end{align*}
Using \eqref{same law property for w and u} we have $v_{n} \rightarrow v$ in $\mathcal{C}([0,T];\mathcal{L}^{2})$ and also from Theorem \eqref{Invarience in Finite dimension}, $u_{n}(t) \in \mathcal{M} $ and hence,
\begin{align}
    v_{n}(t) \in \mathcal{M}. \nonumber
\end{align}
With the help of Theorem \eqref{Important Estimates}, we have, 
\begin{align}
    \sup_{n\geq 1} \mathbf{\hat{\mathbb{E}}} \left(\sup_{s \in [0,T]}||v_{n}(t')||^{2}\right) \leq C_{1},\label{vn bound in V} \\
    \sup_{n\geq 1} \mathbf{\hat{\mathbb{E}}} \left(\sup_{s \in [0,T]}|v_{n}(t')|^{2n}_{{L^{2n}}}\right) \leq C_{2}, \label{vn bound in L2n} \\
    \sup_{n\geq 1}\mathbf{\hat{\mathbb{E}}}\left(\int_{0}^{T} |v_{n}(t')|^{2}_{\mathcal{E}}dt'\right) \leq C_{3}.\label{vn bound in Domain of A}
\end{align}

Keeping \eqref{vn bound in Domain of A} in the head, there exists a weakly convergent subsequence with the same label $v_{n}$ of $v_{n}$ in $\mathcal{L}^2([0,T]\times \hat{\Omega}; D(\Delta))$.
By \eqref{same law property for w and u} 
$ v_{n} \rightarrow v \text{ in } \mathcal{Y_{T}} ,\hat{\mathbb{P}}-$a.s, i.e.,
\begin{align}\label{v finite in E}
    \mathbf{\hat{\mathbb{E}}}\left(\int_{0}^{T} |v(t')|^{2}_{\mathcal{E}}dt'\right) < \infty.
\end{align}
Similarly, in the view of \eqref{vn bound in V} we have weak star convergent subsequence $v_{n}$ such that 
\begin{align}\label{fIniteness of v in V}
    \mathbf{\hat{\mathbb{E}}} \left(\sup_{0 \leq s \leq T}||v(t')||^{2}\right) < \infty. 
\end{align}
For each $n \geq 1$, we define a process $\hat{M}_{n}$
having paths in $C ([0,T];F_{n})$, and in
$C ([0,T];\mathcal{L}^{2})$ by

\begin{align}\label{hatM_n of v_n}
    \hat{M}_{n}&=v_{n}(t)-v_{n}(0) -\int_{0}^{t} \Delta v_{n}(t') dt'- \int_{0}^{t} F(v_{n}(t')) dt' -\frac{1}{2}\sum_{j=1}^{N} \int_{0}^{t} k_{j}(v_{n}(t')))dt'-\sum_{j=1}^{N} \int_{0}^{t} \Lambda_{j}(v_{n}(t')dW_{j}
\end{align}

\begin{lemma} \label{Quadratic Variation}
    Let $\left \langle\left \langle \hat{M_{n}} \right \rangle\right \rangle_t$ be the quadratic Variation of $\hat{M_{n}}$ then 
\begin{align}
    \left \langle\left \langle \hat{M_{n}} \right \rangle\right \rangle_t=\int_{0}^{t} (d\hat{M})^2=\sum_{j=1}^{N} \int_{0}^{t} |\Lambda_{j}(v_{n}(t'))|^{2} dt'. \nonumber
\end{align}
And $\hat{M_{n}}$ is a martingale which is also square integrable.
\end{lemma}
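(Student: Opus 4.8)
The plan is to exploit the equality of laws \eqref{same law property for w and u} in order to transport onto $v_n$ the martingale structure that is already available for $u_n$ on the original stochastic basis. On that space the It$\hat{o}$ Galerkin equation \eqref{Faedo in Ito} shows that the drift-subtracted process $M_n(t):=u_n(t)-u_n(0)-\int_0^t[\Delta u_n(t')+F(u_n(t'))+\frac{1}{2}\sum_j k_j(u_n(t'))]\,dt'$ coincides with the stochastic integral $\sum_{j=1}^N\int_0^t\Lambda_j(u_n(t'))\,dW_j$. Since $\hat M_n$ in \eqref{hatM_n of v_n} is obtained from $v_n$ by exactly the same path functional, it suffices to establish both assertions for $M_n$ and then carry them over by the distributional identity $v_n\stackrel{d}{=}u_n$ on $\mathcal{Y}_T$.

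For $M_n$ itself the statements are standard facts about an $F_n$-valued stochastic integral. Using the linear-growth bound \eqref{Bj of an estimate in H} on $\Lambda_j$ together with Theorem \eqref{Important Estimates}, I would first verify the integrability $\mathbb{E}\int_0^T\sum_j|\Lambda_j(u_n(t'))|^2_{\mathcal{H}}\,dt'<\infty$. This guarantees that $M_n\in\mathbb{M}_T^2$ and that its scalar quadratic variation is $\langle\langle M_n\rangle\rangle_t=\sum_j\int_0^t|\Lambda_j(u_n(t'))|^2\,dt'$; equivalently, for every $\phi\in\mathcal{V}$ the process $\langle M_n(t),\phi\rangle$ is a real martingale, and the compensated process $|M_n(t)|^2-\sum_j\int_0^t|\Lambda_j(u_n(t'))|^2\,dt'$ is a martingale as well.

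The transfer step is the crux. I would encode both the martingale property and the quadratic-variation identity as the vanishing of expectations of bounded path functionals: for all $0\le s\le t$, all $\phi\in\mathcal{V}$, and all bounded continuous $h$ on $C([0,s];\mathcal{H})$, one has $\mathbb{E}[h(u_n|_{[0,s]})\,\langle M_n(t)-M_n(s),\phi\rangle]=0$, and the analogous identity with the expression $|M_n(t)|^2-|M_n(s)|^2-\sum_j\int_s^t|\Lambda_j(u_n(t'))|^2\,dt'$ replacing $\langle M_n(t)-M_n(s),\phi\rangle$. Because each bracket is a fixed measurable functional of the path and $v_n,u_n$ share the same law on $\mathcal{Y}_T$, these expectations are unchanged upon replacing $u_n$ by $v_n$ and $M_n$ by $\hat M_n$. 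This yields simultaneously that $\hat M_n$ is a martingale with respect to its own generated filtration and that $\langle\langle\hat M_n\rangle\rangle_t=\sum_j\int_0^t|\Lambda_j(v_n(t'))|^2\,dt'$, while the uniform bound \eqref{vn bound in V} together with \eqref{Bj of an estimate in H} gives $\hat M_n\in\mathbb{M}_T^2$.

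The main obstacle I anticipate is making this transfer rigorous: being a martingale is a property of the joint law of the path relative to a filtration, not of the one-dimensional marginals, so the argument must run through the cylinder-functional formulation above, and one must verify that the functionals involved are genuinely bounded and $\hat{\mathbb{P}}$-almost surely continuous on $\mathcal{Y}_T$, so that equality of laws forces equality of the corresponding expectations. The square integrability needed to legitimise these manipulations — in particular uniform control of $\Lambda_j(v_n)$ — is supplied by \eqref{Bj of an estimate in H} and \eqref{vn bound in V}, and some care is required to check that the time integrals entering $\hat M_n$ are well defined on the relevant components of $\mathcal{Y}_T$ so that $\hat M_n$ has the same distribution as the stochastic integral $M_n$.
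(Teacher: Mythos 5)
Your proposal is correct and follows essentially the same route as the paper: the paper's proof likewise encodes the martingale property and the quadratic variation as the two cylinder-functional identities \eqref{firt req for quad limit pass} and \eqref{2nd req for quad limit pass} and transfers them from $u_n$ to $v_n$ by equality of laws. In fact your write-up is more complete than the paper's, which simply asserts those two identities without first establishing them for the stochastic integral $M_n$ on the original basis or addressing the measurability and integrability of the functionals, both of which you handle explicitly.
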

\begin{proof}
Because $u_n$ and $v_n$ have same laws, so for each $t_1,t_2 \in [0,T], t_1 \leq t_2$, for each $h$ from $\mathcal{C}([0,T],\mathcal{H})$, and for all $\phi_1, \phi_2 \in \mathcal{H}$, one obtains,
\begin{align}
    \hat{\mathbb{E}}\left[ \left \langle \hat{M}_{n}(t_1)-\hat{M}_{n}(t_2) , \phi_1 \right \rangle_{\mathcal{H}}h(v_{n|[0,t_2]}) \right]=0 \label{firt req for quad limit pass}
\end{align}
and
\begin{align}
    &\hat{\mathbb{E}} \left[ \left( \left \langle \hat{M}_{n}(t_1), \phi_1 \right \rangle_{\mathcal{H}}\left \langle \hat{M}_{n}(t_1), \phi_2 \right \rangle_{\mathcal{H}}- \left \langle \hat{M}_{n}(t_2), \phi_1 \right \rangle_{\mathcal{H}}\left \langle \hat{M}_{n}(t_2), \phi_2 \right \rangle_{\mathcal{H}}\right)h(v_{n|[0,t_2]})\right] \nonumber \\ &-\hat{\mathbb{E}}\left[\left( \sum_{j=1}^{m} \int_{t_2}^{t_1} \left \langle \Lambda_{j}(v_{n}(\tau)) \phi_1, \Lambda_{j}(v_{n}(\tau)) \phi_2 \right \rangle_{\mathcal{H}} d\tau \right) \cdot h(v_{n|[0,t_2]}). \right]=0. \label{2nd req for quad limit pass}
\end{align}
This ends the proof.
\end{proof}


\begin{lemma} \label{continuity of M hat}
    Consider the following process 
    \begin{align}
        \hat{M}(t)=v(t)-v(0)-\int_{0}^{t} \Delta v(t') dt'- \int_{0}^{t} F(v(t')) dt' -\frac{1}{2}\sum_{j=1}^{N} \int_{0}^{t} k_{j}(v(t')))dt'-\sum_{j=1}^{N} \int_{0}^{t} \Lambda_{j}(v(t')dW_{j}, \nonumber
    \end{align}
    the process $\hat{M}$ is a continuous process with values in $\mathcal{H}$. 
\end{lemma}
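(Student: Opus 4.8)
The plan is to write $\hat{M}$ as a finite sum of terms and to prove that each term is, separately, a continuous $\mathcal{H}$-valued process; since a finite sum of continuous $\mathcal{H}$-valued processes is again continuous in $\mathcal{H}$, the lemma will follow. Concretely I would treat the increment $v(t)-v(0)$, the three Bochner integrals involving $\Delta v$, $F(v)$ and $k_j(v)$, and the stochastic integral $\sum_{j=1}^{N}\int_0^t \Lambda_j(v(t'))\,dW_j$ as five independent pieces and dispatch them one at a time.

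For the first piece, recall that the limit $v$ inherits from \eqref{same law property for w and u} the path regularity $v(\cdot)\in C([0,T],\mathcal{L}^2)=C([0,T],\mathcal{H})$, so $t\mapsto v(t)-v(0)$ is already continuous in $\mathcal{H}$. For the deterministic integrals I would argue in each case that the integrand lies in $L^1([0,T],\mathcal{H})$ $\hat{\mathbb{P}}$-a.s., and then invoke the elementary fact that the indefinite Bochner integral of an $L^1([0,T],\mathcal{H})$ map is (absolutely) continuous with values in $\mathcal{H}$. For $\Delta v$ this follows from \eqref{v finite in E}: since $v\in L^2([0,T],\mathcal{E})$ a.s.\ and $\Delta\colon\mathcal{E}\to\mathcal{H}$ is bounded linear, $\Delta v\in L^2([0,T],\mathcal{H})\subset L^1([0,T],\mathcal{H})$. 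For $F(v)$ I would use the growth bound \eqref{estimate for G(u)}, namely $|F(v)|_{\mathcal{H}}\le H_1(\|v\|,0)\|v\|$, together with the finiteness \eqref{fIniteness of v in V} and the polynomial form of $H_1$, to conclude $\int_0^T|F(v(t'))|_{\mathcal{H}}\,dt'\le T\big(\sup_{s}\|v(s)\|\big)^{2n+1}\cdot\mathrm{const}<\infty$ a.s. Likewise the polynomial bound underlying \eqref{kj estimate} controls $|k_j(v)|_{\mathcal{H}}$ by a polynomial in $\|v\|$, which with \eqref{fIniteness of v in V} gives $k_j(v)\in L^1([0,T],\mathcal{H})$ a.s. Each indefinite integral is therefore continuous in $\mathcal{H}$.

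The remaining and genuinely delicate piece is the stochastic term $t\mapsto\sum_{j=1}^N\int_0^t \Lambda_j(v(t'))\,dW_j$. Here I would first verify that $\Lambda_j(v)$ is an admissible integrand: progressive measurability with respect to the filtration constructed on the new probability space, together with square integrability. The latter is immediate from the uniform bound \eqref{Bj of an estimate in H}, $|\Lambda_j(v)|_{\mathcal{H}}\le C_{12}$, which yields $\hat{\mathbb{E}}\int_0^T|\Lambda_j(v(t'))|_{\mathcal{H}}^2\,dt'\le N C_{12}^2 T<\infty$. Granting admissibility, the standard theory of $\mathcal{H}$-valued It\^o integrals guarantees a modification with continuous paths in $\mathcal{H}$. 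I expect the main obstacle to be precisely this admissibility check, rather than the $\mathcal{H}$-valued integrability handed to us by the a priori estimates: because $v$ is obtained only as an almost sure limit of the $v_n$, one must identify the correct filtration on $(\hat{\Omega},\hat{F},\hat{\mathbb{P}})$ and transfer progressive measurability (and the driving Brownian motions) from the approximating sequence to the limit. Once all five pieces are established as continuous $\mathcal{H}$-valued processes, their sum $\hat{M}$ is continuous in $\mathcal{H}$, completing the proof.
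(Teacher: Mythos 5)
Your decomposition and the treatment of the deterministic terms match the paper's proof almost exactly: the paper likewise reduces to showing each integrand is integrable, using \eqref{v finite in E} for $\Delta v$, the bound $|F(v)|_{\mathcal{H}}\le H_{1}(\|v\|,0)\|v\|$ together with \eqref{fIniteness of v in V} for $F(v)$, and the uniform bound behind \eqref{kj estimate} for $k_{j}(v)$, so that each indefinite Bochner integral is continuous in $\mathcal{H}$. Where you genuinely diverge is the stochastic term: the paper disposes of $\sum_{j}\int_{0}^{t}\Lambda_{j}(v(t'))\,dW_{j}$ merely by observing that its expectation vanishes, which says nothing about path continuity, whereas you correctly identify that one must check admissibility of the integrand (progressive measurability with respect to the filtration on the new probability space, plus square integrability via \eqref{Bj of an estimate in H}) and then invoke the standard fact that an $\mathcal{H}$-valued It\^o integral of such an integrand admits a continuous modification. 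Your version is therefore more complete precisely at the point where the paper's argument is weakest; the flagged issue of transferring progressive measurability and the driving Brownian motions to the limit $v$ is a real gap in the paper that your outline at least names, even if neither text fully resolves it here.
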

\begin{proof}
Keep in view that  $ v \in \mathcal{C}([0,T],\mathcal{V})$ therefore it is enough to establish the following,
\begin{align*}
    \left|\int_{0}^{t} \Delta v(t') dt'- \int_{0}^{t} F(v(t')) dt' -\frac{1}{2}\sum_{j=1}^{N} \int_{0}^{t} k_{j}(v(t')))dt'-\sum_{j=1}^{N} \int_{0}^{t} \Lambda_{j}(v(t')dW_{j}\right|_{\mathcal{L}^2} < \infty.
\end{align*}
For this purpose, we will deal with each term separately.\\
At first, we consider $\int_{0}^{t} | \Delta v(t') |_{\mathcal{L}^2} dt'$, with the help of Cauchy-Schwartz inequality we obtain
\begin{align*}
    \hat{\mathbb{E}}\left( \int_{0}^{t} | \Delta v(t') |_{\mathcal{L}^{2}} dt' \right)&\leq \hat{\mathbb{E}}\left( \int_{0}^{T} | \Delta v(t')|_{\mathcal{L}^{2}} dt' \right)\\
    & \leq  T^{\frac{1}{2}}\hat{\mathbb{E}}\left( \int_{0}^{T} | \Delta v(t')|^{2}_{\mathcal{L}^{2}} dt' \right)\\
    &\leq T^{\frac{1}{2}}\hat{\mathbb{E}}\left( \int_{0}^{T} | v(t')|^{2}_{\mathcal{E}} dt' \right)
\end{align*}
The above inequality in the view of \eqref{v finite in E} results in 
\begin{align}\label{v bound in E }
    \hat{\mathbb{E}}\left( \int_{0}^{t} | \Delta v(t') |_{\mathcal{L}^{2}} dt' \right) < \infty. 
\end{align}
Next, we see;
\begin{align*}
     \hat{\mathbb{E}}\left(\int_{0}^{t} \left|F(v(t'))\right|_{\mathcal{L}^2} dt'\right)&\leq  \hat{\mathbb{E}} \left( \int_{0}^{T} H_{1}\left(\left\vert \right\vert v(t') \left\vert \right\vert ,0 \right)  \left\vert\right\vert v(t') \left \vert \right\vert dt'\right)\\
     & \leq \hat{\mathbb{E}} \left( \int_{0}^{T} C_{10}  \left\vert\right\vert v(t') \left \vert \right\vert dt'  \right)\\
     &\leq C_{10} T^{\frac{1}{2}} \hat{\mathbb{E}} \left( \int_{0}^{T} \left\vert\right\vert v(t') \left \vert \right\vert^{2} dt'  \right)^{\frac{1}{2}}.
\end{align*}
Using \eqref{fIniteness of v in V}, we conclude 

\begin{align}\label{F(v) bound in L2}
    \hat{\mathbb{E}}\left(\int_{0}^{t} \left|F(v(t'))\right|_{\mathcal{L}^2} dt'\right) < \infty.
\end{align}
Then we also have the following,
\begin{align}\label{kj(v) bound in L2}
    \hat{\mathbb{E}}\left(\int_{0}^{t} |k_{j}(v(t')))| dt'\right) \leq  \hat{\mathbb{E}} \left( \int_{0}^{T} C_{11} dt' \right) .=C_{11}T<\infty
\end{align}
Finally, because $\int_{0}^{t} \Lambda_{j}(v(t'))dW_{j}$ is stochastic integral so
\begin{align}\label{Bj(v) bound}
    \hat{\mathbb{E}}\left(\int_{0}^{t} \Lambda_{j}(v(t'))dW_{j}\right)=0.
\end{align}
Combining \eqref{v bound in E }, \eqref{F(v) bound in L2}, \eqref{kj(v) bound in L2} and \eqref{Bj(v) bound} we get 

\begin{align*}
    \left|\int_{0}^{t} \Delta v(t') dt'- \int_{0}^{t} F(v(t')) dt' -\frac{1}{2}\sum_{j=1}^{N} \int_{0}^{t} k_{j}(v(t')))dt'-\sum_{j=1}^{N} \int_{0}^{t} \Lambda_{j}(v(t')dW_{j}\right|_{\mathcal{L}^2} < \infty.
\end{align*}
Proof gets completed.
\end{proof}
\begin{theorem} \label{convergence in Ehat}
    For $s,t \in [0,T]$ with $s \leq t$ and $\phi \in \mathcal{H}$, the following are true:
\begin{align*}
&1. \;\lim_{n \rightarrow \infty } \left \langle v_{n}(t), Q_{n}(\phi) \right \rangle_{\mathcal{H}}=\left \langle v(t) , \phi \right \rangle_{\mathcal{H}},\; \hat{\mathbb{P}}-a.s. \,. \\
&2.\; \lim_{n \rightarrow \infty } \int_{s}^{t} \left  \langle A v_{n}(h), Q_{n}(\phi)  \right \rangle_{\mathcal{H}} dh=\int_{s}^{t} \left  \langle A v(h),\phi  \right \rangle_{\mathcal{H}},\; \hat{\mathbb{P}}-a.s.\, .\\
&3.\; \lim_{n \rightarrow \infty } \int_{s}^{t} \left \langle  F(v_{n}(h)) , Q_{n}(\phi) \right \rangle_{\mathcal{H}} dh = \int_{s}^{t} \left \langle F(v(h)), Q(\phi) \right \rangle_{\mathcal{H}} dh, \; \; \hat{\mathbb{P}}-a.s.\; \text{ for }\phi \in \mathcal{H}.  \\
&4.\; \lim_{n \rightarrow \infty } \int_{s}^{t} \left \langle  k_{j}(v_{n}(h)) , Q_{n}(\phi) \right \rangle_{\mathcal{H}} dh = \int_{s}^{t} \left \langle k_{j}(v(h)), Q(\phi) \right \rangle_{\mathcal{H}} dh,\; \; \hat{\mathbb{P}}-a.s.\, . \\
&5.\; \lim_{n \rightarrow \infty } \int_{s}^{t} \left \langle  \Lambda_{j}(v_{n}(h)) , Q_{n}(\phi) \right \rangle_{\mathcal{H}} dW_{j}(h) = \int_{s}^{t} \left \langle \Lambda_{j}(v(h)), \phi \right \rangle_{\mathcal{H}} dW_{j}(h) ,\; \; \hat{\mathbb{P}}-a.s.\, .
\end{align*}
\end{theorem}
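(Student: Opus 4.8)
The plan is to establish the five limits separately, according to whether the term is linear, deterministic-nonlinear, or stochastic, in each case combining one of the available modes of convergence of $v_n$ to $v$ with the elementary fact that $Q_n(\phi)\to\phi$ in $\mathcal{H}$ for every $\phi\in\mathcal{H}$ (since $\{e_k\}$ is an orthonormal basis and $|Q_n(\phi)|_{\mathcal{H}}\le|\phi|_{\mathcal{H}}$). Throughout I work $\hat{\mathbb{P}}$-a.s. and use that, by the convergence $v_n\to v$ in $C([0,T],\mathcal{V}_w)$ together with the Banach--Steinhaus theorem, the quantity $M:=\sup_{n}\sup_{t\in[0,T]}\|v_n(t)\|$ is finite $\hat{\mathbb{P}}$-a.s.; indeed the family $\{v_n(t)\}$ is weakly bounded in $\mathcal{V}$, hence norm-bounded.

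For (1) I would split
\begin{align*}
\langle v_n(t),Q_n(\phi)\rangle_{\mathcal{H}}-\langle v(t),\phi\rangle_{\mathcal{H}}=\langle v_n(t)-v(t),Q_n(\phi)\rangle_{\mathcal{H}}+\langle v(t),Q_n(\phi)-\phi\rangle_{\mathcal{H}},
\end{align*}
bounding the first summand by $|v_n(t)-v(t)|_{\mathcal{H}}\,|\phi|_{\mathcal{H}}$ (which vanishes by $v_n\to v$ in $C([0,T],\mathcal{H})$) and the second by $|v(t)|_{\mathcal{H}}\,|Q_n(\phi)-\phi|_{\mathcal{H}}\to0$. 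For (2) the key observation is that $A$ preserves each $F_n$ (the $e_k$ are its eigenvectors) and $v_n(h)\in F_n$, so $Q_nAv_n=Av_n$ and therefore $\langle Av_n(h),Q_n(\phi)\rangle_{\mathcal{H}}=\langle Q_nAv_n(h),\phi\rangle_{\mathcal{H}}=\langle Av_n(h),\phi\rangle_{\mathcal{H}}$; the assertion then reduces to $\int_s^t\langle Av_n(h),\phi\rangle\,dh\to\int_s^t\langle Av(h),\phi\rangle\,dh$, which is exactly the action of the bounded linear functional $w\mapsto\int_s^t\langle Aw(h),\phi\rangle_{\mathcal{H}}\,dh$ on the weakly convergent sequence $v_n\rightharpoonup v$ in $L^2_w([0,T];D(\Delta))$.

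For the deterministic nonlinear terms (3) and (4), I would again separate the $Q_n(\phi)-\phi$ piece (handled as in (1)) and estimate the remaining piece using the Lipschitz-type bounds recalled above, namely $|F(v_n)-F(v)|_{\mathcal{H}}\le H_1(\|v_n\|,\|v\|)\|v_n-v\|$ and the analogous inequality \eqref{kj from brz} for $k_j$. Since $M<\infty$ a.s., the factors $H_1(\|v_n\|,\|v\|)$ and the corresponding polynomial in $\|v_n\|,\|v\|$ are bounded by an $\omega$-dependent constant $C(\omega)$, so after a Cauchy--Schwarz step the relevant integral is dominated by $C(\omega)\,T^{1/2}\|v_n-v\|_{\mathcal{L}^2([0,T],\mathcal{V})}$, which tends to $0$ by the strong convergence $v_n\to v$ in $\mathcal{L}^2([0,T],\mathcal{V})$.

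Assertion (5) is the delicate one and I expect it to be the main obstacle, since a.s. convergence of stochastic integrals cannot be obtained pathwise. The plan is to control the integrand first: using $\|\Lambda_j(v_n)-\Lambda_j(v)\|\le\|\varphi_j\|(\|v_n\|+\|v\|)\|v_n-v\|$ together with $|Q_n(\phi)-\phi|_{\mathcal{H}}\to0$, the a.s. bound $M$, and $v_n\to v$ in $\mathcal{L}^2([0,T],\mathcal{V})$, one obtains
\begin{align*}
\int_s^t\left|\langle\Lambda_j(v_n(h)),Q_n(\phi)\rangle_{\mathcal{H}}-\langle\Lambda_j(v(h)),\phi\rangle_{\mathcal{H}}\right|^2dh\longrightarrow0
\end{align*}
$\hat{\mathbb{P}}$-almost surely, hence in probability. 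By the continuity of the It\^o integral in probability with respect to its integrand, the stochastic integrals then converge in probability, and extracting a subsequence (relabelled, as is done throughout) upgrades this to the asserted $\hat{\mathbb{P}}$-a.s. convergence. The subtlety to flag is that the uniform bound $M$ is only a.s. finite and need not be controlled in $L^2(\hat{\Omega})$, which is precisely why the passage is routed through convergence in probability rather than through a direct $L^2(\hat{\Omega})$ estimate via the It\^o isometry.
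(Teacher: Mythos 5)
Your proposal is correct overall, and parts (1), (3) and (4) follow essentially the same route as the paper: the same splitting into a $\langle\,\cdot\,, Q_n\phi-\phi\rangle$ term and a difference term, the same Lipschitz bounds $|F(v_n)-F(v)|_{\mathcal H}\le H_1(\|v_n\|,\|v\|)\|v_n-v\|$ and \eqref{kj from brz}, Cauchy--Schwarz, and the strong convergence $v_n\to v$ in $\mathcal L^2([0,T],\mathcal V)$. Where you genuinely diverge is in (2) and (5). For (2) the paper keeps the term $\int\langle Av_n, Q_n\phi-\phi\rangle\,dh$ and controls it via the uniform $\mathcal L^2([0,T],D(A))$ bound, while rewriting $\langle A(v_n-v),\phi\rangle_{\mathcal H}$ as a $D(A)$-pairing with $A^{-1}\phi$; your observation that $Av_n(h)\in F_n$ so that $\langle Av_n,Q_n\phi\rangle=\langle Av_n,\phi\rangle$ kills the second term outright and reduces everything to testing the weak convergence in $L^2_w([0,T];D(\Delta))$ against one bounded functional --- cleaner, same underlying ingredient. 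For (5) the paper squares, applies the It\^o isometry, and estimates $\hat{\mathbb E}(A_1^2)$ and $\hat{\mathbb E}(A_2^2)$ directly, i.e.\ it aims at $L^2(\hat\Omega)$ convergence; you instead prove pathwise $L^2([s,t])$ convergence of the integrands and invoke continuity in probability of the stochastic integral. Your route is more careful about the interchange of limit and expectation that the paper performs without comment in the $A_1$ estimate (that step can in fact be justified because $v_n(t),v(t)\in\mathcal M$ gives the deterministic bound $|v_n-v|_{C([0,T],\mathcal H)}\le 2$, but the paper does not say so). Both routes share the same residual gap: they deliver convergence in probability (or in $L^2(\hat\Omega)$) of the stochastic integrals, so the $\hat{\mathbb P}$-a.s.\ convergence asserted in item (5) only holds along a further subsequence --- you flag this explicitly, the paper does not. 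Both arguments also tacitly assume that $W_j$ is a Wiener process, and $v_n$, $v$ are adapted, with respect to a common filtration on the new space $(\hat\Omega,\hat{\mathcal F},\hat{\mathbb P})$, a point worth making explicit if you write this up.
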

\begin{proof}
By \eqref{same law property for w and u}, we have 
\begin{align}
    v_{n} \rightarrow v \text{ in } \mathcal{Y}_T .\nonumber
\end{align}
Since 
\begin{equation*}
    v_{n} \rightarrow v \text{ in } \mathcal{C}([0,T], \mathcal{H}), \; \hat{\mathbb{P}}-a.s
\end{equation*}
and 
\begin{equation*}
    Q_{n}(\phi) \rightarrow \phi \; \text{ in } \mathcal{H}. 
\end{equation*}
we get ;
\begin{align*}
    \lim_{n \rightarrow \infty } \left \langle v_{n}(t) , Q_{n} (\phi) \right \rangle - \left \langle v(t), \phi \right \rangle= \lim_{n \rightarrow \infty } \left \langle v_{n}(t)-v(t) , Q_{n} (\phi) \right \rangle+ \lim_{n \rightarrow \infty } \left \langle v(t) , Q_{n} (\phi)-\phi \right \rangle=0, \; \hat{\mathbb{P}}-a.s
\end{align*}
This proves inference (1).\\
Pick $\phi \in \mathcal{H}$
\begin{align*}
    &\int_{s}^{t} \left \langle A(v_{n}(h)) , Q_{n} (\phi) \right \rangle_{\mathcal{H}} dh - \int_{s}^{t} \left \langle A(v(h)) , \phi \right \rangle_{\mathcal{H}} dh\\ 
    &=   \int_{s}^{t}  \left \langle A(v_{n}(h))-A(v(h)) , \phi \right \rangle_{\mathcal{H}} dh+   \int_{s}^{t}  \left \langle A(v_{n}(h)) ,Q_{n}(\phi) - \phi \right \rangle_{\mathcal{H}} dh\\
    &=\int_{s}^{t}  \left \langle A(v_{n}(h)-v(h)) , \phi \right \rangle_{\mathcal{H}} dh+   \int_{s}^{t}  \left \langle A(v_{n}(h)) ,Q_{n}(\phi) - \phi \right \rangle_{\mathcal{H}} dh\\
    &\leq \int_{s}^{t}  \left \langle v_{n}(h)-v(h), A^{-1}\phi \right \rangle_{D(A)} dh+  \int_{s}^{t}  \left \vert A(v_{n}(h)) \right \vert_{\mathcal{H}} |Q_{n}(\phi) - \phi |_{\mathcal{H}} dh\\
    &\leq \int_{s}^{t}  \left \langle v_{n}(h)-v(h), A^{-1}\phi \right \rangle_{D(A)} dh+  \int_{0}^{T}  \left \vert v_{n}(h) \right \vert_{D(A)} |Q_{n}(\phi) - \phi |_{\mathcal{H}} dh\\
    &\leq \int_{s}^{t}  \left \langle v_{n}(h)-v(h), A^{-1}\phi \right \rangle_{D(A)} dh+  \left( \int_{0}^{T}  \left \vert v_{n}(h) \right \vert_{D(A)}^{2} dh\right)^{\frac{1}{2}} \left(\int_{0}^{T} |Q_{n}(\phi) - \phi |_{\mathcal{H}}^{2} dh \right )^{\frac{1}{2}}\\
    &=\int_{s}^{t}  \left \langle v_{n}(h)-v(h), A^{-1}\phi \right \rangle_{D(A)} dh+  \left( \int_{0}^{T}  \left \vert v_{n}(h) \right \vert_{D(A)}^{2} dh\right)^{\frac{1}{2}} \left(  |Q_{n}(\phi) - \phi |_{\mathcal{H}}^{2} \int_{0}^{T} dh \right )^{\frac{1}{2}}\\
    &=\int_{s}^{t}  \left \langle v_{n}(h)-v(h), A^{-1}\phi \right \rangle_{D(A)} dh+  |v_{n}|_{\mathcal{L}^{2}([0,T], D(A))}) \left(  |Q_{n}(\phi) - \phi |_{\mathcal{H}}^{2} T \right )^{\frac{1}{2}}\\
    &=\int_{s}^{t}  \left \langle v_{n}(h)-v(h), A^{-1}\phi \right \rangle_{D(A)} dh+  |v_{n}|_{\mathcal{L}^{2}([0,T], D(A))}  |Q_{n}(\phi) - \phi |_{\mathcal{H}} T^{\frac{1}{2}}.\\
\end{align*}
Since $v_{n}$, $\hat{\mathbb{P}}$-a.s. converges weakly to limit $v$ in $\mathcal{L}^{2}([0,T], D(A))$,  also $v_{n}$  is uniformly bounded in $\mathcal{L}^{2}([0,T], D(A))$ and $Q_{n}(\phi)$ converges to$ \phi$ in $\mathcal{H}$-norm. This gives
\begin{align*}
    \lim_{n \rightarrow \infty } \int_{s}^{t} \int_{s}^{t}  \left \langle v_{n}(h)-v(h), A^{-1}\phi \right \rangle_{D(A)} dh=0,
\end{align*}
and 
\begin{align*}
    \lim_{n \rightarrow \infty} |Q_{n}(\phi)-\phi|_{\mathcal{H}}=0,
\end{align*}
giving 
\begin{align*}
    \lim_{n \rightarrow \infty }\int_{s}^{t} \left \langle A(v_{n}(h)) , Q_{n} (\phi) \right \rangle_{\mathcal{H}} dh = \int_{s}^{t} \left \langle A(v(h)) , \phi \right \rangle_{\mathcal{H}} dh,
\end{align*}
and (2) gets proved.\\

For (3), suppose $\phi \in \mathcal{V}$
\begin{align*}
    &\int_{s}^{t} \left \langle F(v_{n}(h)), Q_{n}(\phi) \right \rangle_{\mathcal{H}} dh-\int_{s}^{t} \left \langle F(v(h)), \phi \right \rangle_{\mathcal{H}} dh,\\
    &= \int_{s}^{t} \left \langle F(v_{n}(h))-F(v(h)), \phi \right \rangle_{\mathcal{H}} dh+\int_{s}^{t} \left \langle F(v_{n}(h)), Q_{n}(\phi)-\phi \right \rangle_{\mathcal{H}} dh,\\
    &\leq \int_{0}^{T} | F(v_{n}(h))-F(v(h))|_{\mathcal{H}}|\phi|_{\mathcal{H}} dh+\int_{0}^{T} | F(v_{n}(h))|_{\mathcal{H}} |Q_{n}(\phi)-\phi|_{\mathcal{H}} dh.
\end{align*}

From Lipschitz property $H$ and $\mathcal{V} \hookrightarrow \mathcal{H}$ we have,
\begin{align*}
    &\int_{s}^{t} \left \langle F(v_{n}(h)), Q_{n}(\phi) \right \rangle_{\mathcal{H}} dh-\int_{s}^{t} \left \langle F(v(h)), \phi \right \rangle_{\mathcal{H}} dh\\
    &\leq \int_{0}^{T} \left ( H_{1}(||v_{n}(h)||,||v(h)||) ||v_{n}(h)-v(h)||\right)  |\phi|_{\mathcal{H}} dh+\int_{0}^{T}  | F(v_{n}(h))|_{\mathcal{H}} ||Q_{n}(\phi)-\phi|| dh\\
    &\leq \left( \int_{0}^{T} \left ( H_{1}(||v_{n}(h)||,||v(h)||)\right)^{2}dh \right)^{\frac{1}{2}}\cdot  \left( \int_{0}^{T} ||v_{n}(h)-v(h)||^{2}dh \right)^{\frac{1}{2}}\\&+\left(\int_{0}^{T}  | F(v_{n}(h))|_{\mathcal{H}}^{2} dh \right)^{\frac{1}{2}}\cdot \left(\int_{0}^{T} ||Q_{n}(\phi)-\phi||^{2} dh \right)^{\frac{1}{2}} 
\end{align*}
From $v_{n}(h) \rightarrow v(h) $ strongly in $\mathcal{L}^{2}([0,T], \mathcal{V})$, uniform boundedness of $\mathcal{L}^{2}([0,T],\mathcal{V})$  and $Q_{n}(\phi) \rightarrow \phi $ in $\mathcal{V}$ we get
\begin{align*}
    \lim_{n \rightarrow \infty }  &\int_{s}^{t} \left \langle F(v_{n}(h)), Q_{n}(\phi) \right \rangle_{\mathcal{H}} dh-\int_{s}^{t} \left \langle F(v(h)), \phi \right \rangle_{\mathcal{H}} dh=0, \; \; \hat{\mathbb{P}}-a.s.   
\end{align*}
which ends the proof for (3).\\
Let's prove (4), take $\phi \in \mathcal{H}$,
\begin{align*}
    &\int_{s}^{t} \left \langle  k_{j}(v_{n}(h)) , Q_{n}(\phi) \right \rangle_{\mathcal{H}} dh - \int_{s}^{t} \left \langle     k_{j}(v(h)), Q(\phi) \right \rangle_{\mathcal{H}} dh\\
    &=\int_{s}^{t} \left \langle  k_{j}(v_{n}(h))-k_{j}(v(h)) , \phi \right \rangle_{\mathcal{H}} dh - \int_{s}^{t} \left \langle     k_{j}(v_{n}(h)), Q_{n}(\phi)+\phi \right \rangle_{\mathcal{H}} dh,\\
    &\leq \int_{0}^{T} \left \vert k_{j}(v_{n}(h))-k_{j}(v(h))\right \vert_{\mathcal{H}} \left \vert \phi \right\vert_{\mathcal{H}} dh + \int_{0}^{T} \left \vert k_{j}(v_{n}(h)\right \vert_{\mathcal{H}} \left \vert Q_{n}(\phi)-\phi\right \vert_{\mathcal{H}} dh. 
\end{align*}
Using Lipchitz property of $k_{j}(v_{n}(h))$ along with $\mathcal{V} \hookrightarrow \mathcal{H}$ and \eqref{kj estimate} we obtain
\begin{align*}
    &\int_{s}^{t} \left \langle  k_{j}(v_{n}(h)) , Q_{n}(\phi) \right \rangle_{\mathcal{H}} dh - \int_{s}^{t} \left \langle     k_{j}(v(h)), Q(\phi) \right \rangle_{\mathcal{H}} dh\\
    &\leq \int_{0}^{T} C ||\varphi_{j}||^{2} \left[ 2+ ||v_{n}(h)||^{2}+||v(h)||^{2} +\left(||v_{n}(h)||+||v(h)||\right)^{2}\right ]||v_{n}(h)-v(h)|| \left \vert \phi \right\vert_{\mathcal{H}} dh \\ &+\int_{0}^{T} \left \vert k_{j}(v_{n}(h)\right \vert_{\mathcal{H}} C \left \vert \right \vert Q_{n}(\phi)-\phi \left\vert\right \vert dh, \\
    &\leq \left[ \int_{0}^{T} \left(C ||\varphi_{j}||^{2} \left[ 2+ ||v_{n}(h)||^{2}+||v(h)||^{2} +\left(||v_{n}(h)||+||v(h)||\right)^{2}\right ]\right)^{2} \left \vert \phi \right\vert_{\mathcal{H}}^{2} dh  \right]^{\frac{1}{2}} \\ &\left[ \int_{0}^{T} ||v_{n}(h)-v(h)||^{2} dh\right]^{\frac{1}{2}}
    \left[\int_{0}^{T} \left\vert k_{j}(v_{n}(h))\right\vert_{\mathcal{H}}^{2}\right]^{\frac{1}{2}} \cdot  \left[ \int_{0}^{T} \left \vert \right \vert Q_{n}(\phi)-\phi \left\vert\right \vert^{2}dh\right ]^{\frac{1}{2}} 
\end{align*}
By similar argumentation as used in (3),

\begin{align*}
    &\left[ \int_{0}^{T} \left(C ||\varphi_{j}||^{2} \left[ 2+ ||v_{n}(h)||^{2}+||v(h)||^{2} +\left(||v_{n}(h)||+||v(h)||\right)^{2}\right ]\right)^{2} \left \vert \phi \right\vert_{\mathcal{H}}^{2} dh  \right]^{\frac{1}{2}} \\& \left[ \int_{0}^{T} ||v_{n}(h)-v(h)||^{2} dh\right]^{\frac{1}{2}}
    \left[\int_{0}^{T} \left\vert k_{j}(v_{n}(h))\right\vert_{\mathcal{H}}^{2}\right]^{\frac{1}{2}} \cdot  \left[ \int_{0}^{T} \left \vert \right \vert Q_{n}(\phi)-\phi \left\vert\right \vert^{2}dh\right ]^{\frac{1}{2}} \rightarrow 0 
\end{align*}
which shows (4) also hold.

Finally, we will show that (5) also holds; for this, let  $\phi \in \mathcal{H}$ along with following Lipschitz condition \cite{brzezniak2020global}, 
\begin{equation*}
    \|\Lambda_{j}(v_{n}(h)-\Lambda_{j}(v(h)))\|_{\mathcal{H}} \leq |\varphi_{j}|_{\mathcal{H}}\left(|v_{n}(h)|_{\mathcal{H}}+|v(h)|_{\mathcal{H}}\right)\left\vert v_{n}(h)-v(h)\right\vert_{\mathcal{H}}. 
\end{equation*}    

\begin{align*}
  &\int_{s}^{t} \left \langle  \Lambda_{j}(v_{n}(h)) , Q_{n}(\phi) \right \rangle_{\mathcal{H}} dW_{j}(h) -\int_{s}^{t} \left \langle \Lambda_{j}(v(h)), Q(\phi) \right \rangle_{\mathcal{H}} dW_{j}(h)\\
  &= \int_{s}^{t} \left \langle  \Lambda_{j}(v_{n}(h))-\Lambda_{j}(v(h)) , \phi \right \rangle_{\mathcal{H}} dW_{j}(h) +\int_{s}^{t} \left \langle \Lambda_{j}(v_{n}(h)), Q_{n}(\phi)-\phi \right \rangle_{\mathcal{H}} dW_{j}(h)\\
  & \leq \int_{0}^{T} | \Lambda_{j}(v_{n}(h))-\Lambda_{j}(v(h)) |_{\mathcal{H}} | \phi |_{\mathcal{H}} dW_{j}(h) +\int_{0}^{T} | \Lambda_{j}(v_{n}(h)) |_{\mathcal{H}} | Q_{n}(\phi)-\phi |_{\mathcal{H}} dW_{j}(h)\\
  &\leq \int_{0}^{T}  |\varphi_{j}|_{\mathcal{H}}\left(|v_{n}(h)|_{\mathcal{H}}+|v(h)|_{\mathcal{H}}\right)\left\vert v_{n}(h)-v(h)\right\vert_{\mathcal{H}} | \phi |_{\mathcal{H}} dW_{j}(h) +\int_{0}^{T} | \Lambda_{j}(v_{n}(h)) |_{\mathcal{H}} | Q_{n}(\phi)-\phi |_{\mathcal{H}} dW_{j}(h)\\
  &=A_{1}+A_{2}
\end{align*}
Then, by Ito Isometry,
\begin{align*}
    &\hat{\mathbb{E}}(A_{1}^{2})=\hat{\mathbb{E}}\left(\int_{0}^{T} | \Lambda_{j}(v_{n}(h))-\Lambda_{j}(v(h)) |_{\mathcal{H}} | \phi |_{\mathcal{H}} dW_{j}(h)\right)^{2}\\
    &=\hat{\mathbb{E}} \left(\int_{0}^{T}   |\varphi_{j}|_{\mathcal{H}}^{2}\left(|v_{n}(h)|_{\mathcal{H}}+|v_{h}|_{\mathcal{H}}\right)^{2}\left\vert v_{n}(h)-v(h)\right\vert_{\mathcal{H}}^{2} | \phi |_{\mathcal{H}}^{2}  d(h)\right)\\
    &\leq \hat{\mathbb{E}} \left[\left(\int_{0}^{T}  |\varphi_{j}|_{\mathcal{H}}^{4}\left(|v_{n}(h)|_{\mathcal{H}}+|v_{h}|_{\mathcal{H}}\right)^{4}| \phi |_{\mathcal{H}}^{4}  dh\right)^{\frac{1}{2}} \cdot \left(\int_{0}^{T}  \left\vert v_{n}(h)-v(h)\right\vert_{\mathcal{H}}^{4} dh\right)^{\frac{1}{2}} \right]\\
    &\leq \hat{\mathbb{E}} \left[\left(\int_{0}^{T}  |\varphi_{j}|_{\mathcal{H}}^{4}\left(|v_{n}(h)|_{\mathcal{H}}+|v_{h}|_{\mathcal{H}}\right)^{4}| \phi |_{\mathcal{H}}^{4}  dh\right)^{\frac{1}{2}} \cdot |v_{n}(h)-v(h)|_{\mathcal{L}^4([0,T],\mathcal{H})}^2 \right].
\end{align*}
Using the continuity of embedding $\mathcal{C}([0,T], \mathcal{H}) \hookrightarrow {\mathcal{L}^4([0,T],\mathcal{H})}$, it follows that,
\begin{align*}
    &\hat{\mathbb{E}}(A_{1}^{2})=\hat{\mathbb{E}}\left(\int_{0}^{T} | \Lambda_{j}(v_{n}(h))-\Lambda_{j}(v(h)) |_{\mathcal{H}} | \phi |_{\mathcal{H}} dW_{j}(h)\right)^{2}\\
    &\leq \hat{\mathbb{E}} \left[\left(\int_{0}^{T}  |\varphi_{j}|_{\mathcal{H}}^{4}\left(|v_{n}(h)|_{\mathcal{H}}+|v_{h}|_{\mathcal{H}}\right)^{4}| \phi |_{\mathcal{H}}^{4}  dh\right)^{\frac{1}{2}} \cdot C^{2}|v_{n}(h)-v(h)|_{\mathcal{C}([0,T],\mathcal{H})}^2 \right]
\end{align*}
since $v_{n}(h) \rightarrow v(h)$ in $\mathcal{C}([0,T],\mathcal{H})$ so

\begin{align*}
    \lim_{n \rightarrow \infty } \hat{\mathbb{E}}(A_{1}^{2})=\hat{\mathbb{E}}\left(\int_{0}^{T} | \Lambda_{j}(v_{n}(h))-\Lambda_{j}(v(h)) |_{\mathcal{H}} | \phi |_{\mathcal{H}} dW_{j}(h)\right)^{2}=0.
\end{align*}

Now we see $\hat{\mathbb{E}}\left(A_{2}^{2}\right)$;

\begin{align*}
    \hat{\mathbb{E}}\left(A_{2}^{2}\right)&=\hat{\mathbb{E}}\left( \int_{0}^{T} c_{2} | \Lambda_{j}(v_{n}(h)) |_{\mathcal{H}} | Q_{n}(\phi)-\phi |_{\mathcal{H}} dW_{j}(h)\right)^{2}\\
    &=\hat{\mathbb{E}}\left( \int_{0}^{T} c_{2} | \Lambda_{j}(v_{n}(h)) |_{\mathcal{H}}^{2} | Q_{n}(\phi)-\phi |_{\mathcal{H}}^{2} dW_{j}(h)\right)\\
    &=\hat{\mathbb{E}}\left(A_{2}^{2}\right)=\hat{\mathbb{E}}\left(  | Q_{n}(\phi)-\phi |_{\mathcal{H}}^{2} \int_{0}^{T} c_{2} | \Lambda_{j}(v_{n}(h)) |_{\mathcal{H}}^{2} dW_{j}(h)\right)
\end{align*}
Since $ Q_{n}\phi \rightarrow \phi \text{ in } \mathcal{H} $ so $ |Q_{n}\phi-\phi|_{\mathcal{H}} \rightarrow 0$ giving 
\begin{align*}
    \lim_{n \rightarrow \infty} \hat{\mathbb{E}}\left(A_{2}^{2}\right)=\hat{\mathbb{E}}\left(  | Q_{n}(\phi)-\phi |_{\mathcal{H}}^{2} \int_{0}^{T} c_{2} | \Lambda_{j}(v_{n}(h)) |_{\mathcal{H}}^{2} dW_{j}(h)\right)=0
\end{align*}

Therefore,
\begin{align*}
    \hat{\mathbb{E}} \int_{s}^{t} \left \langle  \Lambda_{j}(v_{n}(h)) , Q_{n}(\phi) \right \rangle_{\mathcal{H}} dW_{j}(h) -\int_{s}^{t} \left \langle \Lambda_{j}(v(h)), Q(\phi) \right \rangle_{\mathcal{H}} dW_{j}(h) \leq \hat{\mathbb{E}}(A_{1}^{2})+\hat{\mathbb{E}}(A_{2}^{2}) \rightarrow 0, 
\end{align*}
as $ n \rightarrow \infty$. Thus, (5) holds, and theorem \eqref{convergence in Ehat} is proved.
\end{proof}

\begin{lemma}\label{mns1-mns2 lemma used for passing limit}
    For all $t_1,t_2 \in [0,T]$ with $t_2 \leq t_1$, for all $\phi$ in $ \mathcal{V}$ and for all $h$ in $\mathcal{C}([0,T], \mathcal{H})$. Then following holds.
    \begin{align}
        \lim_{n \rightarrow \infty }\hat{\mathbb{E}}\left[ \left \langle \hat{M}_{n}(t_1)-\hat{M}_{n}(t_2) , \phi_1 \right \rangle_{\mathcal{H}}h(v_{n|[0,t_2]}) \right]= \hat{\mathbb{E}}\left[ \left \langle \hat{M}(t_1)-\hat{M}(t_2) , \phi_1 \right \rangle_{\mathcal{H}}h(v_{|[0,t_2]}) \right]. \label{ms1-ms2 conv lemma}
    \end{align}
\end{lemma}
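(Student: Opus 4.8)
The plan is to pass to the limit term by term inside the expectation, leaning on the convergences already collected in Theorem \ref{convergence in Ehat} and controlling the interchange of limit and expectation by a uniform-integrability argument.

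The first step is to bring the test element $\phi_1$ into the range of $Q_n$. Since $v_n$ takes values in $F_n$ $\hat{\mathbb P}$-a.s.\ and the maps $\Delta$, $F$, $k_j$, $\Lambda_j$ leave $F_n$ invariant along the approximation (as recorded in Section 4, where $Q_nF(u_n)=F(u_n)$ and $Q_n\Lambda_j(u_n)=\Lambda_j(u_n)$), the increment $\hat M_n(t_1)-\hat M_n(t_2)$ lies in $F_n$. As $Q_n$ is the orthogonal projection onto $F_n$, this yields
\begin{align*}
\big\langle \hat M_n(t_1)-\hat M_n(t_2),\phi_1\big\rangle_{\mathcal H}
=\big\langle \hat M_n(t_1)-\hat M_n(t_2),Q_n\phi_1\big\rangle_{\mathcal H}.
\end{align*}
Expanding the right-hand side through \eqref{hatM_n of v_n} splits it into the five families of expressions handled in Theorem \ref{convergence in Ehat}: the boundary values $\langle v_n(t_i),Q_n\phi_1\rangle_{\mathcal H}$, the integrals of $\langle \Delta v_n,Q_n\phi_1\rangle_{\mathcal H}$, $\langle F(v_n),Q_n\phi_1\rangle_{\mathcal H}$ and $\langle k_j(v_n),Q_n\phi_1\rangle_{\mathcal H}$ over $[t_2,t_1]$, together with the stochastic integrals $\int_{t_2}^{t_1}\langle \Lambda_j(v_n),Q_n\phi_1\rangle_{\mathcal H}\,dW_j$. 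Parts (1)--(4) of that theorem give $\hat{\mathbb P}$-a.s.\ convergence of the first four families to the corresponding expressions built from $v$, while the proof of part (5) gives $L^2(\hat\Omega)$-convergence of the stochastic term.

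Next I would treat the functional factor. Since $h$ is a bounded continuous functional on $\mathcal C([0,t_2],\mathcal H)$ and, by \eqref{same law property for w and u}, $v_n\to v$ in $\mathcal C([0,T],\mathcal H)$ $\hat{\mathbb P}$-a.s., continuity of $h$ gives $h(v_{n|[0,t_2]})\to h(v_{|[0,t_2]})$ $\hat{\mathbb P}$-a.s., with $|h(v_{n|[0,t_2]})|\le\|h\|_\infty$. Writing the difference of the two sides of \eqref{ms1-ms2 conv lemma} as $\hat{\mathbb E}[(X_n-X)Y_n]+\hat{\mathbb E}[X(Y_n-Y)]$, with $X_n=\langle \hat M_n(t_1)-\hat M_n(t_2),\phi_1\rangle_{\mathcal H}$, $Y_n=h(v_{n|[0,t_2]})$ and $X,Y$ the limits, the second summand tends to $0$ by dominated convergence ($Y_n-Y\to0$ a.s.\ and is bounded, while $X\in L^1(\hat\Omega)$ by Lemma \ref{continuity of M hat}), so everything reduces to showing $\hat{\mathbb E}|X_n-X|\to0$.

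The main obstacle is exactly this last step. For the stochastic-integral contribution it is already settled, since the $L^2(\hat\Omega)$-convergence from part (5) of Theorem \ref{convergence in Ehat} implies $L^1$-convergence. For the four deterministic families, which converge only $\hat{\mathbb P}$-a.s., I would invoke Vitali's theorem: it suffices to verify that $\{X_n\}_n$ is uniformly integrable, and this follows from a uniform second-moment bound $\sup_n\hat{\mathbb E}|X_n|^2<\infty$. That bound is obtained by Cauchy--Schwarz on each integral term, using the a priori estimates \eqref{vn bound in V}, \eqref{vn bound in L2n} and \eqref{vn bound in Domain of A} (the same inputs as in Theorem \ref{set of measures tight thm}) together with the bounds \eqref{kj estimate} and \eqref{Bj of an estimate in H} on $|k_j(v_n)|_{\mathcal H}$ and $|\Lambda_j(v_n)|_{\mathcal H}$, exactly as in the tightness proof. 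With uniform integrability in hand, Vitali's theorem upgrades the $\hat{\mathbb P}$-a.s.\ convergence $X_n\to X$ to $\hat{\mathbb E}|X_n-X|\to0$, and combining this with the stochastic term and the functional factor delivers \eqref{ms1-ms2 conv lemma}.
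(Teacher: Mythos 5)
Your overall architecture coincides with the paper's: expand $\langle \hat M_n(t_1)-\hat M_n(t_2),\phi_1\rangle$ through \eqref{hatM_n of v_n} with $Q_n\phi_1$ as test element, obtain $\hat{\mathbb P}$-a.s.\ convergence from Theorem \ref{convergence in Ehat}, handle $h(v_{n|[0,t_2]})$ by continuity and uniform boundedness, and close the argument with Vitali's theorem after a uniform second-moment bound. The one place where you genuinely diverge is how that uniform bound is obtained, and that is where your argument has a gap. You propose to estimate $\sup_n\hat{\mathbb E}|X_n|^2$ term by term via Cauchy--Schwarz ``using the a priori estimates \eqref{vn bound in V}, \eqref{vn bound in L2n} and \eqref{vn bound in Domain of A}, exactly as in the tightness proof.'' But the tightness proof only ever bounds \emph{first} moments of the increments, and for the drift term $\int_{t_2}^{t_1}\langle F(v_n(\tau)),Q_n\phi_1\rangle_{\mathcal H}\,d\tau$ a second-moment bound requires controlling quantities such as $\hat{\mathbb E}\bigl[\bigl(\int_0^T\|v_n\|^2\,d\tau\bigr)^2\bigr]$ and, through $|F(v_n)|_{\mathcal H}\le H_1(\|v_n\|,0)\|v_n\|$ with $H_1$ of order $\|v_n\|^{2n}$, moments of $\|v_n\|_{\mathcal V}$ up to order $4n+2$. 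Theorem \ref{Important Estimates} supplies only $\hat{\mathbb E}\sup_t\|v_n(t)\|^2\le C_1$ and $\hat{\mathbb E}\sup_t|v_n(t)|^{2n}_{L^{2n}}\le C_2$, so the required higher moments are simply not available, and the claimed bound does not follow from the cited inputs.

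The paper sidesteps this entirely: since $\hat M_n$ was shown in Lemma \ref{Quadratic Variation} to be a square-integrable martingale with quadratic variation $\sum_{j}\int_0^t|\Lambda_j(v_n(\tau))|^2_{\mathcal H}\,d\tau$, a single application of the Burkholder inequality gives
\begin{align*}
\hat{\mathbb E}\Bigl[\sup_{t\in[0,T]}|\hat M_n(t)|^2_{\mathcal H}\Bigr]\le c\,\hat{\mathbb E}\Bigl(\sum_{j=1}^N\int_0^T|\Lambda_j(v_n(\tau))|^2_{\mathcal H}\,d\tau\Bigr)\le c\,N C_{12}^2 T,
\end{align*}
where the last step uses only the pathwise bound \eqref{Bj of an estimate in H}, itself a consequence of the sphere invariance $|v_n(t)|_{\mathcal H}=1$. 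This yields $\sup_n\hat{\mathbb E}|X_n|^2<\infty$ without ever estimating the second moments of the drift integrals. To repair your proof you should either adopt this route or first establish the higher moment estimates on $\|v_n\|_{\mathcal V}$ that your term-by-term bound implicitly assumes.
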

\begin{proof}
Let $t_1,t_2 \in [0,T]$ with $t_2 \leq t_1$ and $\phi \in \mathcal{V} $then using \eqref{hatM_n of v_n}  we get 
\begin{align}
    &\left \langle \hat{M}_{n}(t_1)-\hat{M}_{n}(t_2),\phi \right \rangle \nonumber \\ 
    &=\left \langle v_{n}(t_1)-v_{n}(t_2), Q_{n}\phi \right \rangle - \int_{t_2}^{t_1} \left \langle \Delta (v_{n}(\tau)),Q_{n} \phi \right \rangle_{\mathcal{H}} d\tau - \int_{t_2}^{t_1} \left \langle F(v_n(\tau)), Q_{n}\phi \right \rangle_{\mathcal{H}} d\tau \nonumber \\  &-\frac{1}{2} \sum_{1}^{N} \int_{t_2}^{t_1} \left \langle k_{j}(v_n(\tau)), Q_{n} \phi \right \rangle_{\mathcal{H}} d\tau - \sum_{j=1}^{N} \int_{t_2}^{t_1} \left \langle \Lambda_{j}(v_{n}(\tau)) , Q_{n}\phi \right \rangle d\tau \label{Ms1-Ms2 conv}
\end{align}
Applying Theorem \eqref{convergence in Ehat} in \eqref{Ms1-Ms2 conv} we have 
\begin{align} 
    \lim_{n \rightarrow \infty} \left \langle \hat{M}_{n}(t_1)-\hat{M}_{n}(t_2),\phi \right \rangle=\left \langle \hat{M}(t_1)-\hat{M}(t_2),\phi \right \rangle \; \hat{\mathbb{P}}-a.s.  \label{Mns1-Mns2 converges}
\end{align}
Now we will prove \eqref{ms1-ms2 conv lemma}. For this, we already have $v_{n} \rightarrow v$ in $\mathcal{Y}_{T}$ this way $v_{n} \rightarrow v$ in $\mathcal{C}([0,T],\mathcal{H})$, which yields, 
\begin{align}
    \lim_{n \rightarrow \infty } h(v_{n|[0,t_2]]})=h(v_{|[0,t_2]})\; \; \; \hat{\mathbb{P}}-a.s. \label{hvn converge to hv }
\end{align}
and 
\begin{align}
    \sup_{ n \in N } |h(v_{n|[0,t_2]})|_{\mathcal{L}^{\infty}} < \infty.  \nonumber
\end{align}
Let's set, 
\begin{align}
    \phi_{n}(\omega):= \left[  \left \langle \hat{M}_{n}(t_1,\omega)-\hat{M}_{n}(t_2,\omega) , \phi_1 \right \rangle_{\mathcal{H}}h(v_{n|[0,t_2]}) \right], \; w \in \hat{\Omega} \nonumber
\end{align}
we will show that 
\begin{align}
    \sup_{n \geq 1} \hat{\mathbb{E}} (|\phi_{n}|^2) < \infty \label{claim of phi n conv in E hat}
\end{align}
With the help of Cauchy-Schwarz inequality and $\mathcal{V} \hookrightarrow \mathcal{V}'$ $\forall n \in \mathbb{N}$ we have a positive constant $c$ such that 
\begin{align}
    \hat{\mathbb{E}}\left[ |\phi_{n}|^2 \right] \leq c |h|_{\mathcal{L}^\infty}^2 |\phi_1|_{\mathcal{V}}^{2} \hat{\mathbb{E}}\left[ |\hat{M}_{n}(t_1)|^{2}_{\mathcal{H}}+|\hat{M}_{n}(t_2)|^{2}_{\mathcal{H}}\right] \nonumber,  
\end{align}
because $\hat{M}_{n}$ is a continuous process and martingale having quadratic variation as described in Lemma \eqref{Quadratic Variation}, using Burkholder-Inequality we have 
\begin{align}\label{Mn hat in E hat }
\hat{\mathbb{E}}[\sup_{s \in [0,T]} |\hat{M}_{n}(t')|^{2}_{\mathcal{H}}] \leq c \hat{\mathbb{E}}\left( \sum_{j=1}^{N} \int_{0}^{T} |\Lambda_{j}(v_{n}(\tau))|^{2}_{\mathcal{H}} d\tau\right)    
\end{align}
equation \eqref{Mn hat in E hat } along with \eqref{Bj of an estimate in H} implies
\begin{align}
    &\hat{\mathbb{E}}[\sup_{s \in [0,T]} |\hat{M}_{n}(t')|^{2}_{\mathcal{H}}] \leq \hat{\mathbb{E}} \left( \sum_{j=1}^{N} \int_{0}^{T} C_{12}^{2} d\tau\right) \nonumber \\
    &\hat{\mathbb{E}}[\sup_{s \in [0,T]} |\hat{M}_{n}(t')|^{2}_{\mathcal{H}}] \leq \hat{\mathbb{E}}\left( C_{12}^{2}TN \right) =C_{12}^{2}TN < \infty. \label{sup Mn square norm in E hat}
\end{align}

Then, \eqref{Mn hat in E hat } together with \eqref{sup Mn square norm in E hat} allows us to conclude that \eqref{claim of phi n conv in E hat} holds. By using the fact of uniform integrability of the sequence $\phi_n$ is and employing \eqref{Mns1-Mns2 converges}, we infer that $\phi_n$ converges $\mathbb{P}-a.s.$. Thus, the application of the Vitali Theorem gives the required proof. 
\end{proof}
\begin{corollary}\label{corollary used in proving main theorem}
    For all $t_1,t_2 \in [0,T]$ such that $t_2 \leq t_1$ one has 
    \begin{align}
        \hat{\mathbb{E}}\left( \hat{M}(t_1)-\hat{M}(t_2)| \mathcal{F}_{t_1} \right) =0. \nonumber
    \end{align}
\end{corollary}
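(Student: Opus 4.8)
The plan is to obtain the identity by passing to the limit in the discrete-level martingale relation. By Lemma~\ref{Quadratic Variation}, specifically the identity \eqref{firt req for quad limit pass} (which holds because $u_n$ and $v_n$ have the same law and each $\hat{M}_n$ is a martingale), we have
\[
\hat{\mathbb{E}}\left[ \left \langle \hat{M}_{n}(t_1)-\hat{M}_{n}(t_2) , \phi \right \rangle_{\mathcal{H}}\, h(v_{n|[0,t_2]}) \right]=0
\]
for every $\phi \in \mathcal{V}$ and every bounded continuous $h$ on $\mathcal{C}([0,t_2],\mathcal{H})$. Letting $n \to \infty$ and invoking Lemma~\ref{mns1-mns2 lemma used for passing limit}, the left-hand side converges to the same expression with $\hat{M}$ and $v$ replacing $\hat{M}_n$ and $v_n$, so that
\[
\hat{\mathbb{E}}\left[ \left \langle \hat{M}(t_1)-\hat{M}(t_2) , \phi \right \rangle_{\mathcal{H}}\, h(v_{|[0,t_2]}) \right]=0.
\]

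Next I would read this identity as a statement about conditional expectation. The functionals $h(v_{|[0,t_2]})$, as $h$ ranges over bounded continuous functions on $\mathcal{C}([0,t_2],\mathcal{H})$, generate the $\sigma$-algebra $\mathcal{F}_{t_2}$ and, by a monotone class argument, are rich enough to test against every $\mathcal{F}_{t_2}$-measurable bounded random variable. Hence the vanishing of the expectation against all such $h$ is precisely the assertion that
\[
\hat{\mathbb{E}}\left( \left \langle \hat{M}(t_1)-\hat{M}(t_2) , \phi \right \rangle_{\mathcal{H}} \,|\, \mathcal{F}_{t_2} \right)=0, \quad \hat{\mathbb{P}}\text{-a.s.},
\]
for each fixed $\phi \in \mathcal{V}$.

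Finally, since $\hat{M}(t_1)-\hat{M}(t_2)$ takes values in $\mathcal{H}$ and is square-integrable there, a fact inherited from the uniform bound \eqref{sup Mn square norm in E hat} together with the continuity established in Lemma~\ref{continuity of M hat}, and since $\mathcal{V}$ is dense in $\mathcal{H}$, the scalar identity for all $\phi$ in a dense set upgrades to the vector-valued conclusion $\hat{\mathbb{E}}\left( \hat{M}(t_1)-\hat{M}(t_2) \,|\, \mathcal{F}_{t_2}\right)=0$. I expect the main obstacle to be the middle step: rigorously identifying the class of test functionals $h(v_{|[0,t_2]})$ with the conditioning $\sigma$-algebra, which requires pinning down $\mathcal{F}_{t_2}$ as the $\sigma$-algebra generated by the path of $v$ up to time $t_2$ and then a standard approximation argument. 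The subsequent density passage from scalar $\phi$ to the full $\mathcal{H}$-valued conditional expectation is then routine.
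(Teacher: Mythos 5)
Your argument is correct and is exactly the route the paper intends: the paper states this corollary without proof, but the surrounding text makes clear it is meant to follow by combining the discrete-level identity \eqref{firt req for quad limit pass} from Lemma \ref{Quadratic Variation} with the limit passage of Lemma \ref{mns1-mns2 lemma used for passing limit}, and then reading the resulting identity $\hat{\mathbb{E}}\bigl[\langle \hat{M}(t_1)-\hat{M}(t_2),\phi\rangle_{\mathcal{H}}\,h(v_{|[0,t_2]})\bigr]=0$ as the martingale property --- your write-up simply supplies the measure-theoretic details (the test functionals $h(v_{|[0,t_2]})$ generating $\mathcal{F}_{t_2}$, and the density of $\mathcal{V}$ in $\mathcal{H}$) that the paper leaves implicit. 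Note that your conclusion correctly conditions on $\mathcal{F}_{t_2}$; the paper's statement writes $\mathcal{F}_{t_1}$, which is evidently a typo since conditioning on the later time would make the assertion false in general.
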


\begin{lemma}\label{2nd lemma for passing limit}
    For all $t_1,t_2 \in [0,T]$ with $t_2 \leq t_1$ and $\forall \phi_1, \phi_2 \in \mathcal{V}$ we have 
    \begin{align}
        &\lim_{n \rightarrow \infty } \hat{\mathbb{E}} \left[  \left(\left \langle \hat{M}_{n}(t_1),\phi_1 \right \rangle \left \langle \hat{M}_{n}(t_1),\phi_2 \right \rangle-\left \langle \hat{M}_{n}(t_2),\phi_1 \right \rangle \left \langle \hat{M}_{n}(t_2),\phi_2 \right \rangle \right) h(v_{n|[0,t_2]})\right] \nonumber \\&=\hat{\mathbb{E}} \left[ \left(\left \langle \hat{M}(t_1),\phi_1 \right \rangle \left \langle \hat{M}(t_1),\phi_2 \right \rangle-\left \langle \hat{M}(t_2),\phi_1 \right \rangle \left \langle \hat{M}(t_2),\phi_2 \right \rangle \right) h(v_{n|[0,t_2]}) \right], \; \; h \in  \mathcal{C}([0,T],\mathcal{H}), \nonumber
    \end{align}
    where $\left \langle \cdot , \cdot \right \rangle$ represents duality product in the spaces $\mathcal{V}$ and $\mathcal{V}'$.
\end{lemma}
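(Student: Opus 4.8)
The plan is to follow verbatim the two-step scheme of Lemma~\ref{mns1-mns2 lemma used for passing limit}: first establish almost-sure convergence of the integrand, then prove uniform integrability and conclude via Vitali's theorem. The only structural difference is that the functional of $\hat{M}_{n}$ is now \emph{bilinear} rather than linear, so the moment bounds used there must be upgraded from second to fourth order.

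First I would obtain the pointwise limit. From \eqref{Mns1-Mns2 converges} we already know that $\left\langle \hat{M}_{n}(t),\phi\right\rangle \to \left\langle \hat{M}(t),\phi\right\rangle$ $\hat{\mathbb{P}}$-a.s.\ for each fixed $t\in[0,T]$ and $\phi\in\mathcal{V}$. Applying this at $t=t_1,t_2$ with $\phi=\phi_1,\phi_2$ and using that products of almost surely convergent sequences converge almost surely, the bracketed difference $\left\langle \hat{M}_{n}(t_1),\phi_1\right\rangle\left\langle \hat{M}_{n}(t_1),\phi_2\right\rangle-\left\langle \hat{M}_{n}(t_2),\phi_1\right\rangle\left\langle \hat{M}_{n}(t_2),\phi_2\right\rangle$ converges $\hat{\mathbb{P}}$-a.s.\ to its analogue with $\hat{M}$ in place of $\hat{M}_{n}$. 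Together with $h(v_{n|[0,t_2]})\to h(v_{|[0,t_2]})$ $\hat{\mathbb{P}}$-a.s.\ from \eqref{hvn converge to hv }, the entire integrand, which I denote $\psi_n$, converges $\hat{\mathbb{P}}$-a.s.

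Next I would establish uniform integrability by showing $\sup_{n}\hat{\mathbb{E}}[|\psi_n|^2]<\infty$, which suffices for Vitali. Since $h\in\mathcal{C}([0,T],\mathcal{H})$ is bounded and $|\langle \hat{M}_{n}(t),\phi_i\rangle|\le |\hat{M}_{n}(t)|_{\mathcal{H}}\,|\phi_i|_{\mathcal{H}}$, repeated use of Cauchy--Schwarz reduces the claim to a uniform bound on $\hat{\mathbb{E}}\big[\sup_{s\in[0,T]}|\hat{M}_{n}(s)|^{4}_{\mathcal{H}}\big]$. By Lemma~\ref{Quadratic Variation}, $\hat{M}_{n}$ is a continuous square-integrable martingale whose quadratic variation is $\sum_{j=1}^{N}\int_{0}^{\cdot}|\Lambda_{j}(v_{n}(t'))|^{2}\,dt'$, so the Burkholder--Davis--Gundy inequality yields
\[
\hat{\mathbb{E}}\Big[\sup_{s\in[0,T]}|\hat{M}_{n}(s)|^{4}_{\mathcal{H}}\Big]\le c\,\hat{\mathbb{E}}\Big[\Big(\sum_{j=1}^{N}\int_{0}^{T}|\Lambda_{j}(v_{n}(t'))|^{2}_{\mathcal{H}}\,dt'\Big)^{2}\Big].
\]
Inserting the $n$-independent estimate $|\Lambda_{j}(v_{n})|_{\mathcal{H}}\le C_{12}$ from \eqref{Bj of an estimate in H} bounds the right-hand side by $c\,(N C_{12}^{2}T)^{2}$, uniformly in $n$, giving $\sup_{n}\hat{\mathbb{E}}[|\psi_n|^2]<\infty$.

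Finally, with $\hat{\mathbb{P}}$-a.s.\ convergence of $\psi_n$ and the uniform $L^2$-bound (hence uniform integrability), Vitali's convergence theorem delivers $\hat{\mathbb{E}}[\psi_n]\to\hat{\mathbb{E}}[\psi]$, which is exactly the asserted identity (the right-hand side being evaluated at $h(v_{|[0,t_2]})$; the subscript $n$ printed there is a typographical slip). The main obstacle is precisely this fourth-moment step: whereas the linear functional in Lemma~\ref{mns1-mns2 lemma used for passing limit} needed only the second-moment bound \eqref{sup Mn square norm in E hat}, the bilinear expression forces control of $\hat{\mathbb{E}}\big[\sup_{s}|\hat{M}_{n}(s)|^{4}\big]$, and one must check that both the BDG constant and the squared quadratic-variation bound stay uniform in $n$ --- which they do thanks to the uniform estimate $C_{12}$.
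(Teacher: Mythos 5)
Your proposal is correct and follows essentially the same route as the paper's proof: almost-sure convergence of the integrand from \eqref{Mns1-Mns2 converges} and \eqref{hvn converge to hv }, a uniform $r$-th moment bound ($r>1$) obtained via the Burkholder inequality and the uniform estimate $|\Lambda_{j}(v_{n})|_{\mathcal{H}}\le C_{12}$, and then Vitali's theorem. The only cosmetic difference is that you fix $r=2$ (hence fourth moments of $\hat{M}_{n}$) where the paper keeps a generic $r>1$ (hence $2r$-th moments), which changes nothing of substance.
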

\begin{proof}
Let $t_1,t_2 \in [0,T]$ with $t_2 \leq t_1$ and $\forall \phi_1, \phi_2 \in \mathcal{V}$.\\
Set:\\ 
$\phi_{n}(\omega):= \left(\left \langle \hat{M}_{n}(t_1,\omega),\phi_1 \right \rangle \left \langle \hat{M}_{n}(t_1,\omega),\phi_2 \right \rangle-\left \langle \hat{M}_{n}(t_2,\omega),\phi_1 \right \rangle \left \langle \hat{M}_{n}(t_2,\omega),\phi_2 \right \rangle \right) h(v_{n|[0,t_2]}(\omega)) $,\\
$\phi(\omega):= \left(\left \langle \hat{M}(t_1,\omega),\phi_1 \right \rangle \left \langle \hat{M}(t_1,\omega),\phi_2 \right \rangle-\left \langle \hat{M}(t_2,\omega),\phi_1 \right \rangle \left \langle \hat{M}(t_2,\omega),\phi_2 \right \rangle \right) h(v_{|[0,t_2]}(\omega)), \;\; \omega \in \hat{\Omega}$

From \eqref{Mns1-Mns2 converges} and \eqref{hvn converge to hv } it is concluded that 
\begin{align}
    \lim_{n \rightarrow \infty } \phi_{n}(\omega)=\phi(\omega), \;\; \hat{\mathbb{P}}-a.s \nonumber
\end{align} 
We claim that there is $r>1$ we have 
\begin{align}
    \sup_{n \geq 1}\hat{\mathbb{E}}\left[ |\phi_{n}|^r \right] < \infty  \label{claim for phi n in Ehat}
\end{align}

We have the following inequality similar to the one we had in the previous lemma.

\begin{align}
    \hat{\mathbb{E}}\left[ |\phi_{n}|^r \right] \leq c |h|_{\mathcal{L}^{\infty}}^{r}|\phi_1|_{\mathcal{V}}^r|\phi_2|_{\mathcal{V}}^r \hat{\mathbb{E}}\left[ |\hat{M}_{n}(t_1)|^{2r}+|\hat{M}_{n}(t_2)|^{2r} \right]. \label{phi n power r in hat E}
\end{align}
Again, by Burkholder inequality Theorem, we have 
\begin{align}
    \hat{\mathbb{E}}\left[ \sup_{s \in [0,T]} |\hat{M}_{n}(t')|^{2r} \right] \leq c \hat{\mathbb{E}}\left[ \sum_{j=1}^{N} \int_{0}^{T} |\Lambda_{j}(v_{n}(\tau))|^2 d\tau \right]^r \label{ Mn 2r in E hat}
\end{align}
Also
\begin{align}
 \hat{\mathbb{E}}\left[ \sum_{j=1}^{N} \int_{0}^{T} |\Lambda_{j}(v_{n}(\tau))|^2 d\tau \right]^r \leq \hat{\mathbb{E}}\left( \sum_{j=1}^{N} C_{12}^2 T\right)^r=N^rC_{12}^{2r}T^r < \infty.  \label{Bjn in E hat}
\end{align}
From \eqref{phi n power r in hat E}, \eqref{ Mn 2r in E hat} and \eqref{Bjn in E hat} makes \eqref{claim for phi n in Ehat} true. The application of the Vitali Theorem  yields  
\begin{align}
    \lim_{n \rightarrow \infty } \hat{\mathbb{E}}(\phi_{n})=\hat{\mathbb{E}}(\phi). \nonumber
\end{align}
\end{proof}

\subsection{\textbf{Convergence of Quadratic Variations}}

\begin{lemma}\label{3rd for passing limit in quad variations}
    $\forall t_1,t_2 \in [0,T], \forall \phi_1, \phi_2 \in \mathcal{V} \text{ and } \forall h \in \mathcal{C}([0,T], \mathcal{H})$ we have 
    \begin{align}
        &\lim_{n \rightarrow \infty }\left[ \left( \sum_{J=1}^{N} \int_{t_2}^{t_1} \left \langle (\Lambda_{j}(v_{n}(\tau)))^{*}Q_{n}\phi_1, (\Lambda_{j}(v_{n}(\tau)))^{*}Q_{n}\phi_2 \right \rangle_{\mathbb{R}}d\tau \right) \cdot h_{v_{n|[0,t_2]}}\right]=\nonumber \\
        & \left[ \left( \sum_{J=1}^{N} \int_{t_2}^{t_1} \left \langle (\Lambda_{j}(v(\tau)))^{*}\phi_1, (\Lambda_{j}(v(\tau)))^{*}\phi_2 \right \rangle_{\mathbb{R}} d\tau \right) \cdot h_{v_{[0,t_2]}}\right]. \nonumber
    \end{align}
\end{lemma}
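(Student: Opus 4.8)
The plan is to follow exactly the scheme of Lemmas~\ref{mns1-mns2 lemma used for passing limit} and~\ref{2nd lemma for passing limit}: first establish $\hat{\mathbb{P}}$-almost sure convergence of the random variables appearing on the two sides, then a uniform $L^r$ bound ($r>1$) guaranteeing uniform integrability, and finally invoke the Vitali convergence theorem to pass the limit under the expectation (the outer brackets being read as $\hat{\mathbb{E}}[\,\cdot\,]$, as in the companion lemmas). The first step I would take is to rewrite each adjoint pairing as a genuine scalar product: since every $\Lambda_j(v_n(\tau))$ sends the one-dimensional noise into $\mathcal{H}$, its adjoint acts by pairing in $\mathcal{H}$, so that $\langle (\Lambda_j(v_n(\tau)))^{*}Q_n\phi_1,(\Lambda_j(v_n(\tau)))^{*}Q_n\phi_2\rangle_{\mathbb{R}}=\langle \Lambda_j(v_n(\tau)),Q_n\phi_1\rangle_{\mathcal{H}}\langle \Lambda_j(v_n(\tau)),Q_n\phi_2\rangle_{\mathcal{H}}$, and similarly for $v$ and $\phi_i$ without the projections.

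For the pointwise convergence I would telescope the difference of the two products by inserting and subtracting $\langle \Lambda_j(v_n(\tau)),\phi_1\rangle_{\mathcal{H}}\langle \Lambda_j(v_n(\tau)),\phi_2\rangle_{\mathcal{H}}$. This splits the error into one part controlled by $|Q_n\phi_i-\phi_i|_{\mathcal{H}}\to 0$ (established inside the proof of Theorem~\ref{convergence in Ehat}) and one part controlled by $|\Lambda_j(v_n(\tau))-\Lambda_j(v(\tau))|_{\mathcal{H}}$, which is handled by the Lipschitz estimate for $\Lambda_j$ recorded before part (5) of Theorem~\ref{convergence in Ehat} together with the convergence $v_n\to v$ in $\mathcal{C}([0,T],\mathcal{H})$ coming from \eqref{same law property for w and u}. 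Because $v_n(\tau),v(\tau)\in\mathcal{M}$ (via the equality of laws and Theorem~\ref{Invarience in Finite dimension}), each factor $|\Lambda_j(v_n(\tau))|_{\mathcal{H}}$ is bounded uniformly, so both parts tend to $0$; integrating over $[t_2,t_1]$ and multiplying by $h(v_{n|[0,t_2]})$, which converges to $h(v_{|[0,t_2]})$ by \eqref{hvn converge to hv }, gives the $\hat{\mathbb{P}}$-a.s. convergence of the full expression.

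For the uniform integrability I would exploit the very same bounds: $|\Lambda_j(v_n(\tau))|_{\mathcal{H}}\le C_{12}$ from \eqref{Bj of an estimate in H} and $|Q_n\phi_i|_{\mathcal{H}}\le|\phi_i|_{\mathcal{H}}$ show that the integrand is dominated by the deterministic constant $C_{12}^{2}|\phi_1|_{\mathcal{H}}|\phi_2|_{\mathcal{H}}$; hence, after integration and multiplication by $h$ (bounded in $L^\infty$), the whole quantity is bounded by a fixed constant and $\sup_n\hat{\mathbb{E}}[|\,\cdot\,|^r]<\infty$ for every $r>1$. The Vitali theorem then upgrades the almost sure convergence to convergence of the expectations, which is the assertion. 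The only step demanding real care is the uniform control of $|\Lambda_j(v_n(\tau))|_{\mathcal{H}}$: it rests on the invariance of the manifold $\mathcal{M}$ under the finite-dimensional flow, so that the nonlinear term $\langle\varphi_j,v_n\rangle v_n$ cannot grow; once that is secured, the rest is a routine combination of Cauchy--Schwarz and the convergences already proved, requiring no estimate beyond those of Theorem~\ref{Important Estimates} and \eqref{Bj of an estimate in H}.
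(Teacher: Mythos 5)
Your proposal is correct and follows essentially the same route as the paper: pointwise $\hat{\mathbb{P}}$-a.s. convergence via a telescoping split into a $|Q_n\phi_i-\phi_i|_{\mathcal{H}}\to 0$ piece and a Lipschitz-of-$\Lambda_j$ piece, a uniform $L^r$ bound from the uniform bound on $|\Lambda_j(v_n(\tau))|_{\mathcal{H}}$, and then Vitali. The only (harmless) deviation is that you justify the uniform bound on $|\Lambda_j(v_n)|_{\mathcal{H}}$ by the invariance $v_n(t)\in\mathcal{M}$, which gives a clean pathwise bound $2|\varphi_j|_{\mathcal{H}}$, whereas the paper invokes the constant $C_{12}$ from \eqref{Bj of an estimate in H} derived from the energy estimates.
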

\begin{proof}
Let $\phi_1,\phi_2 \in \mathcal{V}$, we define
\begin{align}
    \phi_{n}= \left( \sum_{J=1}^{N} \int_{t_2}^{t_1} \left \langle (\Lambda_{j}(v_{n}(\tau,\omega)))^{*}Q_{n}\phi_1, (\Lambda_{j}(v_{n}(\tau,\omega)))^{*}Q_{n}\phi_2 \right \rangle_{\mathbb{R}} d\tau \right) \cdot h_{v_{n|[0,t_2]}} \nonumber
\end{align}

We claim that $\phi_n$ is uniformly integrable which converges $\mathbb{P}-a.s.$, to some $\phi$. To do so, it is enough to show that $r>1$ such that,
\begin{align}
    \sup_{n \geq 1} \hat{\mathbb{E}}|\phi_n|^r < \infty.  \label{claim for phi n powe r in hatE}
\end{align}
With the help of Cauchy Schwartz inequality 
\begin{align}
|(\Lambda_{j}(v_{n}(\tau,\omega))^{*}Q_{n}\phi_1|_{R} \leq |(\Lambda_{j}(v_{n}(\tau,\omega))^{*}| |Q_{n}\phi_1|_{R}  \leq  |\Lambda_{j}(v_{n}(\tau,\omega)| |\phi_1|_{\mathcal{H}}  \leq C_{12}|\phi_1|_{\mathcal{H}}. \nonumber
\end{align}
With the application of Holder's inequality
\begin{align}
    &\hat{\mathbb{E}}|\phi_n|^{r} =\hat{\mathbb{E}} \left\vert \left( \sum_{J=1}^{N} \int_{t_2}^{t_1} \left \langle (\Lambda_{j}(v_{n}(\tau)))^{*}Q_{n}\phi_1, (\Lambda_{j}(v_{n}(\tau)))^{*}Q_{n}\phi_2 \right \rangle_{\mathbb{R}} d\tau \right) \cdot h_{v_{n|[0,t_2]}}\right \vert^r\nonumber\\
    &\leq |h|^{r}_{\mathcal{L}^\infty} \hat{\mathbb{E}}\left(  \sum_{j=1}^{N} \int_{0}^{T} \left\vert (\Lambda_{j}(v_{n}(\tau)))^{*}Q_{n}\phi_1\right\vert \cdot \left\vert (\Lambda_{j}(v_{n}(\tau)))^{*}Q_{n}\phi_1\right\vert d\tau \right) \nonumber \\
    &\leq (NC^2_{12})^{r} |h|^{r}_{\mathcal{L}^\infty} |\phi_1|^r_{\mathcal{H}} |\phi_2|^r_{\mathcal{H}} T^r \nonumber
\end{align}
Hence, our claim is valid, and \eqref{claim for phi n powe r in hatE} holds consequently $\phi_n$ is uniformly integrable, which converges $\mathbb{P}-a.s.$, to some $\phi$.

Next, we will show the point-wise convergence. Let $\omega \in \hat{\Omega}$
\begin{align}
    &\lim_{n \rightarrow \infty }   \int_{t_2}^{t_1}  \sum_{J=1}^{N} \left \langle (\Lambda_{j}(v_{n}(\tau)))^{*}Q_{n}\phi_1, (\Lambda_{j}(v_{n}(\tau)))^{*}Q_{n}\phi_2 \right \rangle_{\mathbb{R}}d\tau \nonumber \\ &=
     \int_{t_2}^{t_1} \sum_{J=1}^{N} \left \langle (\Lambda_{j}(v(\tau)))^{*}\phi_1, (\Lambda_{j}(v(\tau)))^{*}\phi_2 \right \rangle_{\mathbb{R}} d\tau \label{pintwise conv of Quad }
\end{align}
fix $\omega \in \hat{\Omega}$
\begin{enumerate}
    \item $v_{n}(\cdot,\omega) \rightarrow v(\cdot,\omega)$
    \item and the $v_{n}(\cdot,\omega)_{n\geq 1}$ is uniformly bounded in $\mathcal{L}^2([0,T],\mathcal{V})$.
\end{enumerate}
To prove \eqref{pintwise conv of Quad }, it is enough to show that 
\begin{align}
    (\Lambda_{j}(v_{n}(\tau,\omega)))^{*}Q_{n}\phi_1 \longrightarrow (\Lambda_{j}(v(\tau,\omega)))^{*}\phi_1 \nonumber 
\end{align}
in $\mathcal{L}^2([t_2,t_1],\mathbb{R})$.
With the help of Cauchy-Schwarz inequality, one can achieve

\begin{align}
& \int_{t_2}^{t_1} \left \vert (\Lambda_{j}(v_{n}(\tau,\omega)))^{*}Q_{n}\phi_1 - (\Lambda_{j}(v(\tau,\omega)))^{*}\phi_1  \right \vert^2_{\mathbb{R}} d\tau \nonumber \\
&=\int_{t_2}^{t_1} \left \vert (\Lambda_{j}(v_{n}(\tau,\omega)))^{*}Q_{n}\phi_1 - (\Lambda_{j}(v_{n}(\tau,\omega)))^{*}\phi_1+ (\Lambda_{j}(v_{n}(\tau,\omega)))^{*}\phi_1- (\Lambda_{j}(v(\tau,\omega)))^{*}\phi_1  \right \vert^2_{\mathbb{R}} d\tau \nonumber\\
&=\int_{t_2}^{t_1} \left \vert (\Lambda_{j}(v_{n}(\tau,\omega)))^{*} \left(Q_{n}\phi_1 -\phi_1 \right)+ \left((\Lambda_{j}(v_{n}(\tau,\omega)))- (\Lambda_{j}(v(\tau,\omega))) \right)^{*}\phi_1 \right \vert^2_{\mathbb{R}} d\tau \nonumber\\
&\leq\int_{t_2}^{t_1} \left[  \vert (\Lambda_{j}(v_{n}(\tau,\omega)))^{*} \left(Q_{n}\phi_1 -\phi_1 \right) \vert_{\mathbb{R}} +\vert \left((\Lambda_{j}(v_{n}(\tau,\omega)))- (\Lambda_{j}(v(\tau,\omega))) \right)^{*}\phi_1  \vert_{\mathbb{R}} \right ]^2d\tau \nonumber\\
&\leq 2 \int_{t_2}^{t_1}  \vert \Lambda_{j}(v_{n}(\tau,\omega))\vert^2 \vert Q_{n}\phi_1 -\phi_1\vert^2_{\mathcal{H}} d\tau + 2 \int_{t_2}^{t_1} \vert (\Lambda_{j}(v_{n}(\tau,\omega)))- (\Lambda_{j}(v(\tau,\omega))\vert^2 \vert \phi_1  \vert^2_{\mathcal{H}} d\tau
\end{align}
Set
\begin{align}
&P^1_n=\int_{t_2}^{t_1}  \vert \Lambda_{j}(v_{n}(\tau,\omega))\vert^2 \vert Q_{n}\phi_1 -\phi_1\vert^2_{\mathcal{H}} d\tau \nonumber \\
&P^2_n=  \int_{t_2}^{t_1} \vert (\Lambda_{j}(v_{n}(\tau,\omega)))- (\Lambda_{j}(v(\tau,\omega))\vert^2 \vert \phi_1  \vert^2_{\mathcal{H}} d\tau. \nonumber
\end{align}
Since
\begin{align}
 \lim_{n \rightarrow\omega \infty } |Q_n \phi_1 - \phi_1|_{\mathcal{H}}=0 \label{ first for Pn1}  
\end{align}
and 
\begin{align}
    |B_j(v_n)| \leq C_{12}. \label{2nd for Pn1}
\end{align}
From \eqref{ first for Pn1} and \eqref{2nd for Pn1} it follows that,
\begin{align}
    \lim_{n \rightarrow \infty} P^1_{n}=0. \nonumber
\end{align}
Next from \cite{brzezniak2020global},
\begin{align}
    |\Lambda_{j}(v_n)-\Lambda_{j}(v)|\leq ||f_j||\left(||v_n||+||v||\right)||v_n-v||.\nonumber
\end{align}
Hence, it follows that,
\begin{align}
    &\int_{t_2}^{t_1} \vert (\Lambda_{j}(v_{n}(\tau,\omega)))- (\Lambda_{j}(v(\tau,\omega))\vert^2 \vert \phi_1  \vert^2_{\mathcal{H}} d\tau \nonumber\\
    &\leq |\phi_1|_{\mathcal{H}} \int_{t_2}^{t_1}||f_i||\left(||v_n(\tau,\omega)||+||v(\tau,\omega)||\right) ||v_n(\tau,\omega)-v(\tau,\omega)|| d\tau. \nonumber 
\end{align}
Using \eqref{un in V} and $v(n)(\cdot,\omega) \rightarrow v(\cdot,\omega)$ we have
\begin{align}
    \int_{t_2}^{t_1} \vert (\Lambda_{j}(v_{n}(\tau,\omega)))- (\Lambda_{j}(v(\tau,\omega))\vert^2 \vert \phi_1  \vert^2_{\mathcal{H}} d\tau \rightarrow 0. \nonumber
\end{align}
or 
\begin{align}
   \lim_{n \rightarrow \infty } P^2_{n}=0. \nonumber
\end{align}
This finishes the proof of the lemma.
\end{proof}
By applying lemma \eqref{mns1-mns2 lemma used for passing limit}, one can easily pass the limit in \eqref{firt req for quad limit pass}.\\
By applying lemma \eqref{2nd lemma for passing limit} and lemma \eqref{3rd for passing limit in quad variations}, one can pass limit to \eqref{2nd req for quad limit pass}. \\After passing limits, we conclude that  
\begin{align}
    \hat{\mathbb{E}}\left[ \left \langle \hat{M}(t_1)-\hat{M}(t_2) , \phi_1 \right \rangle_{\mathcal{H}}h(v_{|[0,t_2]}) \right]=0. \nonumber
\end{align}
And
\begin{align}
        &\hat{\mathbb{E}} \left[ \left( \left \langle \hat{M}(t_1), \phi_1 \right \rangle_{\mathcal{H}}\left \langle \hat{M}(t_1), \phi_2 \right \rangle_{\mathcal{H}}- \left \langle \hat{M}(t_2), \phi_1 \right \rangle_{\mathcal{H}}\left \langle \hat{M}(t_2), \phi_2 \right \rangle_{\mathcal{H}}\right)h(v_{|[0,t_2]})\right] \nonumber \\ &-\hat{\mathbb{E}}\left[\left( \sum_{j=1}^{m} \int_{t_2}^{t_1} \left \langle \Lambda_{j}(v(\tau)) \phi_1, \Lambda_{j}(v_{n}(\tau)) \phi_2 \right \rangle_{\mathcal{H}} d\tau \right) h(v_{|[0,t_2]}) \right]=0. \nonumber
\end{align}
As a result, we have the following immediate corollary.
\begin{corollary}
    For all $t$ in $ [0,T]$,
    \begin{align} 
        \left \langle\left \langle \hat{M} \right \rangle \right \rangle_{t}=\int_{0}^{t} \sum_{j=1}^{N} |\Lambda_{j}(v(t'))|_{\mathcal{H}}^2dt'. \nonumber
    \end{align}
\end{corollary}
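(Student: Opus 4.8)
The plan is to read off the scalar quadratic variation directly from the two identities displayed immediately above, which were obtained by passing to the limit in \eqref{firt req for quad limit pass} and \eqref{2nd req for quad limit pass} via Lemmas \ref{mns1-mns2 lemma used for passing limit}, \ref{2nd lemma for passing limit} and \ref{3rd for passing limit in quad variations}. Recall that for a continuous, square-integrable, $\mathcal{H}$-valued martingale, the scalar quadratic variation $\langle\langle \hat{M}\rangle\rangle_t$ is the unique increasing process for which $|\hat{M}(t)|_{\mathcal{H}}^2-\langle\langle\hat{M}\rangle\rangle_t$ is a martingale; the strategy is therefore to manufacture precisely such a decomposition from the bilinear identity and then invoke uniqueness.

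First I would record that $\hat{M}$ is a genuine $\mathcal{H}$-valued martingale. The first limit identity states that $\hat{\mathbb{E}}[\langle \hat{M}(t_1)-\hat{M}(t_2),\phi_1\rangle_{\mathcal{H}}\,h(v_{|[0,t_2]})]=0$ for every $\phi_1\in\mathcal{V}$ and every bounded continuous $h$; together with the square-integrability transferred from \eqref{sup Mn square norm in E hat} and the continuity established in Lemma \ref{continuity of M hat}, this is essentially Corollary \ref{corollary used in proving main theorem} and identifies each $\langle\hat{M}(\cdot),\phi_1\rangle_{\mathcal{H}}$ as a real martingale.

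The core step uses the second limit identity, which asserts that for all $\phi_1,\phi_2\in\mathcal{V}$ the process
\begin{align}
\langle \hat{M}(t),\phi_1\rangle_{\mathcal{H}}\langle \hat{M}(t),\phi_2\rangle_{\mathcal{H}}-\sum_{j=1}^{N}\int_{0}^{t}\langle \Lambda_j(v(\tau)),\phi_1\rangle_{\mathcal{H}}\langle \Lambda_j(v(\tau)),\phi_2\rangle_{\mathcal{H}}\,d\tau \nonumber
\end{align}
is a martingale, where I have written the duality product $\langle \Lambda_j(v)\phi_1,\Lambda_j(v)\phi_2\rangle$ in the equivalent scalar form $\langle \Lambda_j(v),\phi_1\rangle_{\mathcal{H}}\langle \Lambda_j(v),\phi_2\rangle_{\mathcal{H}}$ coming from the adjoint interpretation used in Lemma \ref{3rd for passing limit in quad variations}. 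I would then specialise to $\phi_1=\phi_2=e_k$, using the eigenbasis $\{e_k\}$ of $\Delta$ (which lies in $D(A)\subset\mathcal{V}$, so the test-function constraint is met), and sum over $k$. By Parseval's identity the first term becomes $\sum_k\langle\hat{M}(t),e_k\rangle_{\mathcal{H}}^2=|\hat{M}(t)|_{\mathcal{H}}^2$, while the second becomes $\sum_{j=1}^N\int_0^t\sum_k\langle\Lambda_j(v(\tau)),e_k\rangle_{\mathcal{H}}^2\,d\tau=\sum_{j=1}^N\int_0^t|\Lambda_j(v(\tau))|_{\mathcal{H}}^2\,d\tau$. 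Hence $|\hat{M}(t)|_{\mathcal{H}}^2-\sum_{j=1}^N\int_0^t|\Lambda_j(v(\tau))|_{\mathcal{H}}^2\,d\tau$ is a martingale, and uniqueness of the Doob--Meyer decomposition yields the claimed formula for $\langle\langle\hat{M}\rangle\rangle_t$.

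The main obstacle is justifying the interchange of the infinite sum over $k$ with the expectation defining the martingale property. Here I would invoke the uniform bound $|\Lambda_j(v)|_{\mathcal{H}}\le C_{12}$ from \eqref{Bj of an estimate in H}, which forces $\sum_k\langle\Lambda_j(v),e_k\rangle_{\mathcal{H}}^2$ to converge to $|\Lambda_j(v)|_{\mathcal{H}}^2$ with the integrable dominating constant $N C_{12}^2$, together with the square-integrability of $\hat{M}$, which makes the partial sums $\sum_{k\le K}\langle\hat{M}(t),e_k\rangle_{\mathcal{H}}^2$ increase to $|\hat{M}(t)|_{\mathcal{H}}^2$ in $L^1$. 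Monotone and dominated convergence then let me pass the partial-sum martingale identities to the limit $K\to\infty$, after which the corollary is immediate.
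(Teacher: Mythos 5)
Your proposal is correct and takes essentially the same route as the paper: the paper states this corollary as an ``immediate'' consequence of the two limit identities displayed just before it (obtained from Lemmas \ref{mns1-mns2 lemma used for passing limit}, \ref{2nd lemma for passing limit} and \ref{3rd for passing limit in quad variations}) and offers no further argument, while your write-up simply makes that step explicit by testing against the eigenbasis $\{e_k\}\subset D(A)\subset\mathcal{V}$, summing via Parseval, and invoking uniqueness of the Doob--Meyer decomposition, with the interchange of sum and expectation justified by the bound $|\Lambda_j(v)|_{\mathcal{H}}\le C_{12}$ and the square-integrability of $\hat{M}$. You in fact supply more justification than the paper does.
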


\section{Existence of Martingale Solution}
Finally, we present the proof of our targeted result stated in  theorem \eqref{Martingale theorem},\\
\begin{theorem} \label{Martingale theorem}
    There exists Martingales solution to problem \eqref{main prob}.
\end{theorem}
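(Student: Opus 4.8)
The plan is to assemble the ingredients prepared in the previous sections and then invoke the Martingale Representation Theorem \eqref{Martingale Repr THeorem} to reconstruct a driving noise for the limit process $v$ obtained via the Skorokhod construction \eqref{same law property for w and u}. On the new probability space $(\hat{\Omega},\hat{\mathcal{F}},\hat{\mathbb{P}})$ the bounds \eqref{v finite in E} and \eqref{fIniteness of v in V} already guarantee that $v$ has $\hat{\mathbb{P}}$-a.s. paths in $\mathcal{C}([0,T],\mathcal{V}_{w})\cap \mathcal{L}^{2}([0,T],D(A))$, and since $v_{n}\to v$ in $\mathcal{C}([0,T],\mathcal{H})$ with each $v_{n}(t)\in\mathcal{M}$, the limit satisfies $v(t)\in\mathcal{M}$ as well. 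These are precisely the path regularity and constraint properties demanded of a martingale solution in the Definition, so the only remaining task is to produce the Wiener process and the weak identity.

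First I would record that the limit process $\hat{M}$ of Lemma \eqref{continuity of M hat} is a continuous, square-integrable $\mathcal{H}$-valued martingale. Continuity is exactly the content of Lemma \eqref{continuity of M hat}; the martingale property is Corollary \eqref{corollary used in proving main theorem}, which gives $\hat{\mathbb{E}}\bigl(\hat{M}(t_{1})-\hat{M}(t_{2})\mid \mathcal{F}_{t_{1}}\bigr)=0$ for $t_{2}\leq t_{1}$; and square-integrability follows from the uniform bound \eqref{sup Mn square norm in E hat} together with the $\hat{\mathbb{P}}$-a.s. convergence $\hat{M}_{n}\to\hat{M}$. I would then invoke the final Corollary of the quadratic-variation section, which identifies
\[
\langle\langle \hat{M}\rangle\rangle_{t}=\int_{0}^{t}\sum_{j=1}^{N}|\Lambda_{j}(v(t'))|_{\mathcal{H}}^{2}\,dt'.
\]

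The decisive step is to apply Theorem \eqref{Martingale Repr THeorem}. Matching the displayed quadratic variation with its hypothesis, I would take the predictable diffusion coefficient to be $f(v)=(\Lambda_{1}(v),\dots,\Lambda_{N}(v))$ and $\mathbb{Q}$ the identity covariance on $\mathbb{R}^{N}$, so that $\langle\langle \hat{M}\rangle\rangle_{t}=\int_{0}^{t}(f(v)\mathbb{Q}^{1/2})(f(v)\mathbb{Q}^{1/2})^{*}\,dt'$. The theorem then furnishes an enlarged stochastic basis, a filtration, and an $\mathbb{R}^{N}$-valued Wiener process $\hat{W}=(\hat{W}_{1},\dots,\hat{W}_{N})$ adapted to it, such that $\hat{M}(t)=\sum_{j=1}^{N}\int_{0}^{t}\Lambda_{j}(v(t'))\,d\hat{W}_{j}(t')$. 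Reading this representation together with the definition of $\hat{M}$ as the drift-reduced part of $v$, and testing against an arbitrary $\phi\in\mathcal{V}$, yields for all $t\in[0,T]$
\[
\langle v(t),\phi\rangle-\langle v(0),\phi\rangle=\int_{0}^{t}\Bigl\langle \Delta v+F(v)+\tfrac{1}{2}\sum_{j=1}^{N}k_{j}(v),\,\phi\Bigr\rangle\,dt'+\sum_{j=1}^{N}\int_{0}^{t}\langle \Lambda_{j}(v),\phi\rangle\,d\hat{W}_{j},
\]
$\hat{\mathbb{P}}$-a.s., which is exactly the weak formulation in the Definition of a martingale solution. Hence the sextuple consisting of the reconstructed basis, $\hat{W}$, and $v$ is a martingale solution of \eqref{main prob}.

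I expect the main obstacle to be the clean application of the Martingale Representation Theorem: one must check that $\hat{M}$ genuinely meets all its hypotheses—square-integrability, continuity, and the precise tensorial form of the quadratic variation—and, more delicately, that the reconstructed $\hat{W}$ is a bona fide Wiener process with respect to a filtration to which $v$ remains progressively measurable, so that every stochastic integral $\int_{0}^{t}\langle \Lambda_{j}(v),\phi\rangle\,d\hat{W}_{j}$ is well defined and the closing identity holds in the stated weak sense. Everything analytic upstream (the uniform estimates of Theorem \eqref{Important Estimates}, tightness in Theorem \eqref{set of measures tight thm}, and the limit-passage Lemmas \eqref{mns1-mns2 lemma used for passing limit}, \eqref{2nd lemma for passing limit} and \eqref{3rd for passing limit in quad variations}) has already been discharged, so the remainder is a bookkeeping assembly around this single representation step.
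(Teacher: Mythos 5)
Your proposal is correct and follows essentially the same route as the paper: establish that $\hat{M}$ is a continuous square-integrable $\mathcal{H}$-valued martingale via Lemma \ref{continuity of M hat} and Corollary \ref{corollary used in proving main theorem}, identify its quadratic variation as $\int_{0}^{t}\sum_{j=1}^{N}|\Lambda_{j}(v(t'))|_{\mathcal{H}}^{2}\,dt'$, and then invoke the Martingale Representation Theorem \ref{Martingale Repr THeorem} to reconstruct the Wiener process and read off the weak identity. You in fact supply slightly more detail than the paper (the explicit choice of $f$ and $\mathbb{Q}$, the square-integrability argument, and the path-regularity and $\mathcal{M}$-constraint checks), but the decomposition and key lemmas are identical.
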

\begin{proof}

We will follow the same procedure as Da Prato and Zabczyk in \cite{da2014stochastic} and Gaurav Dhariwal in \cite{dhariwal2017study}.
From lemma \eqref{continuity of M hat} and corollary \eqref{corollary used in proving main theorem}, we conclude that we have $\hat{M}(t)$, $t$ in $[0, T]$ which is $\mathcal{H}$-valued continuous square integrable martingale w.r.t filtration $(\mathcal{F}_t)$. Further, it has quadratic variation as given below
\begin{align*}
        \left \langle\left \langle M \right \rangle \right \rangle_{t}=\int_{0}^{t} \sum_{j=1}^{N} |\Lambda_{j}(v_{n}(t'))|_{\mathcal{H}}^2dt'.   
\end{align*}
Therefore, applying Martingale representation theorem, see \eqref{Martingale Repr THeorem}, we can find 
\begin{enumerate}
    \item A stochastic basis $(\hat{\hat{\Omega}},\hat{\hat{\mathcal{F}}},\hat{\hat{\mathcal{F}}}_t,\hat{\hat{\mathbb{P}}})$
    \item A $\mathbb{R}^N-$ valued, $\hat{\hat{\mathcal{F}}}$-Wiener process $\hat{\hat{W}}(t)$.
    \item A progressively measurable process $\hat{\hat{u}}$ such that for all $t$ in $[0,T]$ and $\omega$ in $ \mathcal{V}$ that satisfies,
    \begin{align*}
        &\left \langle \hat{\hat{u}}(t),\omega \right \rangle-\left \langle \hat{\hat{u}}(0),\omega \right \rangle \nonumber \\ &=\int_{0}^{t} \left \langle \Delta \hat{\hat{u}}(t'),\omega \right \rangle dt'+\int_{0}^{t}\left \langle F(\hat{\hat{u}}(t'),\omega \right \rangle dt'+\frac{1}{2} \sum_{j=1}^{N} \int_{0}^{t} \left \langle k_{j}(\hat{\hat{u}}(t')), w \right \rangle dt'+ \sum_{j=1}^{N} \int_{0}^{t} \left \langle \Lambda_{j}(\hat{\hat{u}}(t')), w \right \rangle d\hat{\hat{W}}(t').
    \end{align*}
\end{enumerate}
Thus, we are in the assumption of the definition of the existence of martingale solution to \eqref{main prob}, and we have martingale solution to \eqref{main prob}.
\end{proof}



\end{document}